\numberwithin{equation}{section}
\newtheorem{theorem}{Theorem}[section]
\newtheorem{proposition}[theorem]{Proposition}
\newtheorem{lemma}[theorem]{Lemma}
\newtheorem{conjecture}[theorem]{Conjecture}
\theoremstyle{definition}
\newtheorem{definition}[theorem]{Definition}
\theoremstyle{remark}
\newcommand{\bthm}{\begin{theorem}}
\newcommand{\ethm}{\end{theorem}}
\newcommand{\bdfn}{\begin{definition} \em}
\newcommand{\edfn}{\end{definition}}
\newcommand{\bprop}{\begin{proposition}}
\newcommand{\eprop}{\end{proposition}}
\newcommand{\tnorm}{\textnormal}
\newcommand{\R}{\mathbb{R}}
\newcommand{\N}{\mathbb{N}}
\newcommand{\Z}{\mathbb{Z}}
\newcommand{\beq}{\begin{equation}}
\newcommand{\eeq}{\end{equation}}
\newcommand{\lsm}{\lesssim}
\newcommand{\eps}{\varepsilon}
\newcommand{\scriptJ}{\mathcal{J}}
\newcommand{\scriptK}{\mathcal{K}}
\newcommand{\qtq}[1]{\quad\text{#1}\quad}
\DeclareMathOperator*{\interior}{int}
\newcounter{smalllist}
\begin{document}

\title[The X-ray transform restricted to polynomial curves]{Uniform estimates for the X-ray transform restricted to polynomial curves}
\author{Spyridon Dendrinos}
\address{Department of Mathematics and Statistics, University of Jyv\"{a}skyl\"{a}, P.O. Box 35 (MaD), 40014 Jyv\"{a}skyl\"{a}, Finland.} \email{spyridon.dendrinos@jyu.fi}
\author{Betsy Stovall}
\address{Mathematics Department\\ University of California\\ Los Angeles, CA 90095-1555}
\email{betsy@math.ucla.edu}

\begin{abstract}
We establish near-optimal mixed-norm estimates for the X-ray transform restricted to polynomial curves with a weight that is a power of the affine arclength.  The bounds that we establish depend only on the spatial dimension and the degree of the polynomial.  Some of our results are new even in the well-curved case.  
\end{abstract}

\maketitle


\section{Introduction} \label{S:Introduction}


The X-ray transform, which we denote by $X_{\rm{full}}$, is a linear operator mapping functions on $\R^d$ to functions on the set $\mathcal{G}$ of all lines in $\R^d$ via
$$
X_{\rm{full}}f(l) = \int_l f, 
$$
where the integral is taken with respect to Lebesgue measure.  As $\mathcal G$ is of dimension $2(d-1)$, this operator is overdetermined whenever $d \geq 3$; this motivates the consideration of the restriction of $X_{\rm{full}}$ to the set of lines whose directions are parametrized by a fixed curve $\gamma:\R \to \R^{d-1}$.  The resulting restricted X-ray transform, after reparametrizing, maps functions on $\R^d$ to functions on $\R^d$ via
$$
X^\gamma f(t,y) = \int_\R f(s,y+s\gamma(t))\, ds.
$$
Because it is natural to bound $X_{\rm{full}}$ in mixed norm spaces (indeed, the conjectured mixed-norm bounds for $X_{\rm{full}}$ are known to imply the Kakeya conjecture--\cite{WolffBilin}), we seek mixed norm estimates for $X^\gamma$ of the form $\|X^\gamma f\|_{L^q(L^r)} \lesssim \|f\|_{L^p}$, where $L^q(L^r)$ is the space whose norm is given by
$$
\|g\|_{L^q(L^r)} = \bigl(\int_\R\bigl(\int_{\R^{d-1}} |g(t,y)|^r\, dy\bigr)^{\frac{q}r}\, dt\bigr)^{\frac1r}.
$$

It has been known for some time (see for instance \cite{CE2}, \cite{GS}) that the mapping properties of $X^\gamma$ depend on the torsion 
\begin{equation} \label{E:def LP}
L_\gamma = \det(\gamma',\gamma'',\ldots,\gamma^{(d-1)}),
\end{equation} 
the best estimates being possible in the well-curved case, where the torsion never vanishes.  Motivated by recent work on convolution and Fourier restriction operators, we seek to counteract potential degeneracies of curvature.  We accomplish this in the case of polynomial curves by multiplying $X^\gamma$ by a weight that is a power of the affine arclength, and obtain bounds that depend only on the dimension and the degree of the polynomial.  Even for the localized operator, this weight turns out to be optimal in a sense that will be made precise later. 


\section{Background and statement of results}  \label{S:Background}


For the purposes of this discussion, we denote by $X^\gamma_{\rm{loc}}$ the localized operator, given by
$$
X^\gamma_{\rm{loc}} f(t,y) = \int_{\R} f(s,y+s\gamma(t))a(s,t)\, ds,
$$
for some compactly supported $a$.  Because the torsion governs the mapping properties of $X^\gamma$ and $X^\gamma_{\rm{loc}}$, a useful model is $\gamma(t)=P_0(t) = (t,t^2,\ldots,t^{d-1})$, the so-called moment curve.  It is conjectured (necessity was proved by Erdo\u{g}an in \cite{Erdogan}) that the X-ray transform restricted to the moment curve satisfies
\begin{equation} \label{E:loc moment}
\|X^{P_0}_{\rm{loc}}f\|_{L^q(L^r)} \lesssim \|f\|_{L^p}
\end{equation}
if and only if $p$, $q$, and $r$ satisfy
\begin{gather}
\label{E:pqr1}  dp^{-1}\leq (d-1)r^{-1}+1,\\
\label{E:pqr2}  d(d-1)p^{-1} \leq 2q^{-1}+d(d-1)r^{-1},\\
\label{E:pqr3} (d-2)(d+1)p^{-1} \leq d(d-1)r^{-1}. 
\end{gather}
Without the localization in place, scaling dictates that $X^{P_0}:L^p \to L^q(L^r)$ if and only if \eqref{E:pqr1} and \eqref{E:pqr2} hold with equality and $X^{P_0}_{\rm{loc}}$ maps $L^p$ into $L^q(L^r)$.  

For a general curve $\gamma:\R \to \R^{d-1}$, the torsion \eqref{E:def LP} may vanish at some points, and a natural question is whether it is possible to compensate for such degeneracies of curvature.  This question was first formulated in the context of the adjoint Fourier restriction operators by Drury and Marshall, who in \cite{DM85, DM87} asked whether the operators
\begin{equation} \label{E:restriction}
\mathcal{E}^\gamma f(x) = \int_\R e^{ix\cdot \gamma(t)}f(t)\, |L_\gamma(t)|^{\frac2{d(d-1)}}\,dt, \quad x \in \R^{d-1}
\end{equation}
satisfy $L^p(\R) \to L^q(\R^{d-1})$ bounds with $p$ and $q$ independent of the curve $\gamma$.  This seems to be the case, at least for sufficiently nice curves, as has been seen in \cite{BOS2, BOS1, BOS3, DFW, DM11, DW} and other articles.  Later, Drury (\cite{Drury}) asked the same question about the convolution operator
\begin{equation} \label{E:convolution}
T^\gamma f(x) = \int_\R f(x-\gamma(t))\, |L_\gamma(t)|^{\frac2{d(d-1)}}\,dt, \quad x \in \R^{d-1};
\end{equation}
this was settled in the affirmative for polynomial curves in \cite{DLW, O1, StovallJFA}.  (Results on a different class of curves may be found in \cite{O2}.)

Thus in both the restriction and convolution cases, it has been seen that the natural choice to compensate for degeneracies of curvature is the affine arclength measure $\lambda$, which in parametrized form $\gamma^*\lambda$ is given by
\begin{equation} \label{E:aff arclength meas}
d\gamma^*\lambda(t) = |\det(\gamma'(t),\ldots,\gamma^{(d-1)}(t))|^{\frac2{d(d-1)}}\, dt.
\end{equation}
Moreover, affine arclength is extremely well-behaved under affine transformations (cf.\ Lemma~\ref{L:invariance}), and so it is reasonable to expect uniform bounds over certain classes of curves, such as polynomials of a fixed degree.  In the case of restricted X-ray transforms, this suggests the following.

\begin{conjecture} \label{conjecture}
Let $d \geq 3$ and let $P:\R \to \R^{d-1}$ be a polynomial of degree $N$.  Then for any $p,q,r$ satisfying \eqref{E:pqr1} and \eqref{E:pqr2}, with equality in each, and \eqref{E:pqr3}, we have
\begin{equation} \label{E:conj}
\|X^P f\|_{L^q(L^r; d\gamma^*\lambda)} \leq C \|f\|_{L^p},
\end{equation}
for all $f \in L^p$.  The constant $C$ depends only on $d$, $N$, and $\theta$.  Furthermore, if $P$ is a fixed polynomial curve and $L_P$ is not identically zero, then these are the only exponents for which \eqref{E:conj} can hold.  
\end{conjecture}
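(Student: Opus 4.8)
The plan is to establish the two assertions separately: the positive bound \eqref{E:conj}, and the sharpness statement under the hypothesis $L_P\not\equiv 0$. For the former the strategy is \emph{localize, normalize, reassemble}. One first decomposes the $t$--line into $O_{d,N}(1)$ intervals on each of which $L_P$ has constant sign and no interior critical point, and then subdivides each of these dyadically about a point where $|L_P|$ is comparable to its supremum; this is the polynomial decomposition used in \cite{DW, StovallJFA}. On a single piece $I$, translating in $t$ and applying an affine map of $\R^{d-1}$ together with an anisotropic dilation --- operations that preserve the class of restricted X-ray transforms and, by Lemma~\ref{L:invariance}, transform $d\gamma^*\lambda$ in the expected way --- turns $P|_I$ into a controlled perturbation of the moment curve $P_0$ on a fixed interval, with affine arclength density comparable to a constant. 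It therefore suffices to prove, with constant depending only on $d$ and $N$, the localized estimate \eqref{E:loc moment} (and its analogue for curves in a fixed neighborhood of $P_0$) under \eqref{E:pqr1}--\eqref{E:pqr3}, and then to sum. The affine arclength weight is precisely what makes the per-piece constants independent of the scale; the sum over $t$ is lossless because the pieces are disjoint and the outer norm is $L^q$; and the sum on the $f$--side is controlled because for $t\in I$ the operator $X^P$ only sees $f$ on a slab, and after the dyadic organization these slabs have bounded overlap, so the global bound follows by summing, interpolating if needed along the segment on which \eqref{E:pqr1} and \eqref{E:pqr2} hold with equality.

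The local estimate is the core. I would pass to an iterate of $X^\gamma(X^\gamma)^*$, that is, bound the associated $2k$--linear form, and make the change of variables trading the parameters $t_1,\dots,t_{2k}$ and heights $s_1,\dots,s_{2k}$ for points of $\R^d$. The resulting Jacobian factors through Vandermonde-type determinants in which $\prod_i |L_\gamma(t_i)|$ appears, and a geometric (arithmetic--geometric-mean) inequality of the type in \cite{DW, StovallJFA} matches it against $\prod_i d\gamma^*\lambda(t_i)$, reducing the weighted bound to an unweighted multilinear estimate that is stable under perturbation of $P_0$. Running this at several values of $k$ and interpolating --- using restricted weak-type bounds and Lorentz-space interpolation at the corners of \eqref{E:pqr1}--\eqref{E:pqr3} --- gives \eqref{E:loc moment} on the open region and, with additional care, on its boundary.

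For sharpness, fix $P$ with $L_P\not\equiv 0$; then $L_P$ is a nonzero polynomial, hence bounded away from $0$ on some interval. Localizing there and applying Lemma~\ref{L:invariance} exhibits $X^{P_0}_{\rm{loc}}$, up to bounded factors, as a piece of $X^P$, so \eqref{E:conj} forces \eqref{E:loc moment} and therefore \eqref{E:pqr1}--\eqref{E:pqr3} by Erdo\u{g}an's necessity result \cite{Erdogan}. To promote \eqref{E:pqr1} and \eqref{E:pqr2} to equalities one uses the behavior at infinity: after rescaling $t\mapsto Rt$ and removing lower-order terms, the affinely normalized curves $P(R\,\cdot)$ converge as $R\to\infty$ to a curve $Q$ whose components are monomials, for which $X^Q$ has an exact anisotropic scaling symmetry; testing \eqref{E:conj} against data concentrated at scale $R$ and letting $R\to\infty$ transfers the estimate to $X^Q$, and that estimate is compatible with the scaling only when \eqref{E:pqr1} and \eqref{E:pqr2} hold with equality.

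The principal obstacle throughout is the endpoint analysis: \eqref{E:pqr3} is not a scaling condition, and obtaining \eqref{E:conj} on the part of the boundary where \eqref{E:pqr3} is an equality requires the sharp restricted weak-type form of the multilinear estimate above --- exactly where the possible degeneracy of $L_P$ is most costly, and where a ``near-optimal'' rather than optimal bound is the expected outcome. Secondary difficulties, both technical rather than conceptual, are making the decomposition lemma uniform in $N$ (so that the number of pieces and the size of the perturbation of $P_0$ are controlled by $d$ and $N$ alone) and carrying out the reassembly with losslessly summable overlaps on the $f$--side at the scaling-critical exponents.
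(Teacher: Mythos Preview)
This statement is a \emph{conjecture}; the paper does not prove it in full. Theorem~\ref{T:main} establishes \eqref{E:conj} only for $0\le\theta<1$ (equivalently, on the half-open segment excluding the vertex $(p_1,q_1,r_1)$), and at $\theta=1$ obtains only the restricted weak-type bound \eqref{E:weak p1 bound}. Your proposal does not close this gap either --- you correctly flag the endpoint as the principal obstacle --- so at best you are sketching a proof of Theorem~\ref{T:main}, not of the conjecture.

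Even as a route to Theorem~\ref{T:main}, your outline contains a genuine gap in the reassembly step. You write that ``for $t\in I$ the operator $X^P$ only sees $f$ on a slab, and after the dyadic organization these slabs have bounded overlap.'' This is false: $X^Pf(t,y)=\int_\R f(s,y+sP(t))\,ds$ integrates over all $s\in\R$, and as $(t,y)$ ranges over $I\times\R^{d-1}$ the operator accesses all of $f$, not a slab depending on $I$. With an \emph{infinite} dyadic decomposition in $t$, there is no bounded-overlap structure on the $f$-side to sum against, and at the scaling-critical exponents you cannot afford a loss per piece. The paper sidesteps this entirely: the decomposition from Theorem~\ref{T:DW} (see Proposition~\ref{P:jacobian bounds} and Lemma~\ref{L:reductions}) is into $O_{d,N}(1)$ intervals, so a triangle inequality on the outer $L^q$ norm suffices and no summation is needed.

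The paper also does \emph{not} reduce locally to the moment curve. On each of the finitely many intervals it retains the general polynomial (normalized so that $|L_P(t)|\sim|t|^K$) and proves the Jacobian lower bounds of Proposition~\ref{P:jacobian bounds} directly from Theorem~\ref{T:DW}. More significantly, the upgrade from restricted weak-type to strong-type is not achieved by ``restricted weak-type at corners plus Lorentz interpolation'' as you suggest: the paper explicitly notes that no Marcinkiewicz-type theorem is available here because the triples $(p^{-1},q^{-1},r^{-1})$ lie on a single segment (see the opening of Section~\ref{S:Interpolation}). Instead the paper proves a bespoke mixed-Lorentz interpolation lemma (Lemma~\ref{L:interpolation}), extracts from it a partial characterization of quasi-extremizers (Lemma~\ref{L:quasiex interp}), and feeds this into Christ's almost-orthogonality argument (Section~\ref{S:Strong type}, Lemma~\ref{L:disjoint}) to obtain the strong-type bound. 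Your multilinear/interpolation sketch does not engage with this obstruction.

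Your necessity argument is broadly in the right spirit --- reduce to a well-curved piece and invoke \cite{Erdogan}, then use scaling at infinity to force equality in \eqref{E:pqr1}--\eqref{E:pqr2} --- and matches what the paper indicates (``may be proved by modifying the proof of necessity given in \cite{Erdogan}'').
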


Here $\lambda$ is the measure in \eqref{E:aff arclength meas}, and we use the notation
$$
\|g\|_{L^q(L^r; d\gamma^*\lambda)} = \bigl(\int_\R \bigl(\int_{\R^{d-1}}|g(t,y)|^r\, dy\bigr)^{\frac qr} |L_\gamma(t)|^{\frac2{d(d-1)}}\, dt\bigr)^{\frac1q}.
$$

The necessity portion of this conjecture may be proved by modifying the proof of necessity given in \cite{Erdogan} for the well-curved case. 

Before stating our result, we observe that it is possible to rephrase Conjecture~\ref{conjecture} slightly.  The points $p,q,r$ under consideration are precisely those which may be written as
\begin{equation} \label{E:def p}
\bigl(p_{\theta}^{-1},q_{\theta}^{-1},r_{\theta}^{-1}\bigr) = \left(1-\theta + \frac{\theta d}{d+2},\,\,\, \frac{\theta d}{d+2},\,\,\, 1-\theta + \frac{\theta(d^2-d-2)}{d^2+d-2}\right),
\end{equation}
for some $0 \leq \theta \leq 1$.  Thus \eqref{E:conj} when $(p,q,r)=(p_\theta,q_\theta,r_\theta)$ is equivalent to the bound 
\begin{equation} \label{E:strong p theta bound}
\|X^P_{\theta}f \|_{L^{q_\theta}(L^{r_\theta})} \le C  \|f\|_{L^{p_\theta}},
\end{equation}
where
\begin{equation} \label{E:def X}
X^P_{\theta} f(t,y) = \int_{\R} f(s,y+sP(t))|L_P(t)|^{\frac{2\theta}{(d+2)(d-1)}}\, ds,
\end{equation}
and the conjecture is that \eqref{E:strong p theta bound} holds for all $0 \leq \theta \leq 1$.  In this article, we prove the following.

\begin{theorem} \label{T:main}  Let $d \geq 3$ and let $P:\R \to \R^{d-1}$ be a polynomial of degree $N$.  Then for $0 \leq \theta < 1$,  \eqref{E:strong p theta bound} holds for all $f$ in $L^{p_\theta}$.  At the endpoint $\theta = 1$, we have the restricted weak-type bound
\begin{equation} \label{E:weak p1 bound}
|\langle X^P_1 \chi_E, \chi_F \rangle| \le C |E|^{\frac1{p_1}}\|\chi_F\|_{L^{q_1'}(L^{r_1'})},
\end{equation}
for all measurable sets $E, F \subset \R^d$.  The constants $C$ in \eqref{E:strong p theta bound} and \eqref{E:weak p1 bound} depend only on $d$, $N$, and $\theta$.
\end{theorem}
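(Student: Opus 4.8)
The plan is to combine three ingredients: a polynomial decomposition that reduces matters to a single non-degenerate model piece of the curve, a geometric (method-of-refinements) proof of the endpoint restricted weak-type inequality on such a piece, and interpolation against the trivial estimate at $\theta=0$. \textbf{Reduction to a normalized curve.} Using the affine invariance of affine arclength measure (Lemma~\ref{L:invariance}) together with a decomposition of polynomial curves of the sort developed for the convolution and Fourier restriction problems (see e.g.\ \cite{DW, StovallJFA}), I would partition $\R$ into $O_{d,N}(1)$ intervals $I_j$ so that on each $I_j$, after an invertible affine change of variables on $\R^d$ and an affine reparametrization of $t$ — transformations compatible with the sought estimate precisely because $(p_\theta,q_\theta,r_\theta)$ satisfies \eqref{E:pqr1} and \eqref{E:pqr2} with equality — the curve $P|_{I_j}$ is put in a normalized form: a polynomial curve $\gamma$ of degree $\le N$ on a fixed bounded interval whose torsion $L_\gamma$ and low-order derivatives are bounded above and below by constants depending only on $d$ and $N$; on pieces where $L_P$ vanishes identically the weight is zero and there is nothing to prove. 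As the $I_j$ are disjoint in $t$ and boundedly many, Hölder in the $t$-decomposition reduces both \eqref{E:strong p theta bound} and the bilinear inequality \eqref{E:weak p1 bound} for $P$ to the corresponding statements for each normalized $\gamma$, which henceforth I assume given, with $|L_\gamma(t)|\sim 1$ on its parameter interval.

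\textbf{The endpoint $\theta=1$.} This is the core. Taking $f=\chi_E$, $g=\chi_F$ and substituting $z=y+s\gamma(t)$,
\[
\langle X^\gamma_1\chi_E,\chi_F\rangle \;=\; \int\!\!\int\!\!\int \chi_E(s,z)\,\chi_F\bigl(t,\,z-s\gamma(t)\bigr)\,|L_\gamma(t)|^{w_1}\,dz\,ds\,dt,\qquad w_1=\tfrac{2}{(d+2)(d-1)}.
\]
I would prove $\langle X^\gamma_1\chi_E,\chi_F\rangle\lesssim |E|^{1/p_1}\|\chi_F\|_{L^{q_1'}(L^{r_1'})}$ by a refinement argument that uses the set structure of both $E$ and $F$. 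Assuming the left-hand side is $\gtrsim\alpha$, one pigeonholes to fix, up to constants and logarithmic factors, the ``$t$-density'' of $F$, the measures $|E_s|$ of the slices of $E$, and the inner double integral as a function of $t$; one then iterates the adjoint-type maps $(s,t)\mapsto z-s\gamma(t)$ on the order of $d$ times to build a parametrization, with controlled Jacobian, of a positive-measure subset of a product of slices of $E$ by a box in parameter space. The central point is a lower bound for the Jacobian of this iterated map: a Rolle/mean-value computation exhibits it as a Vandermonde-type product $\prod_{i<j}|t_i-t_j|$ in the curve parameters times a product of values of $|L_\gamma|$, and the normalization $|L_\gamma|\sim 1$ makes this bound uniform over all the model pieces. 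Integrating over the relevant simplex and unwinding the pigeonholing, with the exponents $(p_1,q_1,r_1)$ read off from \eqref{E:def p} at $\theta=1$, gives \eqref{E:weak p1 bound}. Only restricted weak-type is obtained, since the method manipulates indicator functions and $\theta=1$ is the extreme corner of the admissible region.

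\textbf{Interpolation and the trivial endpoint.} At $\theta=0$ one has $(p_0,q_0,r_0)=(1,\infty,1)$, and $\|X^\gamma_0 f\|_{L^\infty(L^1)}\le\|f\|_{L^1}$ by Fubini. Since $|L_\gamma(t)|^{w_1 z}$ is bounded above and below uniformly for $\operatorname{Re}z\in[0,1]$ on a normalized piece, the operators $X^\gamma_z f:=|L_\gamma(t)|^{w_1 z}X^\gamma_0 f$ form an admissible analytic family; interpolating the strong-type estimate at $\operatorname{Re}z=0$ against the restricted weak-type estimate at $\operatorname{Re}z=1$ — using the standard fact that a restricted weak-type endpoint together with a strong-type endpoint yields strong-type bounds in the open segment — gives \eqref{E:strong p theta bound} for all $0<\theta<1$. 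Summing the $O_{d,N}(1)$ normalized pieces as in the reduction step completes the proof.

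\textbf{Main obstacle.} Essentially all the difficulty is concentrated in the endpoint argument: one must arrange the pigeonholing separately in the $t$-variable and in the $y$-variable so that the bookkeeping reproduces \emph{exactly} the exponents $(p_1,q_1,r_1)$, and one must verify that the Jacobian lower bound, and hence the entire argument, is genuinely uniform over the boundedly many model curves. The mixed-norm target $L^q(L^r)$ — rather than a single $L^p$ — is what makes this harder than the established affine-arclength estimates for convolution and restriction, since the outer and inner exponents enter at different stages of the refinement; it is also why the result is new even in the well-curved case.
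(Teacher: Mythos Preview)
Your plan contains two genuine gaps, one in the reduction step and one in the interpolation step.

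\textbf{The normalization is not achievable.} You claim that $\R$ can be partitioned into $O_{d,N}(1)$ intervals on each of which an affine change of variables in $\R^{d-1}$ together with an affine reparametrization in $t$ puts $P$ into a form with $|L_\gamma|\sim 1$. This is false whenever $L_P$ has a zero. Under $\tilde P(t)=B\,P(\alpha t+\beta)$ one has $L_{\tilde P}(t)=(\det B)\,\alpha^{\binom d2}L_P(\alpha t+\beta)$, so a zero of $L_P$ of order $K$ becomes a zero of $L_{\tilde P}$ of the same order; no affine reparametrization removes it. The decomposition of \cite{DW} used in the paper (Theorem~\ref{T:DW}) gives only $|L_P(t)|\sim A_j|t-b_j|^{K_j}$ on each $I_j$, and after normalization this becomes $|L_P(t)|\sim|t|^K$ on $I\subset[0,1]$ (Lemma~\ref{L:reductions}). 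The exponent $K$ cannot be made to vanish, and the weight $|t|^{2K\theta/((d+2)(d-1))}$ must be carried through the entire argument, including the Jacobian bounds and the refinement. This is why the paper's Proposition~\ref{P:jacobian bounds} and the proofs in Section~\ref{S:RWT} involve powers of $|t_i|$ throughout.

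\textbf{The interpolation is not standard.} Even granting your normalization (equivalently, even in the well-curved case $K=0$), your claim that ``a restricted weak-type endpoint together with a strong-type endpoint yields strong-type bounds in the open segment'' is precisely what fails for mixed norms $L^q(L^r)$ when the triples $(p^{-1},q^{-1},r^{-1})$ lie on a single line segment. The paper says exactly this at the start of Section~\ref{S:Interpolation}, and the bulk of Sections~\ref{S:Interpolation}--\ref{S:Strong type} is devoted to circumventing it. The route taken is: (i) prove the restricted weak-type inequality \eqref{E:rwt} for all $\theta_0\le\theta\le1$, not just $\theta=1$; (ii) interpolate by hand (Lemma~\ref{L:interpolation}) between $\theta_0$ and $1$ on each dyadic $t$-shell to obtain $L^{p_\theta,1}\to L^{q_\theta,\infty}(L^{r_\theta})$ bounds for $X_\theta^{-k}$, together with a quasi-extremizer characterization (Lemma~\ref{L:quasiex interp}); (iii) use that characterization in a Christ-type almost-orthogonality argument (Section~\ref{S:Strong type}, especially Lemmas~\ref{L:disjoint} and~\ref{L:improved Laghi}) to sum over shells and upgrade to $L^{p_\theta,1}\to L^{q_\theta}(L^{r_\theta})$ for $\theta_0<\theta<1$; only then does real interpolation with the trivial $L^1\to L^\infty(L^1)$ bound give the strong-type estimate for $0<\theta<1$. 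Your proposal skips steps (i)--(iii) entirely, and these are where the new content of the paper lies --- indeed this mixed-norm difficulty is the reason the result was new even for the moment curve in the range $\theta_0<\theta<1$.
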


Thus the conjecture holds except possibly at the endpoint $(p_1,q_1,r_1)$.  Inequality \eqref{E:weak p1 bound} is a restricted weak type version of \eqref{E:strong p theta bound}. The authors believe that their analysis could be modified to obtain a restricted weak type version of \eqref{E:conj} at the endpoint, but \eqref{E:weak p1 bound} seems to have a slightly simpler proof.  

In addition to Theorem~\ref{T:main} being nearly optimal in terms of the exponents involved, we show in Proposition~\ref{P:optimality} that $\lambda$ is in some sense the largest measure for which even the restricted weak type estimates in Conjecture~\ref{conjecture} can hold.  

We will turn in a moment to a discussion of some prior work concerning the $X^P$, but first, a word on the $(p,q,r)$ under consideration.  Three values of $\theta$ carry particular significance in our analysis, and we record the corresponding triples here.  Naturally, two of these values are the endpoints $\theta = 0,1$.  We have
$$
\bigl(p_0,q_0,r_0\bigr) = \bigl(1,\infty,1\bigr), \qquad \bigl(p_1,q_1,r_1\bigr) = \left(\frac{d+2}d, \frac{d+2}d, \frac{d^2+d-2}{d^2-d-2}\right).
$$
The third value, which we denote by $\theta_0$, is the unique parameter satisfying $q_{\theta_0} = r_{\theta_0}$, or equivalently $L^{q_{\theta_0}}(L^{r_{\theta_0}}) = L^{q_{\theta_0}}$.  It is easy to check that $\theta_0 = \frac{(d+2)(d-1)}{d^2+d}$ and
$$
\bigl(p_{\theta_0},q_{\theta_0},r_{\theta_0}\bigr) = \left(\frac{d(d+1)}{d^2-d+2},\frac{d+1}{d-1},\frac{d+1}{d-1}\right).
$$

In the case of the moment curve, weaker versions of \eqref{E:strong p theta bound} are known in all dimensions.  These are due to Wolff in \cite{WolffBilin} when $d=3$, to Erdo\u{g}an in \cite{Erdogan} when $d=4,5$, and to Christ--Erdo\u{g}an in \cite{CE1} when $d\geq6$.  Earlier work concerning non-mixed estimates was carried out in \cite{GS} and \cite{OberlinXray}.  In \cite{CE1, Erdogan, WolffBilin}, \eqref{E:loc moment} was also proved for all $p,q,r$ satisfying \eqref{E:pqr1}, \eqref{E:pqr2}, and \eqref{E:pqr3} with strict inequality in each.  The strong type bound (again in the case of the moment curve) was proved by Laghi when $\theta=\theta_0$ for $d \geq 3$.  By interpolation with the trivial $L^1 \to L^\infty(L^1)$ estimate, Conjecture~\ref{conjecture} has thus been verified in the case of the moment curve when $d \geq 3$ and $0 \leq \theta \leq \theta_0$.  Thus even in the well-curved case, some of our results are new.

For more general curves, the endpoint restricted-weak type (unweighted, hence depending on $P$), $L^{p,1} \to L^{q,\infty}$ estimates for $X^P_{\rm{loc}}$ follow from the work of Gressman in \cite{Gressman}.  It seems likely that all of the (again unweighted) restricted weak type $L^p \to L^q(L^r)$ estimates for $X^P_{\rm{loc}}$ may be proved by combining the techniques in \cite{Gressman} with those in \cite{CE1, CE2}, but the authors have not undertaken to verify this.  For $(p^{-1},q^{-1},r^{-1})$ lying in the interior of the conjectured region of $L^p \to L^q(L^r)$ boundedness, the result was established in \cite{CE2}.  Our theorem differs from all of these results in two significant ways.  First, the results of \cite{Gressman} do not involve a weight, and so the exponents involved and the bounds obtained depend on the particular curve under consideration.  Second, we establish strong type estimates in many cases where solely applying the results of \cite{CE1} and \cite{Gressman} would yield restricted weak type bounds.  We will say more about these issues in a moment. 

We remark that there is an equivalent point of view, namely the double-fibration formulation, which originated in \cite{GGS, GuillemanSternberg} and which was discussed at length in \cite{TW}. More specifically, by duality, Theorem~\ref{T:main} implies that for any measurable set $\Omega \subset \R^{d+1}$, we have
\begin{equation} \label{pi12}
\int \chi_{\Omega}(s,t,x)\, |L_P(t)|^{\frac{2\theta}{(d+2)(d-1)}}\, ds\,dt\,dx \lesssim |\pi_1(\Omega)|^{\frac1{p_\theta}}\|\chi_{\pi_2(\Omega)}\|_{L^{q_\theta'}(L^{r_\theta'})},
\end{equation}
where the mappings $\pi_1$, $\pi_2:\R^{d+1} \to \R^d$ are defined by
\[
\pi_1(s,t,x)=(s,x+sP(t)), \quad \pi_2(s,t,x) = (t,x).
\]
Inequality (\ref{pi12}) can be regarded as an isoperimetric inequality for sets in $\R^{d+1}$ and thus may be of independent interest. We will, however, not elaborate further on this point of view.

\subsection*{Outline of proof.}  In Section~\ref{S:Preliminaries}, we set out some preliminaries and prove the invariance and optimality assertions made in the remarks above.  Our proof uses the method of refinements (cf.\ \cite{CCC}), and as such, we need lower bounds for the Jacobian determinants of certain maps that arise when we iterate; these are obtained in Section~\ref{S:Jacobians}.  In Section~\ref{S:RWT}, we prove the restricted weak type version of \eqref{E:strong p theta bound} for $\theta_0 \leq \theta \leq 1$.  To do this, we use the lower bounds from Section~\ref{S:Jacobians} as well as ideas adapted from \cite{CE1}.  We note that the presence of the affine arclength term means that even in the non-mixed case, these restricted weak type estimates do not follow directly from the results of \cite{Gressman}, which is why we use more explicit computations.  

It is not, to the authors' knowledge, known whether there is an analogue of Marcinkiewicz interpolation that could be used to prove the main theorem from this restricted weak type result, and so our work is not done.  In Section~\ref{S:Interpolation}, we prove a simple interpolation lemma, thereby obtaining improved, but non-optimal, bounds in the range $\theta_0 < \theta < 1$.  We also give a partial characterization of the quasi-extremizers for these bounds.  Finally, in Section~\ref{S:Strong type}, we complete the proof by adapting an argument of Christ in \cite{ChQex}, which has previously only been used in the non-mixed setting.  This adaptation uses the characterization of quasi-extremizers from Section~\ref{S:Interpolation} and seems to be the first time that such a result has been used in conjunction with the methods of \cite{ChQex} to prove strong type bounds (even in the non-mixed case).

\subsection*{Notation.} If $A$ and $B$ are two positive numbers, then we write $A \lesssim B$ to mean that $A \leq CB$, where the constant $C>0$ may change from line to line and depends only on $d$, $\theta$, and the degree $N$ of $P$.  By $A \sim B$, we mean $A \lesssim B$ and $B \lesssim A$.  We will occasionally write `$A \ll B$' as a hypothesis; this is just a short-hand for `$A \leq c B$ for some sufficiently small constant $c>0$ depending only on $d$, $N$, and $\theta$.'  Finally, we define $\Pi:\R^d\to\R$ to be the projection $\Pi(t,y) := t$.

\subsection*{Acknowledgments}  This collaboration was facilitated in part by the Research Support Fund of the Edinburgh Mathematical Society.  The second author is an NSF Postdoctoral Research Fellow.  We are grateful to Terence Tao for enlightening conversations regarding interpolation with mixed norms.


\section{Preliminary considerations} \label{S:Preliminaries}


We begin by noting that when $\theta = 0$, the strong-type bound in Theorem~\ref{T:main} is trivial by Fubini's theorem.  We record this observation here.

\begin{lemma} \label{L:L1 to LinftyL1}
The operator $X^P$ is a bounded operator from $L^1$ to $L^{\infty}(L^1)$ and satisfies the bound
$$
\|X^P f\|_{L^{\infty}(L^1)} \leq \|f\|_{L^1}, \qquad f \in L^1.
$$
\end{lemma}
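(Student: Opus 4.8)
The plan is to prove this directly from Fubini's theorem, since the claimed estimate $\|X^P f\|_{L^\infty(L^1)} \le \|f\|_{L^1}$ corresponds to $\theta = 0$, for which the weight $|L_P(t)|^{2\theta/((d+2)(d-1))}$ is identically $1$, so no information about the curve is used at all. Concretely, for fixed $t \in \R$, I would bound the inner $L^1_y$ norm:
\[
\int_{\R^{d-1}} |X^P f(t,y)|\, dy \le \int_{\R^{d-1}} \int_{\R} |f(s, y + sP(t))|\, ds\, dy.
\]
By Tonelli's theorem the order of integration may be exchanged, and for each fixed $s$ the change of variables $y \mapsto y + sP(t)$ in the $y$-integral is a translation, hence measure-preserving; thus the right-hand side equals $\int_\R \int_{\R^{d-1}} |f(s,y)|\, dy\, ds = \|f\|_{L^1(\R^d)}$.

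Since this bound is uniform in $t$, taking the supremum (essential supremum) over $t$ gives $\|X^P f\|_{L^\infty(L^1)} \le \|f\|_{L^1}$, which is exactly the asserted inequality. I would also note in passing that the same computation shows $X^P f(t,y)$ is well-defined as an absolutely convergent integral for a.e.\ $(t,y)$ whenever $f \in L^1$.

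There is essentially no obstacle here: the only mild points to be careful about are the application of Tonelli to justify interchanging the $ds$ and $dy$ integrals for the nonnegative integrand $|f(s, y+sP(t))|$, and the translation-invariance of Lebesgue measure on $\R^{d-1}$. Both are standard. This lemma is stated mainly to record the $\theta = 0$ endpoint of Theorem~\ref{T:main} cleanly, and it will later serve as one end of an interpolation.
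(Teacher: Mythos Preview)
Your proof is correct and is exactly the approach the paper takes: the paper simply remarks that the $\theta=0$ bound is trivial by Fubini's theorem, and you have spelled out the details of that Fubini/Tonelli computation.
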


For $\theta > 0$, we will not be able to compute the operator norm exactly, but as noted earlier, the operator norms of the $X_\theta^P$ are invariant under affine transformations and reparametrizations of $P$.  More concretely, we have the following

\begin{lemma} \label{L:invariance}
Let $A:\R^{d-1} \to \R^{d-1}$ be an invertible affine transformation, $A=B+c$ with $c\in\R^{d-1}$ and $B\in GL(d-1,\R)$, and let $\phi:\R \to \R$ be a diffeomorphism. Then, if $P:\R \to \R^{d-1}$ is a polynomial and $f \in L^{p_{\theta}}$ is not identically zero, we have
\begin{equation} \label{E:invariance}
\frac{\|X^P_{\theta} f\|_{L^{q_{\theta}}(L^{r_{\theta}})}}{\|f\|_{L^{p_{\theta}}}} = \frac{\|X^{\tilde P}_{\theta} \tilde f\|_{L^{q_{\theta}}(L^{r_{\theta}})}}{\|\tilde f\|_{L^{p_{\theta}}}},
\end{equation}
where $\tilde P = AP\circ \phi$ and $\tilde f(s,x) = f(s,A^{-1}y)$.  In particular, $X^P_\theta$ is a bounded operator from $L^{p_\theta}$ to $L^{q_\theta}(L^{r_\theta})$ if and only if $X^{\tilde P}_\theta$ is, and moreover, the two have the same operator norms.  
\end{lemma}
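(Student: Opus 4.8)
The plan is to verify the identity \eqref{E:invariance} by a direct change of variables, treating the affine transformation $A = B + c$ and the reparametrization $\phi$ separately since they affect different variables. First I would handle the reparametrization. Recall $X^{\tilde P}_\theta \tilde f(t,y) = \int_\R \tilde f(s, y + s\tilde P(t))\,|L_{\tilde P}(t)|^{\frac{2\theta}{(d+2)(d-1)}}\,ds$. By the chain rule applied to $\tilde P = AP\circ\phi$, one computes $L_{\tilde P}(t) = (\det B)\,L_P(\phi(t))\,\phi'(t)^{d(d-1)/2}$, using that the $j$-th derivative of $P\circ\phi$ is a sum of $\phi'(t)^j P^{(j)}(\phi(t))$ plus terms involving lower-order derivatives of $P$, so that after taking the determinant $\det(\gamma',\dots,\gamma^{(d-1)})$ the lower-order contributions cancel by multilinearity and only the product of the leading terms survives, giving the factor $\phi'(t)^{1+2+\cdots+(d-1)} = \phi'(t)^{d(d-1)/2}$; the factor $\det B$ comes from pulling $B$ out of each column.

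Next I would carry out the substitutions in the mixed-norm integral. In the inner $y$-integral over $\R^{d-1}$, substitute $y = Bz + c$ (or, more cleanly, absorb the affine shift into the spatial variable of $f$); since $\tilde f(s,y) = f(s, A^{-1}y) = f(s, B^{-1}(y-c))$, this produces a Jacobian factor $|\det B|$ from $dy = |\det B|\,dz$, and simultaneously converts $y + s\tilde P(t) = y + sAP(\phi(t))$ into an expression of the form $B(z + sP(\phi(t))) + c$ after the shift, so that $A^{-1}$ of it equals $z + sP(\phi(t))$. Collecting the $|\det B|$ powers: the inner $L^{r_\theta}$ norm contributes $|\det B|^{1/r_\theta}$, which is then raised to the power $q_\theta$ and integrated, and finally we take the $1/q_\theta$ root, giving an overall factor $|\det B|^{1/r_\theta}$ on the left side of a preliminary identity; the same substitution in $\|\tilde f\|_{L^{p_\theta}} = \|f(\cdot, B^{-1}(\cdot - c))\|_{L^{p_\theta}}$ gives $|\det B|^{1/p_\theta}$. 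For the ratio in \eqref{E:invariance} to be preserved we need $\frac{1}{r_\theta}$ arising from the $L^r$-side, together with the $|L_{\tilde P}|$ weight contributing $|\det B|^{\frac{2\theta}{(d+2)(d-1)}\cdot\frac{1}{q_\theta}}$, to match $\frac{1}{p_\theta}$; this is exactly the scaling relation encoded in the equalities in \eqref{E:pqr1} and \eqref{E:pqr2}, so it is a bookkeeping check using \eqref{E:def p}. Finally, in the $t$-integral I substitute $t = \phi^{-1}(u)$; the factor $\phi'(t)^{d(d-1)/2}$ inside $|L_{\tilde P}(t)|^{\frac{2\theta}{(d+2)(d-1)}}$ combines with $dt = |\phi'(\phi^{-1}(u))|^{-1}\,du$, and one checks $\frac{2\theta}{(d+2)(d-1)}\cdot\frac{d(d-1)}{2}\cdot q_\theta^{-1} \cdot q_\theta= 1$ — again using $q_\theta^{-1} = \frac{\theta d}{d+2}$ — so the powers of $\phi'$ cancel exactly and the $t$-integral transforms cleanly.

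The main obstacle, such as it is, is the determinant computation $L_{\tilde P} = (\det B)(L_P\circ\phi)(\phi')^{d(d-1)/2}$: one must argue carefully via the multilinearity and alternating properties of the determinant that all the Faà di Bruno cross-terms drop out, leaving only the leading term in each column. Everything else is routine changes of variables plus the observation that the exponent relations \eqref{E:pqr1}, \eqref{E:pqr2} (holding with equality, as in \eqref{E:def p}) are precisely the conditions that make the Jacobian powers cancel in the ratio; this is why the affine arclength weight, rather than some other power of $|L_P|$, is the natural one. The last sentence of the lemma is then immediate: taking the supremum over $f \not\equiv 0$ of both sides of \eqref{E:invariance} and noting $f \mapsto \tilde f$ is a bijection of $L^{p_\theta}\setminus\{0\}$ shows the operator norms coincide, and in particular one is bounded iff the other is.
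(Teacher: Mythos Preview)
Your approach is exactly the routine change-of-variables computation the paper has in mind (the paper itself omits the proof, calling it ``a routine computation, which we leave to the reader''), and the key ingredient you identify---the formula $L_{\tilde P}(t) = (\det B)\,L_P(\phi(t))\,\phi'(t)^{d(d-1)/2}$ together with the exponent relations in \eqref{E:def p}---is correct.

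One small bookkeeping slip: the weight $|L_{\tilde P}(t)|^{\frac{2\theta}{(d+2)(d-1)}}$ sits \emph{inside} the operator $X^{\tilde P}_\theta$ (see \eqref{E:def X}), not as a measure for the outer $t$-integral, so the constant $|\det B|$ it contributes to $\|X^{\tilde P}_\theta \tilde f\|_{L^{q_\theta}(L^{r_\theta})}$ is $|\det B|^{\frac{2\theta}{(d+2)(d-1)}}$, without the extra factor of $q_\theta^{-1}$ you wrote. The identity you actually need is
\[
\frac{1}{r_\theta} + \frac{2\theta}{(d+2)(d-1)} = \frac{1}{p_\theta},
\]
which is precisely \eqref{E:pqr1} with equality (and follows immediately from \eqref{E:def p}). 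With that correction the $|\det B|$ powers cancel in the ratio as claimed, and your treatment of the $\phi'$ powers is already correct.
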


The proof is a routine computation, which we leave to the reader.

We now turn to the main goal of this section, which is to show that the weight that we use is optimal in the following sense.  

\begin{proposition} \label{P:optimality}  Let $0 \leq \theta \leq 1$.  Assume that $\rho$ is a positive Borel measure on $\R$ such that for any Borel sets $E$, $F$ in $\R^d$, we have
\begin{equation} \label{E:rho assumption}
\int_{\R^{d+1}} X^P \chi_E(t,y) \chi_F(t,y)\, d\rho(t)\, dy \leq C |E|^{\frac1{p_\theta}} \|\chi_F\|_{L^{q_\theta'}(L^{r_\theta'})}
\end{equation}
for some constant $C$.  Then $\rho$ is absolutely continuous with respect to Lebesgue measure and its Radon--Nikodym derivative satisfies
$$
\frac{d\rho}{dt}(t) \leq C_d C|L_P(t)|^{\frac{2\theta}{(d+2)(d-1)}},
$$
for some constant $C_d$ depending only on $d$. Here the constant $C$ is the same in both of the above inequalites.
\end{proposition}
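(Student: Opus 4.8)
The plan is to test the hypothesized inequality \eqref{E:rho assumption} against suitable localized sets $E$ and $F$ and extract pointwise information about $\rho$ by letting the sets shrink. Fix a point $t_0 \in \R$ at which $L_P(t_0) \neq 0$ (the set where $L_P$ vanishes is finite since $P$ is polynomial, hence of $\rho$-measure zero once we know $\rho$ is absolutely continuous, so it may be ignored; alternatively one handles $L_P(t_0)=0$ by a limiting argument). Near $t_0$, after applying the affine invariance of Lemma~\ref{L:invariance} we may assume $P(t_0)$ and its first $d-1$ derivatives are in a normalized position, so that the map $t \mapsto P(t)$ is a nondegenerate curve on a small interval $I = [t_0-\delta, t_0+\delta]$ and $|L_P(t)| \sim |L_P(t_0)|$ there. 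The idea is then to choose $E$ to be a thin slab adapted to the line direction and $F$ to be a small box in the $(t,y)$ variables, compute both sides of \eqref{E:rho assumption}, and compare.

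Concretely, I would take $F = I \times Q$ where $Q \subset \R^{d-1}$ is a cube of side length $h$ centered at some point $y_0$, and take $E$ to be a ``tube'' of the form $E = \{(s,y) : |s| \leq \sigma,\ |y + sP(t) - y_0| \lesssim h \text{ for some } t \in I\}$ — that is, $E$ should be (a neighborhood of) the union over $t \in I$ of the line segments $\{(s, y_0 + sP(t)) : |s|\le\sigma\}$ that contribute to $X^P\chi_E(t,y)$ for $(t,y) \in F$. For $(t,y) \in F$ one then has $X^P\chi_E(t,y) \gtrsim \sigma$, so the left side of \eqref{E:rho assumption} is $\gtrsim \sigma \cdot \rho(I) \cdot h^{d-1}$. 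On the right side, $\|\chi_F\|_{L^{q_\theta'}(L^{r_\theta'})} = \rho(I)^{1/q_\theta'} h^{(d-1)/r_\theta'}$ — here it is essential that the outer ($t$) integral of the mixed norm on the $F$-side is taken against $d\rho$, matching the convention in \eqref{E:rho assumption}. The measure of $E$ is estimated by a Jacobian/geometry-of-tubes computation: the union of segments over the parameter interval $I$ of length $2\delta$, each of ``thickness'' $\sim h$ in the transverse directions and length $\sim \sigma$ in $s$, has volume $|E| \sim \sigma \cdot h^{d-1} \cdot (\text{spread of the curve over } I)$, and by nondegeneracy of $P$ on $I$ this spread is comparable to a power of $\delta$ and $|L_P(t_0)|$ via the affine arclength; after normalization it is $\sim |L_P(t_0)|^{\text{(something)}}\delta^{\text{(something)}}$. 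Plugging these into \eqref{E:rho assumption} yields an inequality of the shape
\[
\sigma\, \rho(I)\, h^{d-1} \;\lesssim\; C\,\bigl(\sigma h^{d-1} \cdot \text{(spread)}\bigr)^{1/p_\theta}\,\rho(I)^{1/q_\theta'}\, h^{(d-1)/r_\theta'}.
\]
Solving for $\rho(I)$, using the defining relations \eqref{E:pqr1}, \eqref{E:pqr2} (with equality) for $(p_\theta,q_\theta,r_\theta)$ to make the powers of $\sigma$ and $h$ balance and to isolate $\rho(I)^{1/q_\theta}$, gives $\rho(I) \lesssim C\,|L_P(t_0)|^{\frac{2\theta}{(d+2)(d-1)}}\,|I|$, uniformly in the center $t_0$ and the length $\delta$. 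Letting $\delta \to 0$ and invoking the Lebesgue differentiation theorem produces both the absolute continuity of $\rho$ and the claimed pointwise bound on $d\rho/dt$, with the constant $C_d$ absorbing the various implicit geometric constants.

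The main obstacle — and the step requiring the most care — is the bookkeeping of exponents: one must choose the relative sizes of $\sigma$, $h$, and $\delta$ (there is freedom here, e.g. one can even fix $\sigma = 1$) so that the powers of $h$ cancel and the powers of $\rho(I)$ combine to give exactly $\rho(I)^{1/q_\theta}$ on one side, and then verify via \eqref{E:pqr1}--\eqref{E:pqr3} and the normalization \eqref{E:def X} of the weight exponent that the surviving power of the ``spread'' is precisely $|L_P(t_0)|^{\frac{2\theta}{(d+2)(d-1)}}$. A secondary technical point is making the heuristic ``$|E| \sim \sigma h^{d-1}\cdot\text{spread}$'' rigorous: this is where the affine-arclength geometry enters, and it is cleanest to normalize via Lemma~\ref{L:invariance} so that $P$ looks like the moment curve on $I$, compute the volume of the resulting neighborhood of a union of segments by an explicit change of variables $(s,t,\text{transverse}) \mapsto (s, y)$ whose Jacobian is a Vandermonde-type determinant comparable to $L_P$, and then undo the normalization. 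Once $E$, $F$ are correctly chosen and these two computations are in hand, the rest is the elementary limiting argument.
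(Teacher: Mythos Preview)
Your overall strategy --- test \eqref{E:rho assumption} against well-chosen localized sets and let them shrink --- is exactly the paper's strategy. But there is a genuine error in your reading of the hypothesis: the mixed norm $\|\chi_F\|_{L^{q_\theta'}(L^{r_\theta'})}$ on the right side of \eqref{E:rho assumption} is taken with respect to \emph{Lebesgue measure} in $t$, not $d\rho$. The measure $\rho$ appears only on the left side, in the pairing. So your formula $\|\chi_F\|_{L^{q_\theta'}(L^{r_\theta'})} = \rho(I)^{1/q_\theta'} h^{(d-1)/r_\theta'}$ is wrong; it should be $|I|^{1/q_\theta'} h^{(d-1)/r_\theta'}$. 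This changes the bookkeeping completely: you do not need to ``combine powers of $\rho(I)$ to get $\rho(I)^{1/q_\theta}$'' --- instead $\rho(I)$ appears only once, on the left, and is bounded directly.

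A second issue is your choice of test sets. Using an isotropic cube $Q$ of side $h$ for the $y$-slice of $F$, and a union-of-tubes for $E$, forces you into the vague ``spread'' computation and the balancing of three parameters $\sigma,h,\delta$. The paper avoids all of this by taking a single parameter $\delta$ and an \emph{anisotropic} parallelepiped adapted to the curve: with $\sigma=1$,
\[
F = \Bigl\{(t,y): |t-t_0|<\delta,\ y=\textstyle\sum_{j=1}^{d-1} v_j P^{(j)}(t_0),\ |v_j|<\delta^j\Bigr\},
\]
and $E$ the analogous box with $|v_j|<2\delta^j$ and $|s|<1$. Then $|E|$ and the $y$-volume of $F$ are computed exactly (each is $\sim \delta^{d(d-1)/2}|L_P(t_0)|$), Taylor expansion plus Cramer's rule shows $(t,y)\in F$, $|s|\le 1$ implies $(s,y+sP(t))\in E$, and the inequality $\rho([t_0-\delta,t_0+\delta]) \le C_d C |L_P(t_0)|^{\frac{2\theta}{(d+2)(d-1)}}\delta$ drops out by pure algebra from \eqref{E:pqr1}--\eqref{E:pqr2}. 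No normalization via Lemma~\ref{L:invariance} is needed (and indeed that lemma concerns the weighted operator $X_\theta^P$, not the general-$\rho$ setup here). Finally, the case $L_P\equiv 0$ is not handled by your parenthetical remark: the paper treats it separately, showing the image of $P$ lies in a hyperplane and hence $\rho\equiv 0$ when $\theta>0$.
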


We note that in the case of the convolution and Fourier restriction operators \eqref{E:restriction} and \eqref{E:convolution}, the analogous results are due to Oberlin in \cite{MichMath}.

We begin with the endpoint $\theta=0$.

\begin{proof}[Proof of Proposition~\ref{P:optimality} when $\theta=0$]  Let $t_0 \in \R$ and let $0 < \delta < 1$.  Define sets
$$
F = \{(t,y):|t-t_0|\leq \delta, \, |y| \leq 1\}, \quad E = \{(s,y+sP(t)):|s| \leq 1, \, |t-t_0| \leq \delta, \, |y| \leq 1\}.
$$
We then have that $\|\chi_F\|_{L^1(L^{\infty})} = 2\delta$ and for $\delta$ sufficiently small (depending on $t_0$), $|E| \leq C_d$.  Furthermore, if $(t,y) \in F$, then it is obvious that $X^P\chi_E(t,y) = 2$.  Therefore
$$
\rho([t_0-\delta,t_0+\delta]) \leq C_d \int X^P \chi_E(t,y) \chi_F(t,y)\, d\rho(t)\, dy,
$$
and so our assumption \eqref{E:rho assumption} implies that $\rho([t_0-\delta,t_0+\delta]) \leq C_d C \delta$.  This completes the proof.
\end{proof}

We now turn to the case when $\theta > 0$.  It is in this case that curvature plays a role, as we see in the following.

\begin{lemma} \label{L:flat case}  Suppose that $L_P \equiv 0$.  Then the image of $P$ lies in a hyperplane.  Moreover, \eqref{E:rho assumption} is only possible if $\rho \equiv 0$ or $\theta = 0$.  
\end{lemma}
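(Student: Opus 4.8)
The plan is to establish the two assertions in turn. For the first, I would note that the $(i,j)$ entry of the matrix $\bigl(P',\dots,P^{(d-1)}\bigr)$ is $P_i^{(j)}=(P_i')^{(j-1)}$, so that $L_P$ is exactly the Wronskian of the $d-1$ scalar polynomials $P_1',\dots,P_{d-1}'$. By the classical fact that the Wronskian of finitely many polynomials (indeed, of finitely many real-analytic functions on an interval) vanishes identically if and only if the functions are linearly dependent over $\R$, the hypothesis $L_P\equiv 0$ produces a nonzero $c\in\R^{d-1}$ with $c\cdot P'(t)\equiv 0$; integrating gives $c\cdot P(t)\equiv c\cdot P(0)=:\kappa$, so the image of $P$ lies in the hyperplane $\{z\in\R^{d-1}:c\cdot z=\kappa\}$. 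After rescaling, assume $|c|=1$.

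For the second assertion, suppose $\theta>0$ and that \eqref{E:rho assumption} holds with some constant $C$; fix an arbitrary compact interval $I\subset\R$ and set $M:=\sup_{t\in I}|P(t)|<\infty$ and $R:=2(M+1)$. The key observation is that $c\cdot\bigl(y+sP(t)\bigr)-s\kappa=c\cdot y$, so that the set $\{(s,y+sP(t)):|s|\le 1,\ t\in I,\ |y|\le R/2\}$ lies inside the fixed hyperplane-slab $\{(s,z):|c\cdot z-s\kappa|\le R/2\}$ of $\R^d$. I would therefore test \eqref{E:rho assumption} on
\[
E_\delta:=\{(s,z):|s|\le 1,\ |z|\le R,\ |c\cdot z-s\kappa|\le\delta\},\qquad
F_\delta:=\{(t,y):t\in I,\ |y|\le R/2,\ |c\cdot y|\le\delta\}
\]
for small $\delta>0$. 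Whenever $(t,y)\in F_\delta$ and $|s|\le 1$ one has $|y+sP(t)|\le R/2+M<R$ and $c\cdot(y+sP(t))-s\kappa=c\cdot y$, hence $(s,y+sP(t))\in E_\delta$; therefore $X^P\chi_{E_\delta}\ge 2$ everywhere on $F_\delta$.

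A routine Fubini computation in the $c$-direction gives, for $\delta$ small, $|E_\delta|\sim R^{d-2}\delta$ and $V_\delta:=\bigl|\{y:|y|\le R/2,\ |c\cdot y|\le\delta\}\bigr|\sim R^{d-2}\delta$, with implied constants depending only on $d$. From $X^P\chi_{E_\delta}\ge 2$ on $F_\delta$, the left side of \eqref{E:rho assumption} is at least $2V_\delta\,\rho(I)$; since also $\|\chi_{F_\delta}\|_{L^{q_\theta'}(L^{r_\theta'})}=|I|^{1/q_\theta'}V_\delta^{1/r_\theta'}$, feeding these into \eqref{E:rho assumption} and dividing by $V_\delta$ yields
\[
\rho(I)\ \lsm\ |E_\delta|^{1/p_\theta}\,V_\delta^{1/r_\theta'-1}\ \sim\ \bigl(R^{d-2}\delta\bigr)^{1/p_\theta-1/r_\theta},
\]
with implied constant depending on $d,\theta,C,I,P$ but not on $\delta$ (note $1/r_\theta'-1=-1/r_\theta\le 0$, which is why the two-sided bound on $V_\delta$ is needed). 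From \eqref{E:def p} one computes $p_\theta^{-1}-r_\theta^{-1}=\tfrac{2\theta}{(d+2)(d-1)}$, strictly positive for $\theta>0$---and, tellingly, exactly the exponent in the affine-arclength weight. Hence the right-hand side tends to $0$ as $\delta\to 0^+$, forcing $\rho(I)=0$; as $I$ was arbitrary, $\rho\equiv 0$. (If $\rho(I)$ were infinite the displayed inequality would already be contradictory, its right-hand side being finite.)

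The step I expect to be the main obstacle is the choice of test sets in the second part. A slab perpendicular to $c$ fails when $\kappa=c\cdot P$ is nonzero: the fiber lines then cross such slabs transversally, so $X^P\chi_{E_\delta}$ is forced to be of size $\sim\delta$ rather than $\sim 1$, and the resulting loss of a power of $\delta$ reverses the sign of the relevant exponent. The remedy is the \emph{tilted} slab $\{|c\cdot z-s\kappa|\le\delta\}$, which contains each relevant fiber line in its entirety regardless of $\kappa$; once that is in place, the argument reduces to the numerology, namely the positivity of $p_\theta^{-1}-r_\theta^{-1}$ for $\theta>0$, which is precisely what separates the case $\theta>0$ from $\theta=0$.
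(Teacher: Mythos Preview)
Your proof is correct. For the second assertion it is essentially the paper's argument: both test \eqref{E:rho assumption} on a thin slab in the flat direction and exploit that $p_\theta^{-1}-r_\theta^{-1}=\tfrac{2\theta}{(d+2)(d-1)}>0$. The paper first translates and rotates so that $P\subset\R^{d-2}\times\{0\}$ (i.e.\ $\kappa=0$) and then uses an axis-aligned slab, whereas you keep $\kappa$ arbitrary and tilt the slab via $|c\cdot z-s\kappa|\le\delta$; these are equivalent up to an affine change of variables (Lemma~\ref{L:invariance}).

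The first assertion is where your argument genuinely differs. You observe that $L_P$ is precisely the Wronskian of $P_1',\dots,P_{d-1}'$ and invoke the classical fact that the Wronskian of real-analytic functions vanishes identically only if they are linearly dependent; this immediately yields a nonzero $c$ with $c\cdot P'\equiv 0$. The paper instead gives a self-contained inductive argument: it fixes the largest $j$ for which $P',\dots,P^{(j)}$ are generically independent, and by differentiating the wedge product $P'\wedge\cdots\wedge P^{(j)}\wedge P^{(k)}$ shows that every higher derivative $P^{(k)}$ lies in $\mathrm{span}\{P',\dots,P^{(j)}\}$ at a single point, whence $P$ lies in a $j$-dimensional affine subspace via its Taylor expansion. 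Your route is shorter and cleaner if one is willing to cite the Wronskian criterion; the paper's route has the virtue of being self-contained and in fact proves slightly more (that the image lies in a subspace whose dimension matches the generic rank of the derivative matrix).
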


The authors do not claim that this is a new result, but as we could not find a proof in the literature, we decided to include its simple proof for the convenience of the reader.

\begin{proof}[Proof of Lemma~\ref{L:flat case}]
We begin by proving the first conclusion.  For each $j=1,\ldots,d-1$, let 
$$
A_j = \{t \in \R : \dim(\tnorm{span}\{P'(t),\ldots,P^{(j)}(t)\}) = j\}.
$$
Then $A_1 \supset A_2 \supset \cdots \supset A_{d-1}$, and our hypothesis is that $A_{d-1} = \emptyset$.  If $A_1 = \emptyset$, then $P'(t) \equiv 0$, and the result is trivial.  Otherwise, we may fix $j$ ($1 \leq j \leq d-2$) to be the (unique) index such that $A_j \neq \emptyset$ and $A_{j+1} = \emptyset$.  Since $A_j$ is obviously open, it contains an open interval $J$.  

On $J$, we have that 
$$
P'(t) \wedge \cdots \wedge P^{(j)}(t) \wedge P^{(j+1)}(t) \equiv 0.
$$
Let us assume in addition that 
\begin{equation} \label{E:wedge to k}
P'(t) \wedge \cdots \wedge P^{(j)}(t) \wedge P^{(k)}(t) \equiv 0,
\end{equation}
for some $k \geq j+1$.  Differentiating \eqref{E:wedge to k}, we see that
$$
(P'(t) \wedge \cdots \wedge P^{(j+1)}(t) \wedge P^{(k)}(t)) + (P'(t) \wedge \cdots \wedge P^{(j)}(t) \wedge P^{(k+1)}(t)) \equiv 0
$$
on $J$.  Our hypotheses imply that $P^{(j+1)}(t)$ and $P^{(k)}(t)$ both lie in the span of $P'(t),\ldots,P^{(j)}(t)$ (which we have assumed are linearly independent) for every $t \in J$, so the first term in the above sum is identically zero.  This completes the inductive step, verifying that
$$
P'(t) \wedge \cdots \wedge P^{(j)}(t) \wedge P^{(k)}(t) \equiv 0
$$
on $J$ (and hence on $\R$) for each $k \in \N$.  

Without loss of generality, $0 \in J$ and $P(0) =0$.  For any $t \in \R$, we have
$$
P(t) = \sum_{n=1}^N \frac{t^n}{n!} P^{(n)}(0),
$$
and thus by the previous observation, $P$ lies in the subspace spanned by $P'(0),\ldots,\linebreak P^{(j)}(0)$.  Recalling that $j < d-1$, we have proved that the image of $P$ lies in a hyperplane.  

Applying a rotation if necessary, we may assume that $P \subset \R^{d-2} \times \{0\}$.  Given a bounded interval $I \subset \R$ and $\delta > 0$, we define sets
\begin{align*}
F&= \{(t,y',y_{d-1})\in \R\times\R^{d-2}\times\R : t \in I, \, |y'| \leq 1, \, |y_{d-1}| \leq \delta\} \\
E&= \{(s,x',x_{d-1})\in \R\times\R^{d-2}\times\R : |s| \leq 1, \, |x'| \leq 1+\sup_I|P(t)|, \, |x_{d-1}| \leq \delta\}.
\end{align*}
We observe that $|E| \leq C_{I,P,d} \delta$, $\int_F d\rho(t)\, dy \sim \rho(I)\delta$, and $\|\chi_F\|_{L^{q'}(L^{r'})} \sim |I|^{\frac1{q'}}\delta^{\frac1{r'}}$.  Additionally, for $(t,y) \in F$, we have that $X^P\chi_E(t,y) = 2$, so
$$
\rho(I)\delta \lesssim \int_{I\times \R^d} X^P \chi_E(t,y) \chi_F(t,y)\, d\rho(t)\, dy \lsm \delta^{\frac{1}{p}} |I|^{\frac{1}{q'}} \delta^{\frac{1}{r'}},
$$
and if $\theta > 0$ (so $r' < p' < \infty$), we see that $\rho(I) = 0$ by letting $\delta \searrow 0$.  This completes the proof of the lemma.
\end{proof}

We are finally ready to complete the proof of Proposition~\ref{P:optimality}.

\begin{proof}[Proof of Proposition~\ref{P:optimality} when $\theta>0$]  We begin by considering a point $t_0$ where $L_P(t_0) \neq 0$. Given $\delta> 0$, we define sets $E$ and $F$ by
\begin{align*}
E &:= \{(s,x+sP(t_0)) : |s| < 1, \,\, x = \sum_{j=1}^{d-1}v_jP^{(j)}(t_0), \text{\,\,where\,\,} |v_j| < 2\delta^j\}\\
F &:= \{(t,y) : |t-t_0| < \delta,  \,\, y = \sum_{j=1}^{d-1}v_j P^{(j)}(t_0), \text{\,\,where\,\,} |v_j| < \delta^j\}.
\end{align*}
It is easy to see that 
$$
|E| = 2^d \delta^{\frac{d(d-1)}2}|L_P(t_0)|, \qquad \int_F d\rho(t)\, dy = \rho([t_0-\delta,t_0+\delta]) \delta^{\frac{d(d-1)}2}|L_P(t_0)|,
$$
and moreover that 
$$
\|\chi_F\|_{L^{q'}(L^{r'})} = (\delta^{\frac{d(d-1)}2}|L_P(t_0)|)^{\frac1{r'}} \delta^{\frac1{q'}}.
$$
Since $P$ is a polynomial of degree $N$, we have
\begin{equation} \label{E:taylorP}
P(t) = P(t_0) + \sum_{j=1}^N \frac{(t-t_0)^j}{j!} P^{(j)}(t_0).
\end{equation}
By Cramer's rule, we have for $d \leq j \leq N$ that
$$
P^{(j)}(t_0) = \sum_{i=1}^{d-1} \frac{\det(P',\ldots,P^{(i-1)},P^{(j)},P^{(i+1)},\ldots,P^{(d-1)})(t_0)}{\det(P',\ldots,P^{(d-1)})(t_0)}P^{(i)}(t_0).
$$
Hence by \eqref{E:taylorP}, if $\delta$ is sufficiently small and  $|t-t_0| < \delta$, we have that
\[
P(t) = P(t_0) + \sum_{j=1}^d v_j P^{(j)}(t_0),
\]
with $|v_j| < 2\delta^j$.  Therefore 
\beq \label{implies}
(t,y) \in F \qtq{and} |s| \leq 1 \quad \Rightarrow \quad (s,y+sP(t)) \in E.
\eeq
This in turn implies that 
\[
\rho([t_0-\delta,t_0+\delta]) \leq \int X^P\chi_E(t,y)\chi_F(t,y)\, d\rho(t)\, dy \leq C_d C |E|^{\frac{1}{p_\theta}} \|\chi_F\|_{L^{q_\theta'}(L^{r_\theta'})}.
\]
After some algebra, we obtain
\begin{eqnarray*}
\rho([t_0-\delta,t_0+\delta]) & \leq &C_d C |L_P(t_0)|^{\frac{1}{p_\theta}-\frac{1}{r_\theta}} \delta^{\frac{d(d-1)}{2p_\theta} - \frac{d(d-1)}{2r_\theta} + 1 - \frac{1}{q_\theta}} \\
& = & C_d C|L_P(t_0)|^{\frac{2\theta}{(d+2)(d-1)}} \delta.
\end{eqnarray*}

The proposition then follows from the observation that for \eqref{E:rho assumption} to hold, $\rho(\{t_0\})=0$ for every $t_0 \in \R$ (in particular for those points satisfying $L_P(t_0)=0$).  This may be proved similarly to the proof of the proposition when $\theta=0$, and we leave the details to the reader.
\end{proof}


\section{Jacobian estimates} \label{S:Jacobians}


One of the main steps in our proof of Theorem~\ref{T:main} will be to prove that the operators $X_{\theta}$ satisfy the restricted weak-type bounds corresponding to \eqref{E:strong p theta bound}.  We will establish these bounds by using Christ's method of refinements (cf.\ \cite{CCC, CE1, TW}), which involves proving lower bounds for the volumes of certain sets obtained by iterating.  In order to do this, we will need to prove lower bounds for the Jacobian determinants of the maps that arise when we iterate. 

Before we begin, we record the formula
\[
X_\theta^* (s,x) = \int_\R f(t,x-sP(t))|L_P(t)|^{\frac{2\theta}{(d+2)(d-1)}} dt.
\] 
Here we have omitted the superscript $P$ from the operator, as we will continue to do for the remainder of the article.

Given base points $(s_0,x_0), (t_0,y_0) \in \R^d$, we define maps $\Phi^k_{(s_0,x_0)},\Psi^k_{(t_0,y_0)}:\R^k \to \R^d$ ($k=1,2,\dots,d$) by
\begin{align}
\label{E:def Phi even}
&\Phi^{2K}_{(s_0,x_0)}(t_1,s_1,\dots,t_K,s_K)   = \bigl(s_K,x_0 - \sum_{j=1}^K(s_{j-1}-s_j)P(t_j)\bigr), \\
\label{E:def Phi odd} 
&\Phi^{2K+1}_{(s_0,x_0)}(t_1,s_1,\dots,t_{K+1})  = \bigl(t_{K+1},x_0 - \sum_{j=1}^K(s_{j-1}-s_j)P(t_j) -s_KP(t_{K+1})\bigr), \\
\label{E:def Psi even}
&\Psi^{2K}_{(t_0,y_0)}(s_1,t_1,\dots,s_K,t_K)  = \bigl(t_K,y_0 + \sum_{j=1}^Ks_j(P(t_{j-1}) - P(t_j))\bigr), \\
\label{E:def Psi odd}
&\Psi^{2K+1}_{(t_0,y_0)}(s_1,t_1,\dots,s_{K+1}) = \bigl(s_{K+1},y_0 + s_1P(t_0)- \sum_{j=1}^K (s_j-s_{j+1})P(t_j)\bigr).
\end{align}

The main goal of this section will be to establish the following proposition, which relates the Jacobian determinants of $\Phi^d_{(s_0,x_0)}, \Psi^d_{(t_0,y_0)}$ to the torsion, $L_P=\det(P',\ldots,P^{(d-1)})$.  

\begin{proposition} \label{P:jacobian bounds}  Let $d = 2D$ be an even integer and $P: \R \to \R^{d-1}$ a polynomial of degree $N$.  Then there exists a decomposition $\R = \bigcup_{j=1}^{C_{N,d}} I_j$ into disjoint intervals such that for each $j$, the following hold:\\
\emph{(i)}  If $(s_0,s_1,\dots,s_D) \in \R^{D+1}$ and $(t_0,t_1,\dots,t_D) \in I_j^{D+1}$,  then
\begin{align} \label{E:Psi even}
&|\det\bigl(D\Psi^d_{(t_0,y_0)}(s_1,t_1,\dots,s_D,t_D)\bigr)| \\ \notag
&\qquad  \gtrsim \prod_{i=1}^{D-1}\bigl\{|s_{i+1}-s_i||L_P(t_i)|^{\frac2d}\prod_{\stackrel{0 \leq j \leq D}{j\neq i}}|t_j-t_i|^2\bigr\} |L_P(t_0)|^{\frac1d}|L_P(t_D)|^{\frac1d}|t_D-t_0| \\
 \label{E:Phi even}
& \begin{aligned}
 &|\det\bigl(D\Phi^d_{(s_0,x_0)}(t_1,s_1,\dots,t_D,s_D)\bigr)| \\
 &\qquad \gtrsim \prod_{i=1}^D \bigl\{|s_i-s_{i-1}||L_P(t_i)|^{\frac2d}\prod_{\stackrel{1 \leq j \leq D}{j\neq i}}|t_j-t_i|^2\bigr\} 
 \end{aligned}
\end{align}
\emph{(ii)} There exist a constant $A_j \geq 0$, an integer $K_j \in [0,C_{d,N}]$, and a real number $b_j \notin \interior I_j$ such that
\begin{equation} \label{E:poly is mono}
|L_P(t)| \sim A_j |t-b_j|^{K_j}, \qtq{for every $t \in I_j$.}
\end{equation}

If $d=2D+1$ is odd, then analogous statements hold, only we must modify the bounds in \eqref{E:Psi even}, \eqref{E:Phi even} to
\begin{align} \label{E:Psi odd}
&\begin{aligned}
&|\det\bigl(D\Psi^d_{(t_0,y_0)}(s_1,t_1,\dots,s_{D+1})\bigr)| \\ 
&\qquad  \gtrsim \prod_{i=1}^D\bigl\{|s_{i+1}-s_i||L_P(t_i)|^{\frac2d}\prod_{\stackrel{0 \leq j \leq D}{j \neq i}}|t_j-t_i|^2\bigr\} |L_P(t_0)|^{\frac1d}
\end{aligned}\\
\label{E:Phi odd}
&\begin{aligned}
& |\det\bigl(D\Phi^d_{(s_0,x_0)}(t_1,s_1,\dots,t_{D+1})\bigr)|  \\ 
& \qquad \gtrsim \prod_{i=1}^D \bigl\{|s_i-s_{i-1}| |L_P(t_i)|^{\frac2d}\prod_{\stackrel{1 \leq j \leq D+1}{j \neq i}}|t_j-t_i|^2 \bigr\} |L_P(t_{D+1})|^{\frac1d}.
\end{aligned}
\end{align}
Here again, $t_i \in I_j$, while $s_i \in \R$.
\end{proposition}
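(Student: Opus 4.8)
The plan is first to strip away the $s$-variables, reducing each of the four Jacobian bounds to a single lower bound for a ``curve determinant'' built only from $P$ and $P'$ at the points $t_i$, and then to prove that bound by combining a column-reduction computation with a partition of $\R$ adapted to the Wronskians of $P$. For concreteness take $\Psi^d_{(t_0,y_0)}$ with $d=2D$ even. One of its coordinate functions (the first) is just $t_D$, so the corresponding row of $D\Psi^d$ is a coordinate vector; expanding along it reduces $|\det D\Psi^d_{(t_0,y_0)}|$ to the modulus of the $(d-1)\times(d-1)$ Jacobian of $(s_1,t_1,\dots,t_{D-1},s_D)\mapsto\sum_{j=1}^D s_j(P(t_{j-1})-P(t_j))\in\R^{d-1}$. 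The $s_j$-column of this Jacobian is $P(t_{j-1})-P(t_j)$ and the $t_i$-column is $(s_{i+1}-s_i)P'(t_i)$, so
\[
|\det D\Psi^d_{(t_0,y_0)}(s_1,t_1,\dots,s_D,t_D)|=\Bigl(\prod_{i=1}^{D-1}|s_{i+1}-s_i|\Bigr)\,\bigl|V(t_0,\dots,t_D)\bigr|,
\]
\[
V(t_0,\dots,t_D):=\det\bigl[P(t_0)-P(t_1),\,P'(t_1),\,P(t_1)-P(t_2),\,P'(t_2),\,\dots,\,P(t_{D-1})-P(t_D)\bigr].
\]
The same manipulation applied to $\Phi^d$ and to the two odd-dimensional maps yields the same shape of answer: a product of $|s_i-s_{i-1}|$'s times a curve determinant of the same type, only with a slightly different interleaving of difference- and derivative-columns and, in the odd case, one extra column of one of these two kinds. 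So everything reduces to showing, on each interval of a suitable finite partition of $\R$, that
\[
|V(t_0,\dots,t_D)|\gtrsim\prod_{i=1}^{D-1}\Bigl\{|L_P(t_i)|^{2/d}\!\!\prod_{\stackrel{0\le j\le D}{j\ne i}}\!\!|t_j-t_i|^2\Bigr\}\,|L_P(t_0)|^{1/d}|L_P(t_D)|^{1/d}|t_D-t_0|
\]
together with its three analogues.

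For the partition I would appeal to the normalization theory for polynomial curves (cf.\ \cite{DW}): $\R$ may be split into $C_{N,d}$ intervals on each of which all of the relevant minors of the matrix with columns $P',\dots,P^{(d-1)}$ --- in particular $L_P=\det(P',\dots,P^{(d-1)})$ itself --- are comparable to monomials $A|t-b|^K$ with $b$ outside the interior of the interval. The case of $L_P$ gives conclusion (ii) at once, and the full package of monomial comparisons is what makes the curve determinant tractable: after an affine change of variables the restriction of $P$ to such an interval is a perturbation of the moment curve, for which --- by a column-reduction computation analogous to the evaluation of a Vandermonde determinant, which I would carry out first as a model --- the curve determinant $V$ is, up to sign and a harmless constant, exactly the right-hand side above with $L_{P_0}$ constant.

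On a fixed interval of the partition the plan is to expand each difference $P(t_{j-1})-P(t_j)$ by the finite Taylor formula and, exactly as in the proof of Proposition~\ref{P:optimality}, use Cramer's rule to rewrite $P^{(m)}$ for $m\ge d$ in terms of $P',\dots,P^{(d-1)}$; this presents $V$ as a polynomial in the differences $t_i-t_j$ whose coefficients are built from the minors of the Wronskian matrix of $P$. The monomial comparisons then show that on the interval the leading term of this expansion dominates all others, so $|V|$ is comparable to that term, which, after unwinding the normalization, is precisely the claimed product of $t$-differences and powers of $L_P$. (Alternatively one may view $V$ as a confluent generalized Vandermonde determinant for the polynomial curve $P$ and invoke the known factorization of such determinants.) The odd-dimensional bounds \eqref{E:Psi odd}, \eqref{E:Phi odd} go through the same way; the one extra column there supplies the extra endpoint factor $|L_P(t_0)|^{1/d}$ or $|L_P(t_{D+1})|^{1/d}$ and accounts for the absence of a $|t_D-t_0|$ factor.

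The main obstacle is the last step: ruling out cancellation in $V$ for a \emph{general} polynomial curve, uniformly in $P$ of degree $N$. This is exactly what forces the partition into $O_{N,d}(1)$ pieces, and although it is handled by the normalization machinery, the bookkeeping --- in particular checking that the normalization produces precisely the split ``$L_P^{2/d}$ at the interior nodes and $L_P^{1/d}$ at the two endpoint nodes'' and reconciling this with the monomial description in (ii) --- is where the genuine work lies.
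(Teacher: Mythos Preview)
Your reduction to the curve determinant $V$ is correct and matches the paper's Lemma~\ref{L:jacobian formulae} exactly: strip out the $s$-factors and one is left with a $(d-1)\times(d-1)$ determinant whose columns alternate between differences $P(t_{j-1})-P(t_j)$ and derivatives $P'(t_i)$.  Where you diverge is in the lower bound for $|V|$.  You propose to Taylor-expand the differences, rewrite higher derivatives via Cramer's rule, and then invoke the full minor-by-minor normalization from \cite{DW} to show the leading term dominates.  This is feasible --- the machinery in \cite{DW} is precisely designed to make such arguments go through --- but, as you anticipate, the bookkeeping (in particular extracting the exact $2/d$-vs-$1/d$ split between interior and endpoint nodes) is where the real work would sit.

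The paper sidesteps all of this with a single device: lift $P$ to the $d$-dimensional curve $Q(t)=(t,\int P)$, for which $L_Q=L_P$.  Adding a column of $1$'s and $0$'s and expanding, one checks that
\[
V(t_0,\dots,t_D)=\pm\Bigl(\prod_{j=D+1}^{2D-1}\partial_j\big|_{t_j=t_{j-D}}\Bigr)\det\bigl(Q'(t_0),\dots,Q'(t_{2D-1})\bigr).
\]
Now apply Theorem~\ref{T:DW} directly to $Q$: it furnishes the partition and, on each piece, the antisymmetric factorization
\[
\det\bigl(Q'(t_0),\dots,Q'(t_{2D-1})\bigr)=\Bigl(\prod_{0\le i<j\le 2D-1}(t_j-t_i)\Bigr)\,J(t_0,\dots,t_{2D-1}),\qquad |J|\gtrsim\prod_k|L_P(t_k)|^{1/d}.
\]
Applying the product rule, every term vanishes at $t_j=t_{j-D}$ except the one in which $\partial_j$ kills the factor $(t_j-t_{j-D})$; what survives is exactly the claimed product.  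The $|L_P|^{2/d}$ at interior nodes versus $|L_P|^{1/d}$ at $t_0,t_D$ that you flagged as the delicate point falls out for free: each interior $t_i$ occurs twice in the confluent limit (as $t_i$ and as $t_{i+D}$), the endpoints once.  So the paper's route reduces the whole proposition to one invocation of \cite{DW} applied to $Q$, with no normalization bookkeeping beyond that; your route would reprove a confluent version of that theorem by hand.
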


For the proof, we will find alternative expressions for the Jacobian determinants and then apply the following theorem from \cite{DW}.

\begin{theorem}[\cite{DW}] \label{T:DW}  If $Q:\R \to \R^d$ is a polynomial of degree $N$, then there exists a decomposition $\R = \bigcup_{j=1}^{C_{N,d}} I_j$ into disjoint intervals such that for each $j$, the following hold\\
(i)  If $(t_1,\dots,t_d) \in I_j^d$, then 
$$
|\det\bigl(Q'(t_1),\dots,Q'(t_d)\bigr)| \gtrsim \prod_{j=1}^d |L_Q(t_j)|^{\frac1d}\prod_{j < k}|t_j-t_k|.
$$
(ii)  There exist a constant $A_j \geq 0$, an integer $K_j \in [0,C_{d,N}]$, and a real number $b_j \notin \interior I_j$ such that 
$$
|L_Q(t)| \sim A_j |t-b_j|^{K_j}, \quad \text{for every $t \in I_j$.}
$$
\end{theorem}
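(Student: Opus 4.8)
The plan is to establish part~(ii) first --- it is essentially a degree count together with an elementary flattening lemma for real polynomials --- and then to deduce part~(i) by using the invariance of $L_Q$ under the left action of $GL(d,\R)$ and under affine reparametrization, together with the monomial structure from~(ii), to reduce the geometric inequality to a compact family of normalized curves, on which it is proved by induction on the dimension $d$.

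For~(ii): the torsion $L_Q=\det(Q',\dots,Q^{(d)})$ is a polynomial of degree at most $\sum_{k=1}^{d}(N-k)=dN-\binom{d+1}{2}\le C_{N,d}$, and the degenerate case $L_Q\equiv0$ is allowed and trivial (one may take $A_j=0$). It therefore suffices to prove the following flattening lemma: every nonzero real polynomial $p$ of degree $n$ admits a partition of $\R$ into at most $C_n$ intervals on each of which $|p(t)|\sim A|t-b|^{K}$ for some $A>0$, some $K\in\{0,\dots,n\}$, and some $b\notin\interior I$. Writing $p(t)=c\prod_{i=1}^{n}(t-z_i)$ over $\C$, one partitions $\R$ using the distinct values among the real parts $\operatorname{Re} z_i$, their pairwise midpoints, and, around each such real part $a$, the dyadic scales generated by the distances $|a-\operatorname{Re} z_j|$ and by the moduli $|\operatorname{Im} z_i|$. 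On each resulting interval $I$, every factor $|t-z_i|$ is comparable on $I$ either to a constant or to $|t-b|$ for a single common base point $b=\operatorname{Re} z_i\notin\interior I$; taking $K$ to be the number of factors of the second kind gives the lemma.

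For~(i): one first refines the decomposition from~(ii) so that, for each $k=1,\dots,d$, the wedge magnitude $|Q'(t)\wedge\cdots\wedge Q^{(k)}(t)|$ is comparable on $I_j$ to a monomial $|t-b_j|^{m_k}$ with the \emph{same} base point $b_j$ --- this costs only a bounded number of further subdivisions, the squared wedge magnitudes being polynomials to which the flattening lemma applies. Fix such an interval $I$, translate $b_j$ to the origin, rescale the parameter, and apply a suitable $B\in GL(d,\R)$; under these operations both sides of~(i) scale in the same way (one has $L_{BQ\circ\phi}=\det(B)\,(\phi')^{\binom{d+1}{2}}\,L_Q\circ\phi$ and $\det(Q'(t_1),\dots,Q'(t_d))$ transforms compatibly), so it suffices to treat a compact family of curves on which all wedge magnitudes are $\sim|t|^{m_k}$ on a fixed interval. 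When the interval stays away from the origin the torsion is then comparable to a constant, and since the components of $Q'$ form an extended complete Chebyshev system on such an interval (their Wronskian being $\pm L_Q\neq0$), the matrix with columns $Q'(t_1),\dots,Q'(t_d)$ is nonsingular with determinant of a sign fixed by the ordering, for all distinct $t_i$; hence the symmetric polynomial quotient $\mathcal D(t_1,\dots,t_d):=\det(Q'(t_1),\dots,Q'(t_d))/\prod_{i<j}(t_j-t_i)$ is continuous and nonvanishing on the relevant closed region (it equals $(\prod_{k=1}^{d-1}k!)^{-1}L_Q\neq0$ on the diagonal), hence bounded below by compactness, which yields~(i) on such intervals.

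The remaining case, and the step I expect to be the main obstacle, is an interval $I$ adjacent to the base point, where the torsion vanishes there and the points $t_1,\dots,t_d$ may be spread across many dyadic shells $\{|t|\sim 2^{-l}\}$. Here the plan is an induction on $d$: fixing one of the points, say $t_1$, write $\det(Q'(t_1),\dots,Q'(t_d))=\pm|Q'(t_1)|\,\det\nolimits_{d-1}(\overline{Q'}(t_2),\dots,\overline{Q'}(t_d))$ with $\overline{\,\cdot\,}$ the orthogonal projection onto $Q'(t_1)^{\perp}\cong\R^{d-1}$; since $\overline{Q'}(t_1)=0$, multilinearity unfolds the $(d-1)$-dimensional determinant into an iterated integral of $\det\nolimits_{d-1}(\overline{Q''}(u_2),\dots,\overline{Q''}(u_d))$ over $u_j$ between $t_1$ and $t_j$, to which the inductive hypothesis applies with the $(d-1)$-dimensional curve $\overline{Q''}$. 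The difficulties are entirely in the bookkeeping: one must control the family $\{\overline{Q''}:t_1\in I\}$ uniformly (it again lies in a compact normalized family, by the refined~(ii)), identify the torsion of $\overline{Q''}$ with a quantity comparable on $I$ to $L_Q$ while tracking how the base point and the multiplicities $m_k$ transform, and verify that once the monomial torsion factors and the Vandermonde factor $\prod_{i<j}|t_i-t_j|$ have been pulled out, the exponent arithmetic closes and the residual scale-invariant quantity --- with coalescences of the $t_i$ and escape to $0$ and $\infty$ already accounted for --- is continuous and nonvanishing on a compact set, hence bounded below by a constant depending only on $d$ and $N$. Undoing the normalizations then gives~(i) on each $I_j$ with constants depending on $d$ and $N$ alone.
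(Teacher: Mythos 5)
First, a point of order: the paper does not prove this statement. Theorem~\ref{T:DW} is imported verbatim from \cite{DW} and used as a black box, so there is no in-paper proof to compare yours against; I can only measure your proposal against the argument of Dendrinos and Wright themselves. In broad outline you have reconstructed their architecture: a flattening lemma putting $|L_Q|$ and the lower-order wedge/minor magnitudes into monomial form $A|t-b|^K$ on boundedly many intervals, affine and reparametrization invariance to normalize, and an induction on $d$ via projection and iterated integrals to handle intervals abutting the degenerate base point. Your part (ii), including the root-clustering proof of the flattening lemma, is essentially complete and correct.

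The gap is in part (i), and it is not bookkeeping. Consider $Q(t)=(\tfrac{t^3}{3}-t,\tfrac{t^2}{2})$ in $d=2$: then $L_Q(t)=-(1+t^2)$ and $|Q'(t)|^2=t^4-t^2+1$ are both comparable to constants on $[-2,2]$, so your refined decomposition may legitimately leave an interval containing $\pm1$ intact; yet $\det(Q'(t_1),Q'(t_2))=(t_1-t_2)(t_1t_2+1)$ vanishes at $(t_1,t_2)=(1,-1)$ while the right-hand side of (i) equals $4$ there. This shows two things. First, nonvanishing of the top Wronskian $L_Q$ on an interval does \emph{not} make the components of $Q'$ an extended complete Chebyshev system there, so the single-signedness of $\det(Q'(t_1),\dots,Q'(t_d))$ is not available from your hypotheses; one needs all leading Wronskian minors to be single-signed in coordinates adapted to the interval, which forces a further decomposition that your reduction does not produce. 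Second, and more seriously, the concluding step --- that the quotient $\mathcal D$ of the determinant by the Vandermonde and the monomial torsion factors is ``continuous and nonvanishing on the relevant closed region, hence bounded below by compactness'' --- is circular: that nonvanishing \emph{is} the theorem, the example above shows it can fail even on intervals where all wedge magnitudes are constant, and near the base point the configuration space of $d$-tuples spread over arbitrarily many dyadic shells is not compact after rescaling. The confluent-Vandermonde identity gives nonvanishing of $\mathcal D$ on the diagonal for a fixed curve, which yields the inequality for a fixed $Q$ on a fixed interval with constants depending on $Q$; the uniformity in $Q$ (constants depending only on $N$ and $d$) is precisely what occupies most of \cite{DW}, whose induction tracks explicit quantitative lower bounds for the projected $(d-1)$-dimensional determinants rather than invoking compactness. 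As written, your proposal does not close this step.
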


We record some useful formulae here.

\begin{lemma} \label{L:jacobian formulae}
Let $Q(t) = (t,\int P(t)\, dt)$ be an antiderivative of $(1,P)$.  If $d=2D$ is even, then 
\begin{align} 
\label{E:DPsi even}
&\begin{aligned}
& \det\bigl(D\Psi^d_{(t_0,y_0)}(t_1,s_1,\dots,t_D,s_D)\bigr) = \\
&\qquad \pm \bigl\{\prod_{i=1}^{D-1}(s_{i+1}-s_i)\bigr\}\bigl\{\prod_{j=D+1}^{2D-1}\partial_j|_{t_j = t_{j-D}}\bigr\} \det\bigl(Q'(t_0),\dots,Q'(t_{2D-1})\bigr)
\end{aligned}\\
\label{E:DPhi even}
&\begin{aligned}
& \det\bigl(D\Phi^d_{(s_0,x_0)}(t_1,s_1,\dots,t_D,s_D)\bigr) = \\
 &\qquad \pm \bigl\{\prod_{i=1}^D(s_i-s_{i-1})\bigr\}\bigl\{\prod_{j=D}^{2D} \partial_j|_{t_j = t_{j-D}}\bigr\} \det\bigl(Q'(t_1),\dots,Q'(t_{2D})\bigr).
 \end{aligned}
\end{align}
If $d=2D+1$ is odd, then
\begin{align} 
\label{E:DPsi odd}
&\begin{aligned}
&\det\bigl(D\Psi^d_{(t_0,y_0)}(s_1,t_1,\dots,s_D,t_D,s_{D+1})\bigr) = \\
 &\qquad \pm \bigl\{\prod_{i=1}^D(s_{i+1}-s_i)\bigr\} \bigl\{\prod_{j=D+1}^{2D}\partial_j|_{t_j = t_{j-D}}\bigr\}  \det\bigl(Q'(t_0),\dots,Q'(t_{2D})\bigr) 
 \end{aligned}\\
\label{E:DPhi odd}
&\begin{aligned}
& \det\bigl(D\Phi^d_{(s_0,x_0)}(t_1,s_1,\dots,t_D,s_D,t_{D+1})\bigr) = \\
&\qquad \pm \bigl\{\prod_{i=1}^D(s_i-s_{i-1})\bigr\} \bigl\{\prod_{j=D+2}^{2D+1} \partial_j|_{t_j = t_{j-D-1)}}\bigr\} \det(Q'(t_1),\dots,Q'(t_{2D+1})).
\end{aligned}
\end{align}
\end{lemma}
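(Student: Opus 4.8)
The plan is to prove all four identities by the same direct computation, carried out once in detail for $\Psi^{2D}$ and then transcribed to the other three cases. Differentiating the defining formula \eqref{E:def Psi even}, one finds that the columns of the $d\times d$ matrix $D\Psi^{2D}_{(t_0,y_0)}$ are as follows: the $\partial_{s_j}$-column equals $(0,\,P(t_{j-1})-P(t_j))$; the $\partial_{t_j}$-column equals $(0,\,(s_{j+1}-s_j)P'(t_j))$ for $1\le j\le D-1$; and the $\partial_{t_D}$-column equals $(1,\,-s_DP'(t_D))$, where the leading entry records the coefficient in the $t_D$-output coordinate. Only the column $\partial_{t_D}$ is nonzero in the first row, so expanding the determinant along that row deletes this column and leaves a $(d-1)\times(d-1)$ determinant; factoring $s_{i+1}-s_i$ out of the $\partial_{t_i}$-column for $1\le i\le D-1$ then exhibits $\det D\Psi^{2D}_{(t_0,y_0)}$ as $\pm\prod_{i=1}^{D-1}(s_{i+1}-s_i)$ times a determinant whose columns, read in order, form the alternating list $P(t_0)-P(t_1),\ P'(t_1),\ P(t_1)-P(t_2),\ P'(t_2),\ \dots,\ P'(t_{D-1}),\ P(t_{D-1})-P(t_D)$.

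It then remains to recognize this last determinant as $\bigl\{\prod_{j=D+1}^{2D-1}\partial_j|_{t_j=t_{j-D}}\bigr\}\det(Q'(t_0),\dots,Q'(t_{2D-1}))$, up to sign. Here $Q'(t)=(1,P(t))$ and $Q''(t)=(0,P'(t))$, and since each variable $t_j$ appears in exactly one column of the master determinant, applying $\partial_{t_{D+1}}\cdots\partial_{t_{2D-1}}$ merely replaces $Q'(t_{D+1}),\dots,Q'(t_{2D-1})$ by $Q''(t_{D+1}),\dots,Q''(t_{2D-1})$, and the substitutions $t_j=t_{j-D}$ turn these into $Q''(t_1),\dots,Q''(t_{D-1})$. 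Performing the column operations $Q'(t_i)\mapsto Q'(t_i)-Q'(t_{i+1})$ for $0\le i\le D-1$, leaving $Q'(t_D)=(1,P(t_D))$ untouched, turns the first row into a single nonzero entry located in the $Q'(t_D)$-column; expanding along it produces exactly the alternating column list above, after a harmless reordering. This proves \eqref{E:DPsi even}.

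The identities \eqref{E:DPhi even}, \eqref{E:DPsi odd}, and \eqref{E:DPhi odd} follow from the identical argument, the differences being purely combinatorial. The distinguished first output coordinate of $\Phi^{2D}$, $\Psi^{2D+1}$, $\Phi^{2D+1}$ is $s_D$, $s_{D+1}$, $t_{D+1}$ respectively, so a different column carries the $1$ in the first row and is the one expanded away; the leftover scalars come out as $\prod(s_i-s_{i-1})$ over the ranges indicated; and the differential operators $\partial_j$, together with their substitutions $t_j=t_{j-D}$ or $t_j=t_{j-D-1}$, act on the shifted index ranges appearing in the statement, in each case converting as many columns of the relevant master determinant into $Q''$'s as there are $P'$-columns on the direct-computation side, and leaving alone exactly the $Q'$-column that supplies the surviving $1$. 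Since the statement only asserts equality up to sign, one need not track signs. There is no new idea involved; the sole place an error could slip in is the case-by-case matching of the two column lists against each other, so those four verifications should be done carefully.
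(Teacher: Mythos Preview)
Your proposal is correct and follows essentially the same approach as the paper's own proof. Both arguments compute the Jacobian directly, expand along the distinguished first output coordinate, factor out the $s$-differences, and then identify the remaining $(d-1)\times(d-1)$ determinant with the differentiated master determinant via elementary row/column operations; the paper writes vectors as rows and borders the reduced determinant up to the $Q'$-matrix, while you write vectors as columns and reduce the $Q'$-matrix down, but these are the same maneuver read in opposite directions.
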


\begin{proof}[Proof of Lemma~\ref{L:jacobian formulae}] 
We will give the proof of \eqref{E:DPsi even} only, the remaining formulas having similar derivations.  We compute:
\begin{align*}
& \det\bigl(D\Psi^d_{(t_0,y_0)}(t_1,s_1,\dots,t_D,s_D)\bigr) = \left|\begin{array}{cc}  0 & (s_1-s_2)P'(t_1) \\ \vdots & \vdots \\ 0 & (s_{D-1}-s_D)P'(t_{D-1}) \\ 1 & s_DP'(t_D) \\ 0 & P(t_1) - P(t_0) \\ \vdots & \vdots \\ 0 & P(t_D) - P(t_{D-1})  \end{array}\right| \\
& \qquad    = \pm \bigl\{\prod_{i=1}^{D-1}(s_{i+1}-s_i)\bigr\}\left|\begin{array}{c} P(t_1) - P(t_0) \\ \vdots \\  P(t_D) - P(t_{D-1}) \\ P'(t_1) \\ \vdots \\ P'(t_{D-1}) \end{array}\right| \\
&\qquad = \pm \bigl\{\prod_{i=1}^{D-1}(s_{i+1}-s_i)\bigr\} \left|\begin{array}{cc} 1 & P(t_0) \\ \vdots & \vdots \\ 1 & P(t_D) \\ 0 & P'(t_1) \\ \vdots & \vdots \\ 0 & P'(t_{D-1}) \end{array}\right| \\
&\qquad = \pm \bigl\{\prod_{i=1}^{D-1}(s_{i+1}-s_i)\bigr\} \bigl\{\prod_{j=D+1}^{2D-1} \partial_j|_{t_j = t_{j-D}}\bigr\} \left|\begin{array}{cc} 1 & P(t_0) \\ \vdots & \vdots \\ 1 & P(t_{2D-1}) \end{array}\right|. 
\end{align*}
\end{proof}

Now we are ready to begin the proof of Proposition~\ref{P:jacobian bounds}.  

\begin{proof}[Proof of Proposition~\ref{P:jacobian bounds}]  We will give only the proof of \eqref{E:Psi even}.  It suffices to establish a lower bound for 
\begin{equation} \label{E:DQ}
\bigl\{\prod_{j=D+1}^{2D-1} \partial_j|_{t_j = t_{j-D}}\bigr\} \det\bigl(Q'(t_0),\dots,Q'(t_{2D-1})\bigr).
 \end{equation}
Since the determinant on the right of \eqref{E:DQ} is an anti-symmetric polynomial, it may be factorized as 
\begin{equation} \label{E:dif1}
\det\bigl(Q'(t_0),\dots,Q'(t_{2D-1})\bigr) = 
\bigl\{\prod_{0\le i<j\le2D-1}(t_j-t_i)\bigr\} J(t_0,\ldots ,t_{2D-1}), 
\end{equation}
for some other polynomial $J$ of $2D$ variables.  Additionally, by Theorem~\ref{T:DW} and the trivial identity $L_Q = L_P$, after decomposing $\R = \bigcup_{j=1}^{C_{N,d}}$, for $(t_0,\dots,t_{2D-1}) \in I_j^d$, we have the lower bound
$$
|\det\bigl(Q'(t_0),\dots,Q'(t_{2D-1})\bigr)| \gtrsim \prod_{j=0}^{2D-1} |L_P(t_j)|^{\frac1d} \prod_{j<k}|t_j-t_k| .
$$
Thus the polynomial $J$ in \eqref{E:dif1} obeys
\begin{equation} \label{E:lb J}
|J(t_0,\dots,t_{2D-1})| \gtrsim \prod_{j=0}^{2D-1} |L_P(t_j)|^{\frac1d}.
\end{equation}

Now we fix $D+1 \leq j \leq 2D-1$ and consider a single derivative from \eqref{E:DQ},
$$
\partial_j |_{t_j = t_{j-D}} \det\bigl(Q'(t_0),\ldots,Q'(t_{2D-1})\bigr).
$$
By \eqref{E:dif1} and the product rule, this is a sum of $\frac{d(d-1)}2 + 1$ terms, one for each of the linear factors in \eqref{E:dif1} and an additional one for $J$.  But it is clear that the only one of these terms that is nonzero is the one in which $\partial_j$ eliminates the $(t_j-t_{j-D})$ factor before the evaluation.  Hence the quantity in \eqref{E:DQ} is equal to
\begin{align*}
\pm \bigl\{\prod_{i=1}^{D-1}(t_D-t_i)^2(t_i-t_0)^2 \prod_{j=i+1}^{D-1}(t_j-t_i)^4\bigr\} (t_D-t_0) J (t_0,\ldots,t_D,t_1,\ldots,t_{D-1}).
\end{align*}
Using this together with \eqref{E:lb J} and \eqref{E:DPsi even}, the proof of Proposition~\ref{P:jacobian bounds} is complete.
 \end{proof}

The lower bounds in Proposition~\ref{P:jacobian bounds} together with the invariances in Lemma~\ref{L:invariance} allow us to make some reductions before we attempt to prove Theorem~\ref{T:main}.

\begin{lemma} \label{L:reductions}
In proving Theorem~\ref{T:main}, it suffices to consider the truncated operator $\tilde X_{\theta}$ given by
\begin{equation} \label{E:new X}
\tilde X_{\theta} f(t,y) = \int_{\R} f(s,y+sP(t))|t|^{\frac{2K\theta}{(d+2)(d-1)}}\chi_I(t)\, ds, \qquad I \subset [0,1],
\end{equation}
where $P$ is a polynomial of degree $N$ such that for $t \in I$ we have 
$$
|L_P(t)| \sim |t|^K,
$$
with $K \in [0,C_{d,N}]$ an integer.  We may further assume that on $\R \times I \times \R \times \cdots$, either \eqref{E:Psi even} or \eqref{E:Psi odd} holds, and that on $I \times \R \times I \times \cdots$, either \eqref{E:Phi even} or \eqref{E:Phi odd} holds, depending on whether $d$ is even or odd.
\end{lemma}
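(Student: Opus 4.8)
The plan is to reduce Theorem~\ref{T:main} to the model operator $\tilde X_\theta$ in \eqref{E:new X} by successively peeling off the three sources of nonuniformity: the partition of $\R$ into monotonicity intervals from Proposition~\ref{P:jacobian bounds}, the affine/reparametrization freedom from Lemma~\ref{L:invariance}, and the scaling that normalizes each piece.

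First I would invoke Proposition~\ref{P:jacobian bounds} to obtain the decomposition $\R = \bigcup_{j=1}^{C_{N,d}} I_j$ on whose pieces the Jacobian lower bounds \eqref{E:Psi even}--\eqref{E:Phi odd} and the monotonicity statement \eqref{E:poly is mono} both hold. Since $C_{N,d}$ depends only on $d$ and $N$, it suffices — by the triangle inequality for $X_\theta$ in the $(t,y)$ variable (and, for \eqref{E:weak p1 bound}, by splitting $F$ accordingly) — to prove the estimate for each operator $X_\theta \chi_{\Pi^{-1}(I_j)}$ separately, losing only the constant $C_{N,d}$. Fix one such interval $I_j$, write it as $I$, and let $b = b_j$, $K = K_j$, $A = A_j$ be the associated data, so that $|L_P(t)| \sim A|t-b|^K$ for $t \in I$ and $b \notin \interior I$.

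Next I would use Lemma~\ref{L:invariance}. Translating $t$ by $b$ (a diffeomorphism $\phi$) we may assume $b = 0$, so $0 \notin \interior I$ and $|L_P(t)| \sim A|t|^K$ on $I$; since $0 \notin \interior I$, $I$ lies in a half-line, and reflecting $t \mapsto -t$ if necessary we may take $I \subset [0,\infty)$. If $I$ is bounded, a dilation $t \mapsto \la t$ rescales $I$ into $[0,1]$; the weight $|L_P(t)|^{\frac{2\theta}{(d+2)(d-1)}} \sim A|t|^{\frac{2K\theta}{(d+2)(d-1)}}$ transforms by a constant power of $\la$, and the overall constant $A^{\frac{2\theta}{(d+2)(d-1)}}$ together with that power of $\la$ can be absorbed by a further affine map $A$ acting on the $y$-variable (which Lemma~\ref{L:invariance} shows leaves the operator norm unchanged, up to the explicit factor there), so we may replace the weight by exactly $|t|^{\frac{2K\theta}{(d+2)(d-1)}}\chi_I(t)$ with $I \subset [0,1]$. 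The one point needing a brief remark is the possibility that $I$ is unbounded (a half-line): since $P$ is a polynomial of degree $N$, its torsion $L_P$ is a polynomial, so only finitely many of the $I_j$ can be unbounded, and on such a piece one may first intersect with a dyadic decomposition $\{|t| \sim 2^k\}$, rescale each dyadic piece into $[\tfrac12,1]\subset[0,1]$, and observe that the resulting estimates, being uniform in $k$ by scaling (this is exactly the scaling identity recorded after \eqref{E:pqr3}, which forces equality in \eqref{E:pqr1}--\eqref{E:pqr2}), sum up; alternatively one simply notes that the unbounded pieces can be handled by the same scaling argument as the global operator $X^{P_0}$ and so reduce to the bounded case. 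Either way, after these reductions we have the operator $\tilde X_\theta$ of \eqref{E:new X}. Finally, since Lemma~\ref{L:invariance} is an \emph{exact} identity of operator norms and the change of variables leaves the class of polynomials of degree $N$ invariant, the Jacobian bounds \eqref{E:Psi even}/\eqref{E:Psi odd} and \eqref{E:Phi even}/\eqref{E:Phi odd} — which are statements about $\Psi^d, \Phi^d$ built from the (transformed) $P$ on the (transformed) $I$ — continue to hold for the reduced $P$ on $I$; this gives the last sentence of the lemma.

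The step I expect to be the main obstacle is the bookkeeping around unbounded intervals $I_j$ and the precise matching of powers of the dilation parameter against the affine $y$-rescaling, i.e.\ checking that scaling is genuinely compatible with the exponent relations \eqref{E:pqr1}--\eqref{E:pqr2} holding with equality, so that no net power of $\la$ survives and the constant really is uniform; everything else is a routine, if slightly tedious, application of Lemma~\ref{L:invariance} and Proposition~\ref{P:jacobian bounds}.
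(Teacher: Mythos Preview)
Your overall strategy---decompose via Proposition~\ref{P:jacobian bounds}, then normalize each piece using Lemma~\ref{L:invariance}---is exactly the paper's, and the bulk of your argument is fine. There is, however, a genuine gap in your treatment of unbounded intervals $I_j$.

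You propose to dyadically decompose an unbounded $I_j$ into pieces $\{t\sim 2^k\}$, rescale each into $[\tfrac12,1]$, and then claim the resulting estimates ``sum up'' because they are uniform in $k$. But uniformity is not enough: the $t$-supports are disjoint, so $\|\sum_k X_k f\|_{L^{q_\theta}(L^{r_\theta})}^{q_\theta} = \sum_k \|X_k f\|_{L^{q_\theta}(L^{r_\theta})}^{q_\theta}$, and bounding each summand by $C\|f\|_{L^{p_\theta}}^{q_\theta}$ gives an infinite sum on the right. The scaling identity you cite only explains why the exponents must lie on the line of equality in \eqref{E:pqr1}--\eqref{E:pqr2}; it does not supply any decay or orthogonality in $k$ on the input side. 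Your fallback (``handled by the same scaling argument as the global operator $X^{P_0}$'') is too vague to fill the gap, since rescaling $t$ changes the polynomial $P$ and one would then need the very uniformity one is trying to prove.

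The paper's fix is much simpler and you should adopt it: first reduce to $f\ge 0$ (harmless for both \eqref{E:strong p theta bound} and \eqref{E:weak p1 bound}), and then, for an unbounded $I_j$, apply the monotone convergence theorem to replace $I_j$ by an increasing sequence of bounded subintervals. Once $I_j$ is bounded, your affine/dilation normalization goes through. One further small point: to kill the constant $A_j$, the cleanest move (and the paper's) is to multiply $P$ itself by a scalar---this is a special case of the affine map $A$ in Lemma~\ref{L:invariance}---rather than trying to match powers of the dilation parameter against a separate $y$-rescaling.
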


\begin{proof}  
Obviously, in proving \eqref{E:strong p theta bound}, we may assume that $f \geq 0$.  By the triangle inequality, it suffices to bound each of the operators $X_{\theta}^{(j)}$ given by
$$
X_{\theta}^{(j)}f(t,y) = \int_{\R} f(s,y+sP(t))|L_P(t)|^{\frac{2\theta}{(d+2)(d-1)}}\chi_{I_j}(t)\, ds,
$$
with $I_j$ one of the intervals in the decomposition in Proposition~\ref{P:jacobian bounds}.  By the monotone convergence theorem (since $f \geq 0$), in bounding $X_{\theta}^{(j)}$, we may assume that $I_j$ is a bounded interval.  Next, by reparametrizing (linearly) in $t$ and applying Lemma~\ref{L:invariance}, we may assume that $I_j \subset [0,1]$ and that $b_j = 0$.  Multiplying $P$ by a constant and using Lemma~\ref{L:invariance} again, we may assume that $A_j = 1$.  Finally, since $f \geq 0$ and $|L_P(t)| \sim |t|^K$ (after all of our reductions), we may replace the weight $|L_P(t)|^{\frac{2\theta}{(d+2)(d-1)}}$ with $|t|^{\frac{2K\theta}{(d+2)(d-1)}}$.  This completes the proof.
\end{proof}

As it suffices to establish bounds for $\tilde X_{\theta}$, we will work with these operators from here forward and drop the $\tilde{\,}\,$'s from our notation.  We note that under this reduction, the adjoint of $X_{\theta}$ is given by
\begin{equation} \label{E:X*}
X_{\theta}^* g(s,x) = \int_{I} g(t,x-sP(t)) |t|^{\frac{2K\theta}{(d+2)(d-1)}}\, dt .
\end{equation}


\section{The restricted weak-type bounds} \label{S:RWT}


The main goal of this section is to prove a restricted weak-type version of Theorem~\ref{T:main}.  Our proof is similar to the proof of the restricted weak-type bound at the endpoint $\theta = 1$ given in \cite{CE1}, but we must make some modifications to deal with the differences in the operators.

We recall the quantity 
$$
\theta_0 = \tfrac{(d+2)(d-1)}{d(d+1)},
$$
which is the unique value of $\theta$ such that $q_\theta = r_\theta$.

\begin{proposition} \label{P:rwt}  Let $d \geq 3$ and let $P:\R \to \R^{d-1}$ be a polynomial of degree $N$.  Then for $\theta_0 \leq \theta \leq 1$, $X_{\theta}$ satisfies the restricted  weak-type bound
\begin{equation} \label{E:rwt}
\langle X_{\theta} \chi_E,\chi_F \rangle \lesssim |E|^{\frac1{p_{\theta}}} \|\chi_F\|_{L^{q_{\theta}'}(L^{r_{\theta}'})}
\end{equation}
for all measurable $E,F \subset \R^d$.
\end{proposition}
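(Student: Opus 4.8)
The plan is to use Christ's method of refinements applied to the adjoint operator. First I would dualize: the bound \eqref{E:rwt} is equivalent to $\langle X_\theta^* \chi_F, \chi_E\rangle \lesssim |E|^{1/p_\theta}|F_{\rho}|^{\cdots}$ where $|F|$ is measured against the iterated weight; more precisely, after fixing $E, F$ with $\langle X_\theta\chi_E,\chi_F\rangle = \lambda$, a pigeonholing/restriction argument (standard in \cite{CE1, TW}) lets us pass to subsets on which the relevant quantities — the measure of the fiber of $E$ over each $s$, the measure of the fiber of $F$ over each $t$, the weight $|t|^{2K\theta/((d+2)(d-1))}$ localized to a dyadic range, and the ``bush sizes'' counting how many $(t,y)\in F$ see a given point of $E$ and vice versa — are all essentially constant. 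This reduces \eqref{E:rwt} to a bound of the form $\lambda^{d} \lesssim \alpha^a \beta^b |E|^c \|\chi_F\|^{e}_{L^{q'_\theta}(L^{r'_\theta})}$ for the localized parameters, where $\alpha,\beta$ are the fiber measures and the total number of constraints matches $d = \dim$.

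Next I would run the iteration. Starting from a point of $F$ (or $E$, depending on parity of $d$), one alternately applies $X_\theta$ and $X_\theta^*$, at each step picking (via pigeonholing against the refined sets) a new parameter $t_i \in I$ or $s_i\in\R$ that keeps us inside the refined sets; after $d$ steps we obtain a map $\Psi^d_{(t_0,y_0)}:\R^d\to\R^d$ (resp.\ $\Phi^d$) of exactly the form in \eqref{E:def Psi even}–\eqref{E:def Phi odd}, landing in $E$ (resp.\ $F$). The image of this map has volume at most $|E|$ (resp.\ $\|\chi_F\|$-controlled), while the domain has measure controlled below by $\lambda$ and the fiber measures. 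The crucial input is Proposition~\ref{P:jacobian bounds}: changing variables via $\Psi^d$ (resp.\ $\Phi^d$), the Jacobian lower bound converts the domain volume into the product $|E| \geq \int (\text{Jacobian}) \gtrsim \lambda^{\#} \alpha^{\#}\beta^{\#}\int \prod_i |s_{i+1}-s_i|\,|L_P(t_i)|^{2/d}\prod_{j\neq i}|t_j-t_i|^2\cdots$, and here I need to estimate from below the integrals $\int \prod|s_{i+1}-s_i|\,ds$ over the refined $s$-sets and $\int \prod |L_P(t_i)|^{2/d}\prod|t_j-t_i|^2\, dt$ over the refined $t$-set contained in $I$. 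Using $|L_P(t)|\sim |t|^K$ and the fact that the refined $t$-set has measure comparable to $\beta$ times the relevant scale, these one-variable integrals are handled by elementary calculus (they are the same sort of integrals appearing in \cite{CE1}); combining the resulting inequality for $\lambda$ with the dual inequality coming from running the iteration starting from $E$ instead, and eliminating the auxiliary parameters $\alpha,\beta$ and the dyadic scales, yields \eqref{E:rwt} precisely in the range $\theta_0 \leq \theta \leq 1$ — the restriction to $\theta\geq\theta_0$ being exactly what makes the bookkeeping exponents consistent (it is where $q_\theta\leq r_\theta$, so the $L^{q'}(L^{r'})$ norm of $\chi_F$ behaves favorably).

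The main obstacle I anticipate is the bookkeeping of the weight $|t|^{2K\theta/((d+2)(d-1))}$ through the iteration and verifying that the exponents close up on the nose. In \cite{CE1} the weight is absent (or is the affine arclength of the moment curve, essentially $|L_P|\equiv 1$ after normalization), so one must carefully track how the $K$-dependent powers of $|t_i|$ interact with the $|L_P(t_i)|^{2/d}\sim |t_i|^{2K/d}$ factors from the Jacobian and with the measure of the refined $t$-subinterval of $I\subset[0,1]$. Getting the one-dimensional integral $\int_{I'} |t|^{\cdots}\prod_{j}|t_j - t|^2\, dt$ bounded below correctly — using that $0\notin\interior I$ so $|t|$ is comparable across $I$ up to controlled loss, and using the separation/localization of the other $t_j$'s — is the technical heart, and it is also where the choice $\theta\leq 1$ enters (for $\theta>1$ the weight would be too large and the integral estimate would fail, consistent with Proposition~\ref{P:optimality}).
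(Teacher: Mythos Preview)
Your proposal captures the skeleton of the method of refinements correctly, and at $\theta=\theta_0$ (where $q_\theta=r_\theta$ and the mixed norm collapses to $|F|^{1/q_{\theta_0}'}$) it would indeed close, essentially as in \cite{Laghi}. The genuine gap is for $\theta_0<\theta\le 1$: the refinement argument as you describe it produces a lower bound on $|F|$ (or on $|E|$ for the dual iteration), but the right-hand side of \eqref{E:rwt} is the \emph{mixed} norm $\|\chi_F\|_{L^{q_\theta'}(L^{r_\theta'})}$, which by Lemma~\ref{L:mixed lb} is $\ge |F|^{1/r_\theta'}|\Pi(F)|^{1/q_\theta'-1/r_\theta'}$. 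A lower bound on $|F|$ alone does not yield this without independent control on $|\Pi(F)|$, and your pigeonholing on the fiber sizes and dyadic $t$-scale does not supply a summable such control: the losses incurred when summing over the pigeonhole classes have nothing to play against.

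What the paper does that you are missing is a stopping-time decomposition in the $t$-variable (Lemma~\ref{L:stop time}, adapted from \cite{CE1}). For each $(s,x)\in E$ one finds an interval $I_{(s,x)}\subset I$ of $\mu_\theta$-measure $\sim 2^m\beta$ (for some integer $m\ge -C$) with the property that a definite fraction of the ``$t$-fiber'' $S_{(s,x)}$ survives after deleting any half-measure subinterval. One then partitions $E=\bigcup_m E^m$ by the value of $m$, and for fixed $m$ covers $I$ by intervals $J^m_j$ of $\mu_\theta$-measure $\sim 2^m\beta$, localizing to $E^m_j$, $F^m_j$. The point is twofold: (i) on each piece $|\Pi(F^m_j)|\lesssim |J^m_j|$ is now known, so the mixed norm of $\chi_{F^m_j}$ is controlled by $|F^m_j|$ and $|J^m_j|$; and (ii) the stopping-time property lets you, in the last step of the iteration, choose $t_D$ (or $t_{D+1}$) with the \emph{extra} separation $|t_D-t_1|\gtrsim |I_{(s_D,x_D)}|\sim 2^m\beta/|t|^{2K\theta/((d+2)(d-1))}$, which feeds into the Jacobian bound and produces a genuine gain $2^{\rho m}$ (Lemma~\ref{L:bound Fmj}). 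This gain is exactly what allows the sum over $m$ to converge; the sum over $j$ is handled by bounded overlap and H\"older (this is where $p_\theta\le q_\theta$ is used). Without the stopping-time step and the resulting $2^{\rho m}$ gain, your argument cannot close for $\theta>\theta_0$.
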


Before beginning the proof of Proposition~\ref{P:rwt}, we make a minor reduction.

\begin{lemma} \label{L:reduce rwt}  
It suffices to prove Proposition~\ref{P:rwt} under the additional hypothesis that there exists a constant $\beta$ such that $\beta \leq X^*_{\theta}\chi_F(s,x) \leq 2\beta$ for each $(s,x) \in E$.  
\end{lemma}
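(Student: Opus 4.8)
The plan is to use a standard pigeonholing decomposition of the set $E$ according to the dyadic size of $X^*_\theta \chi_F$, then sum a geometric series. Fix measurable sets $E, F \subset \R^d$ with $0 < |E|, |F| < \infty$ (the degenerate cases being trivial), and set $M = \sup_{(s,x)} X^*_\theta \chi_F(s,x)$, which is finite since $X^*_\theta$ maps $L^1$ to $L^\infty$ after our reductions (the weight $|t|^{2K\theta/((d+2)(d-1))}$ is bounded on $I \subset [0,1]$ and $I$ has finite measure). For each integer $n \geq 0$ let
\[
E_n = \{(s,x) \in E : 2^{-n-1} M < X^*_\theta \chi_F(s,x) \leq 2^{-n} M\},
\]
and let $E_\infty = \{(s,x) \in E : X^*_\theta \chi_F(s,x) = 0\}$, which contributes nothing to $\langle X_\theta \chi_E, \chi_F\rangle = \langle \chi_E, X^*_\theta \chi_F\rangle$. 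Then $E = E_\infty \cup \bigcup_{n \geq 0} E_n$ (up to a null set), and on each $E_n$ we have $\beta_n \leq X^*_\theta \chi_F \leq 2\beta_n$ with $\beta_n = 2^{-n-1}M$.

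Next I would apply the hypothesized conclusion of Proposition~\ref{P:rwt} in the form guaranteed by Lemma~\ref{L:reduce rwt}, i.e. the bound
\[
\langle X_\theta \chi_{E_n}, \chi_F \rangle \lesssim |E_n|^{1/p_\theta} \|\chi_F\|_{L^{q_\theta'}(L^{r_\theta'})}
\]
holds for each $n$, since $E_n$ satisfies the two-sided bound $\beta_n \le X^*_\theta \chi_F \le 2\beta_n$ on it. Summing over $n$ gives
\[
\langle X_\theta \chi_E, \chi_F\rangle = \sum_{n \geq 0} \langle X_\theta \chi_{E_n}, \chi_F\rangle \lesssim \|\chi_F\|_{L^{q_\theta'}(L^{r_\theta'})} \sum_{n\geq 0} |E_n|^{1/p_\theta},
\]
so the remaining issue is to dominate $\sum_n |E_n|^{1/p_\theta}$ by a constant multiple of $|E|^{1/p_\theta}$. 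This does not follow from $\sum_n |E_n| = |E \setminus E_\infty| \le |E|$ alone when $p_\theta > 1$ (which holds for $\theta > 0$), so I need extra decay in $n$.

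The key extra input is an upper bound $|E_n| \lesssim 2^{\sigma n}\, (\text{something})$ with $\sigma > 0$, obtained from the trivial pairing estimate: on $E_n$ we have $X^*_\theta \chi_F \geq 2^{-n-1}M$, so
\[
2^{-n-1} M\, |E_n| \leq \int_{E_n} X^*_\theta \chi_F = \langle X_\theta \chi_{E_n}, \chi_F\rangle \leq \langle X_\theta \chi_E, \chi_F \rangle \le \int_E X^*_\theta\chi_F \le M |E|.
\]
Wait — more efficiently, $2^{-n-1}M |E_n| \le \int_{E_n} X^*_\theta\chi_F \le \langle X_\theta\chi_E,\chi_F\rangle$, and separately, by the definition of $M$ there is a point where $X^*_\theta\chi_F$ is close to $M$, which (via the double bound on the associated level set, or directly via $X_\theta : L^1 \to L^\infty(L^1)$-type reasoning) forces $M \lesssim \|\chi_F\|_{\cdots}^{?}$; it is cleaner to split $E$ into the pieces where $n \le n_0$ and $n > n_0$ for a threshold $n_0$ chosen so that the tail $\sum_{n > n_0} |E_n|^{1/p_\theta}$ is controlled by the trivial bound $\int_{E_n} X^*_\theta \chi_F \ge 2^{-n-1}M|E_n|$ combined with $\langle X_\theta\chi_E,\chi_F\rangle \le \|X_\theta\|_{L^1 \to L^\infty(L^1)}$-type estimates, and the finitely many head terms $n \le n_0$ are each $\lesssim |E|^{1/p_\theta}$.

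The main obstacle I anticipate is making the tail sum genuinely small: the crude bound $|E_n| \le |E|$ gives only $\sum_{n \le n_0} |E_n|^{1/p_\theta} \lesssim n_0 |E|^{1/p_\theta}$, so $n_0$ must be taken to depend only on $d, N, \theta$, not on $E, F$. This is where one uses that the size of $M$ relative to $\langle \chi_E, X^*_\theta\chi_F\rangle / |E|$ is controlled: namely $M \le X^*_\theta \chi_F$ at its sup, and on any level set $\{X^*_\theta \chi_F > \beta\}$ Lemma~\ref{L:reduce rwt}'s conclusion applied with $E' = \{X^*_\theta\chi_F > \beta\} \cap (\text{dyadic piece})$ bounds $\beta |E'| \lesssim |E'|^{1/p_\theta}\|\chi_F\|_{\cdots}$, forcing $|E'|^{1-1/p_\theta} \lesssim \beta^{-1}\|\chi_F\|_{\cdots}$, and from such estimates with $\beta$ comparable to $M$ one extracts $M \lesssim |E|^{1/p_\theta - 1}\|\chi_F\|_{\cdots}$ only on the relevant part — so in fact the truncation index $n_0$ can be taken where $2^{-n_0} M \sim \langle X_\theta\chi_E,\chi_F\rangle/|E|$, giving $\sum_{n > n_0}$ a convergent geometric tail times $\langle X_\theta\chi_E,\chi_F\rangle$ (absorbed) and $\sum_{n \le n_0}$ a bounded number of terms each $\lesssim |E|^{1/p_\theta}\|\chi_F\|_{\cdots}$. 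Carrying out this bookkeeping carefully, with all implied constants depending only on $d$, $N$, $\theta$, completes the reduction; the argument is routine once the right threshold is identified, so I would present it compactly.
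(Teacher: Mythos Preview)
Your approach is essentially the same idea as the paper's---dyadic level-set decomposition of $E$ according to the size of $X_\theta^*\chi_F$, absorb the small levels, and use Chebyshev-type decay on the large ones---but your normalization by $M = \sup X_\theta^*\chi_F$ rather than by $\beta := \langle X_\theta\chi_E,\chi_F\rangle/|E|$ makes the bookkeeping messier, and your final paragraph contains a genuine misstep.

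The error is your claim that the head terms $n \le n_0$ form ``a bounded number of terms each $\lesssim |E|^{1/p_\theta}\|\chi_F\|$.'' You correctly observe just before this that $n_0$ would need to be uniform in $E,F$ for that to work, but it is not: $n_0 \sim \log_2(M/\beta)$ depends on both sets. The fix is not to make $n_0$ uniform but to use the Chebyshev bound you already wrote down, $2^{-n-1}M|E_n| \le \langle X_\theta\chi_E,\chi_F\rangle = \beta|E|$, which gives $|E_n| \lesssim 2^{n-n_0}|E|$ for $n \le n_0$. Then $\sum_{n \le n_0}|E_n|^{1/p_\theta} \lesssim |E|^{1/p_\theta}\sum_{m \ge 0} 2^{-m/p_\theta} \lesssim |E|^{1/p_\theta}$ is a geometric series, not a finite sum. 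With this correction your argument goes through.

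The paper avoids this detour entirely by decomposing relative to $\beta$ from the start: set $E^n = \{(s,x) \in E : 2^{n-1}\beta < X_\theta^*\chi_F \le 2^n\beta\}$. Then the contribution of $\bigcup_{n<0}E^n$ is at most $\tfrac12\langle X_\theta\chi_E,\chi_F\rangle$ and is absorbed, while for $n \ge 0$ one has $|E^n| \lesssim 2^{-n}|E|$ directly from $2^{n-1}\beta|E^n| \le \beta|E|$, and the sum $\sum_{n\ge 0}2^{-n/p_\theta}$ converges. No threshold $n_0$, no supremum $M$, no splitting into head and tail.
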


\begin{proof}[Proof of Lemma~\ref{L:reduce rwt}]  Assume that the proposition has been proven under the additional hypothesis given in the lemma and let $E,F \subset \R^d$ be measurable sets.  By the monotone convergence theorem, we may assume that $E,F$ are bounded sets, and we may of course assume that $E,F$ have positive measures.

Let $\beta = \frac{\langle X_\theta \chi_E,\chi_F\rangle}{|E|}$.  For $n \in \Z$, let
$$
E^n = \{(s,x) \in E : 2^{n-1} \beta < X_\theta^*\chi_F(s,x) \leq 2^n \beta\}.
$$
Then standard arguments show that
$$
\langle X_\theta \chi_{\bigcup_{n=-\infty}^{-1} E^n}, \chi_F\rangle \leq \tfrac12 \langle X_\theta \chi_E,\chi_F\rangle,
$$
which implies that 
$$
\tfrac12 \langle X_\theta \chi_E,\chi_F\rangle \leq \sum_{n=0}^{\infty} \langle X_\theta \chi_{E^n},\chi_F\rangle.
$$
Furthermore,
$$
\beta|E| = \langle X_\theta\chi_E,\chi_F\rangle \geq \langle X_\theta \chi_{E^n},\chi_F\rangle \geq 2^{n-1}\beta|E^n|,
$$
so $|E^n| \lesssim 2^{-n}|E|$.  Thus by our assumption, we have
\begin{align*}
\langle X_\theta \chi_E,\chi_F\rangle \lesssim \sum_{n=0}^{\infty} \langle X_\theta \chi_{E^n},\chi_F\rangle &\lesssim \sum_{n=0}^{\infty} 2^{-\frac{n}{p_\theta}}|E|^{\frac1{p_{\theta}}}\|\chi_F\|_{L^{q_\theta'}(L^{r_\theta'})} \lesssim |E|^{\frac1{p_\theta}}\|\chi_F\|_{L^{q_\theta'}(L^{r_\theta'})},
\end{align*}
i.e.\ \eqref{E:rwt} holds.  This completes the proof of the lemma.
\end{proof}

We record two more lemmas before proceeding to the main part of the proof of Proposition~\ref{P:rwt}.

\begin{lemma}[\cite{CE1}] \label{L:mixed lb}
Let $F \subset \R^d$.  Then
$$
\|\chi_F\|_{L^{q_\theta'}(L^{r_\theta'})} \geq |F|^{\frac1{r_\theta'}}|\Pi(F)|^{\frac1{q_\theta'}-\frac1{r_\theta'}} \qquad \theta_0 \leq \theta \leq 1,
$$
where $\Pi:\R^d \to \R$ denotes the projection $\Pi(t,y) = t$.
\end{lemma}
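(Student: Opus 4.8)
The plan is to prove the pointwise-in-$L^r$ lower bound first and then integrate it, exploiting the fact that in the relevant range $\theta_0 \le \theta \le 1$ one has $q_\theta' \ge r_\theta'$. First I would write out the definition
\[
\|\chi_F\|_{L^{q_\theta'}(L^{r_\theta'})} = \left(\int_\R \Big(\int_{\R^{d-1}} \chi_F(t,y)\, dy\Big)^{\frac{q_\theta'}{r_\theta'}}\, dt\right)^{\frac1{q_\theta'}} = \left(\int_{\Pi(F)} |F_t|^{\frac{q_\theta'}{r_\theta'}}\, dt\right)^{\frac1{q_\theta'}},
\]
where $F_t = \{y : (t,y) \in F\}$ is the $t$-slice of $F$ and $|F_t|$ its $(d-1)$-dimensional measure, so that $|F| = \int_{\Pi(F)} |F_t|\, dt$ and the integral runs over $\Pi(F)$, which has finite measure (we may assume $F$ bounded, or just note both sides are $+\infty$ otherwise).

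Next I would apply the reverse Jensen / power-mean inequality on the probability space obtained by normalizing Lebesgue measure on $\Pi(F)$: since $\frac{q_\theta'}{r_\theta'} \ge 1$ exactly when $r_\theta \ge q_\theta$, i.e.\ for $\theta \ge \theta_0$, we have
\[
\frac1{|\Pi(F)|}\int_{\Pi(F)} |F_t|^{\frac{q_\theta'}{r_\theta'}}\, dt \ge \left(\frac1{|\Pi(F)|}\int_{\Pi(F)} |F_t|\, dt\right)^{\frac{q_\theta'}{r_\theta'}} = \left(\frac{|F|}{|\Pi(F)|}\right)^{\frac{q_\theta'}{r_\theta'}}.
\]
Multiplying through by $|\Pi(F)|$, raising to the power $\frac1{q_\theta'}$, and simplifying the exponents of $|F|$ and $|\Pi(F)|$ gives
\[
\|\chi_F\|_{L^{q_\theta'}(L^{r_\theta'})} \ge |F|^{\frac1{r_\theta'}}\, |\Pi(F)|^{\frac1{q_\theta'} - \frac1{r_\theta'}},
\]
which is exactly the claimed bound.

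The only genuine point to verify — and the one I would spell out — is the sign condition $q_\theta' \ge r_\theta'$ on the range $\theta_0 \le \theta \le 1$, i.e.\ that $r_\theta \ge q_\theta$ there. This follows from \eqref{E:def p}: $q_\theta^{-1} = \frac{\theta d}{d+2}$ is decreasing in $\theta$ while $r_\theta^{-1} = 1 - \theta + \frac{\theta(d^2-d-2)}{d^2+d-2}$ is also decreasing in $\theta$ but with a different slope, and the two agree precisely at $\theta = \theta_0 = \frac{(d+2)(d-1)}{d(d+1)}$ (which is how $\theta_0$ was defined); comparing slopes shows $r_\theta^{-1} \le q_\theta^{-1}$, hence $r_\theta' \le q_\theta'$, for all $\theta \ge \theta_0$. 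With that in hand the power-mean step is legitimate and the proof is complete. There is no substantive obstacle here; the lemma is essentially Jensen's inequality once the exponent ordering is checked, and indeed it is quoted from \cite{CE1}.
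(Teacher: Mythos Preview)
Your proof is correct and essentially the same as the paper's: the paper applies H\"older's inequality to $|F| = \int_{\Pi(F)} |F_t|\,dt$ with exponents $(1-r_\theta'/q_\theta')^{-1}$ and $q_\theta'/r_\theta'$, which is precisely your Jensen/power-mean step viewed from the other side. One small slip in your write-up: $q_\theta^{-1} = \tfrac{\theta d}{d+2}$ is \emph{increasing} in $\theta$, not decreasing; since $r_\theta^{-1}$ is decreasing and the two cross at $\theta_0$, this is exactly what gives $r_\theta^{-1} \le q_\theta^{-1}$ (hence $r_\theta' \le q_\theta'$) for $\theta \ge \theta_0$.
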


\begin{proof}  For $\theta$ in the specified range, we have $r_\theta \geq q_\theta$, so $r_\theta' \leq q_\theta'$.  Thus by H\"older's inequality, we have
\begin{align*}
|F| &= \int_{\Pi(F)}\int_{\R^{d-1}} \chi_F(t,y)\, dy\, dt \leq |\Pi(F)|^{1-\frac{r_\theta'}{q_\theta'}} \left(\int_\R \left(\int_{\R^{d-1}} \chi_F(t,y)\, dy\right)^{\frac{q_\theta'}{r_\theta'}}\, dt \right)^{\frac{r_\theta'}{q_\theta'}}.
\end{align*}
\end{proof}

Next is a variant of a lemma from \cite{CE1}.  For $0 \leq \theta \leq 1$, we let $\mu_{\theta}$ denote the measure satisfying
\begin{equation} \label{E:def mu theta}
\int_\R f(t)\, d\mu_{\theta}(t) = \int_I f(t)\, |t|^{\frac{2K\theta}{(d+2)(d-1)}}\, dt.
\end{equation}

\begin{lemma} \label{L:stop time}  Let $\eps > 0$.  Then there exists a constant $c_{\eps} > 0$ such that for every interval $I_0 \subset [0,1]$ and every Lebesgue measurable $S \subset I_0$, there exists an interval $J \subset I_0$ such that $\mu_\theta(J \cap S) \geq \frac12\mu_\theta(S)$ and such that for every interval $I' \subset J$ with $\mu_\theta(I') = \frac12\mu_\theta(J)$, we have that
$$
\mu_\theta(S \cap (J \setminus I')) \geq c_{\eps} \bigl(\tfrac{\mu_\theta(S)}{\mu_\theta(J)}\bigr)^{\eps}\mu_\theta(S).
$$
\end{lemma}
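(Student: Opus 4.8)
The plan is to prove Lemma~\ref{L:stop time} by a stopping-time/bisection argument on intervals, running the construction relative to the measure $\mu_\theta$ and using the crucial fact—guaranteed by Lemma~\ref{L:reductions}—that $\mu_\theta$ is a power weight on $[0,1]$, hence doubling in a quantitative sense that depends only on $d$, $N$, $\theta$.

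First I would record the doubling property: since $\frac{d\mu_\theta}{dt}(t)=|t|^{\alpha}$ with $\alpha=\frac{2K\theta}{(d+2)(d-1)}\ge 0$ bounded above in terms of $d,N$, one checks directly that for any interval $I\subset[0,1]$ and any subinterval $I''\subset I$ with $|I''|\ge \tfrac12|I|$ we have $\mu_\theta(I'')\gtrsim \mu_\theta(I)$, and more generally that halving an interval in $\mu_\theta$-measure only shrinks its length by a factor bounded below. Concretely, if $I'\subset J$ with $\mu_\theta(I')=\tfrac12\mu_\theta(J)$, then $|I'|\gtrsim |J|$, with implicit constant depending only on $\alpha$ (this is where nonnegativity of $\alpha$, equivalently the reduction $b_j=0\notin\interior I_j$, is used). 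Then, given $I_0$ and $S\subset I_0$, I would build a nested sequence $I_0\supset J_1\supset J_2\supset\cdots$ by repeatedly bisecting in $\mu_\theta$-measure and always passing to the half (or an appropriate sub-interval) carrying at least half of the remaining mass of $S$; stop at the first $J=J_m$ with $\mu_\theta(S\cap J)\ge\tfrac12\mu_\theta(S)$ but such that no further bisection step keeps $\ge\tfrac12$ of that mass in one half. This $J$ automatically satisfies $\mu_\theta(S\cap J)\ge\tfrac12\mu_\theta(S)$, and by construction, for \emph{any} subinterval $I'\subset J$ with $\mu_\theta(I')=\tfrac12\mu_\theta(J)$, the complement $J\setminus I'$ still carries a definite fraction of $\mu_\theta(S\cap J)$.

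The quantitative lower bound with the exponent $\eps$ then comes from tracking how the ratio $\frac{\mu_\theta(S\cap J_k)}{\mu_\theta(J_k)}$ evolves: each bisection step at worst doubles this density, and the process must terminate once the density reaches $1$; comparing the number of steps with the total loss of $\mu_\theta(S)$-mass, and choosing at each stage the ``worst'' bisection (the one making the density grow slowest) gives, after summing a geometric series, a bound of the form $\mu_\theta(S\cap(J\setminus I'))\ge c(\mu_\theta(S)/\mu_\theta(J))^{\eps}\mu_\theta(S)$ for any prescribed $\eps>0$, with $c=c_\eps$ depending on $\eps$ (and $d,N,\theta$). The reason an arbitrary small $\eps$ is attainable rather than a fixed exponent is that one is free to stop the bisection early, trading a larger interval $J$ (hence larger $\mu_\theta(J)$, making the right-hand side smaller) against a cleaner lower bound on the residual mass.

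The main obstacle is the interaction between ``bisection in $\mu_\theta$-measure'' and ``$I'$ of half the $\mu_\theta$-measure of $J$ is arbitrary, not necessarily one of the dyadic halves from the construction.'' One must argue that once $J$ is chosen so that every dyadic $\mu_\theta$-bisection of $J$ spreads $S$ roughly evenly, then \emph{every} $\mu_\theta$-bisecting subinterval $I'$ also does, which requires the doubling/regularity of $\mu_\theta$ to pass from the special dyadic cut points to a general cut point (a general $I'$ is sandwiched between two consecutive dyadic bisections, and the $\mu_\theta$-comparability of nearby intervals controls the error). Handling this last step carefully—and keeping the constant $c_\eps$ uniform over all $I_0\subset[0,1]$ and all $S$—is the delicate part; the rest is a routine iteration. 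I would cite \cite{CE1} for the analogous Lebesgue-measure statement and emphasize only the modifications forced by the weight $|t|^{\alpha}$.
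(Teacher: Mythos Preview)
Your overall plan---a stopping-time iteration on nested intervals, halving in $\mu_\theta$-measure---is exactly the framework the paper uses. But two features of your execution differ from the paper's argument in ways that leave genuine gaps.

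First, the stopping rule you write down does not produce the conclusion. If at each stage you pass to the $\mu_\theta$-half carrying at least half of the remaining $S$-mass and stop when ``no further bisection step keeps $\ge\tfrac12$ of that mass in one half,'' then at the stopping stage you have information only about the two specific $\mu_\theta$-children of $J$, and with a \emph{fixed} threshold $\tfrac12$ there is no parameter in the construction that could generate the factor $(\mu_\theta(S)/\mu_\theta(J))^\eps$. Your paragraph about ``tracking how the density evolves,'' ``summing a geometric series,'' and ``stopping early'' gestures at the missing mechanism but does not supply it.

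The paper's proof resolves both issues simultaneously by (i) using a \emph{variable} threshold that already encodes $\eps$, and (ii) quantifying at every step over \emph{all} half-$\mu_\theta$-measure subintervals, not just the two dyadic ones. With $m_0$ defined by $\mu_\theta(I_0)=2^{m_0}\mu_\theta(S)$, one passes from $I_j$ to some $I_{j+1}\subset I_j$ with $\mu_\theta(I_{j+1})=\tfrac12\mu_\theta(I_j)$ whenever such an $I_{j+1}$ exists with
\[
\mu_\theta(S\cap I_{j+1})\ \ge\ \bigl(1-c\,2^{\eps(j-m_0)}\bigr)\,\mu_\theta(S\cap I_j);
\]
otherwise one stops and sets $J=I_j$. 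Choosing $c=c_\eps$ small makes the telescoping product $\prod_{i<j}(1-c\,2^{\eps(i-m_0)})\ge\tfrac12$ for all $j\le m_0+2$, which forces termination at some $j\le m_0+1$ (since $\mu_\theta(I_j)=2^{m_0-j}\mu_\theta(S)$). The stopping condition then says \emph{directly} that every half-measure $I'\subset J$ satisfies $\mu_\theta(S\cap(J\setminus I'))\ge c\,2^{\eps(j-m_0)}\mu_\theta(S\cap J)\ge \tfrac{c}{2}\,(\mu_\theta(S)/\mu_\theta(J))^{\eps}\mu_\theta(S)$.

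Second, note that this argument uses nothing about $\mu_\theta$ beyond its being nonatomic on an interval. The doubling property you record is not needed, and the ``main obstacle'' you flag---passing from the dyadic bisection to an arbitrary half-measure $I'$---never arises, precisely because the iteration already tests against arbitrary half-measure subintervals at every step. Your proposed fix via doubling would in any case be delicate: a general half-measure $I'$ can sit in the interior of $J$ and is not sandwiched between consecutive dyadic cuts in any way that a doubling comparison alone controls.
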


\begin{proof}
We will use a stopping time argument to find an interval $J$ having measure $\mu_\theta(J) = 2^{m}\mu_\theta(S)$ such that $\mu_\theta(J \cap S) \geq \frac12\mu_\theta(S)$ and such that for any interval $I' \subset J$ with $\mu_\theta(I') = \frac12\mu_\theta(J)$, 
$$
\mu_\theta(S \cap I') < (1-c_{\eps}2^{-m \eps})\mu_\theta(S \cap J).
$$

With $I_0$ as in the statement of the lemma, define $m_0\geq 0$ so that $\mu_\theta(I_0) = 2^{m_0}\mu_\theta(S)$.  Let $c > 0$ be a fixed constant whose value will be determined in a moment.  We argue inductively.  Given $I_j$, if there exists an interval $I' \subset I_j$ with $\mu_\theta(I') = \frac12\mu_\theta(I_j)$ and
$$
\mu_\theta(S \cap I') \geq (1-c2^{\eps(j-m_0)})\mu_\theta(S \cap I_j),
$$
then we let $I_{j+1} = I'$ (for one such interval $I'$) and continue.  Otherwise we stop.

We observe that at the $j$-th stage,
$$
\mu_\theta(S \cap I_j) \geq (1-c2^{\eps(j-1-m_0)}) \cdots (1-c2^{-\eps m_0}) \mu_\theta(S).
$$
In particular, if $c = c_{\eps}$ is taken sufficiently small, then
$$
\mu_\theta(S \cap I_j) \geq \tfrac12 \mu_\theta(S), \qtq{for all $j \leq m_0+2$.}
$$
But since $j > m_0+1$ implies that $\mu_\theta(I_j) < \tfrac12\mu_\theta(S)$, the procedure must stop while $j \leq m_0+1$.  The proof is thus complete.
\end{proof}

We are now ready to prove Proposition~\ref{P:rwt}.

\begin{proof}[Proof of Proposition~\ref{P:rwt}] 
By Lemma~\ref{L:reductions}, our goal is to establish the restricted weak-type estimate \eqref{E:rwt} for $X_{\theta} = \tilde X_{\theta}$ in the reduced form \eqref{E:new X}.  Let $E,F \subset \R^d$ be measurable sets having finite, positive measures.  By Lemma~\ref{L:reduce rwt}, we may assume that 
$$
0 < \beta \leq X_\theta^*\chi_F(s,x) \leq 2\beta < \infty
$$
for each $(s,x) \in E$.  

We define 
\begin{equation} \label{E:I beta deltaK}
I_\beta = [c\beta^{\delta_{K\theta}},1], \qquad \delta_{K\theta} = \bigl(1+\tfrac{2K\theta}{(d+2)(d-1)}\bigr)^{-1}
\end{equation}
 and observe that for $c$ sufficiently small, we have $\mu_\theta(I \setminus I_\beta) \ll \beta$.  (Recall that $I\subseteq[0,1]$ and $\mu_\theta$ is given by \eqref{E:def mu theta}.)

Let $(s,x) \in E$ and observe that
$$
X_\theta^* \chi_F(s,x) = \mu_\theta(\{t \in I : (t,x-sP(t)) \in F\}).
$$
Thus if we define
$$
S_{(s,x)} = \{t \in I_\beta : (t,x-sP(t)) \in F\},
$$
we have $\mu_\theta(S_{(s,x)}) \sim \beta$.  

Let $\eps > 0$ be a small quantity to be determined in a moment.  By Lemma~\ref{L:stop time}, there exists an interval $I_{(s,x)} \subset I_\beta$ such that 
\begin{align*}
&\mu_\theta(I_{(s,x)}) = 2^{m_{(s,x)}}\beta, \qtq{with $m_{(s,x)} \geq -C$ for some integer constant $C$,}\\
&\mu_\theta(I_{(s,x)} \cap S_{(s,x)}) \sim \beta,
\end{align*}
and for any interval $I' \subset I_{(s,x)}$ with $\mu_\theta(I') = \frac12\mu_\theta(I_{(s,x)})$ we have 
$$
\mu_\theta(S_{(s,x)} \cap (I_{(s,x)}\setminus I')) \geq c_\eps 2^{-\eps m_{(s,x)}}\beta.
$$
We partition $E$ as $E = \bigcup_{m=-C}^\infty E^m$, where
$$
E^m = \{(s,x) : m-1 < m_{(s,x)} \leq m\}.
$$

With $m$ fixed, we choose points $T_j$, $0 \leq j \leq M$, satisfying
\begin{gather*}
\inf (I \cap I_{\beta}) = T_0 < T_1 < \cdots < T_M = 1\\
2^{m-1}\beta \leq \mu_\theta([T_{j-1},T_j]) \leq 2^m\beta, \quad j=1,\cdots,M.
\end{gather*}
Let $J^m_j = [T_{j-1},T_j]$, for $1 \leq j \leq M$ and $J^m_0=\emptyset$.  It is easy to see that
\begin{equation} \label{E:length Jmj}
|J^m_j| \sim 2^m \beta T_j^{-\frac{2K\theta}{(d+2)(d-1)}}, \qquad 1 \leq j \leq M.
\end{equation}

For $j=1,\dots,M$, we define
$$
E_j^m = \bigl\{(s,x) \in E^m : I_{(s,x)} \subset J_{j-1}^m \cup J_j^m \cup J_{j+1}^m\bigr\}
$$
and observe that if $(s,x) \in E_j^m$, then 
\begin{equation} \label{E:betamj is beta}
\beta \sim X_\theta^*\chi_F(s,x) \sim \mu_\theta(S_{(s,x)}) \sim X_\theta^*\chi_{F_j^m}(s,x),
\end{equation}
where $F_j^m := \Pi^{-1}(J_{j-1}^m \cup J_j^m \cup J_{j+1}^m) \cap F$.  With $m$, $j$ fixed, we define 
$$
\alpha_j^m = \frac{\langle X_\theta \chi_{E_j^m},\chi_{F_j^m}\rangle}{|F_j^m|}.
$$

We will soon prove the following.

\begin{lemma} \label{L:bound Fmj}
Provided $\theta \ge \theta_0$ and $\eps$ is sufficiently small, depending on $N$ and $d$, there exists $\rho > 0$ such that
\begin{equation} \label{E:bound Fmj}
|F^m_j|^{\frac1{r_\theta'} - \frac1{p_\theta'}} \gtrsim 2^{m\rho}(\alpha^m_j)^{\frac1{p_\theta'}}\beta^{\frac1{p_\theta}}|J^m_j|^{\frac1{r_\theta'} - \frac1{q_\theta'}}.
\end{equation}
The implicit constant and $\rho$ are both independent of $\theta$.
\end{lemma}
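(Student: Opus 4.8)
The plan is to obtain a lower bound for $|F^m_j|$ by Christ's method of refinements (cf.\ \cite{CCC, CE1}) and then to check, by a direct computation with the explicit exponents \eqref{E:def p}, that this lower bound is equivalent to \eqref{E:bound Fmj}. I would begin with the standard preparatory step: by dyadic pigeonholing (at a cost of logarithmic factors, harmless for a restricted weak-type estimate) pass to subsets $\tilde E \subseteq E^m_j$ and $\tilde F \subseteq F^m_j$, of measures comparable to $|E^m_j|$ and $|F^m_j|$ respectively, on which the relation underlying $X_\theta$ has two-sided control of its fibers --- namely $X_\theta^*\chi_{\tilde F}(s,x)\sim\beta$ for $(s,x)\in\tilde E$, available from \eqref{E:betamj is beta} and Lemma~\ref{L:reduce rwt}, and $X_\theta\chi_{\tilde E}(t,y)\sim\alpha^m_j$ for $(t,y)\in\tilde F$, available because $\alpha^m_j$ is by definition the average of $X_\theta\chi_{E^m_j}$ over $F^m_j$ --- and such that for most points the relevant fiber stays inside the refined sets. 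Since $X_\theta$ and its adjoint integrate against $d\mu_\theta(t)=|t|^{\frac{2K\theta}{(d+2)(d-1)}}\,dt$ and $|t|\sim T_j$ on the triple $J^m_{j-1}\cup J^m_j\cup J^m_{j+1}$ containing all the relevant $t$'s (the lone exceptional interval near $T_0$, where $t$ can be much smaller than $T_j$, being dealt with separately using $T_0\geq c\beta^{\delta_{K\theta}}$), the $s$-fiber over a point of $\tilde F$ has Lebesgue measure $\sim\alpha^m_j T_j^{-\frac{2K\theta}{(d+2)(d-1)}}$ and the $t$-fiber over a point of $\tilde E$ has Lebesgue measure $\sim\beta T_j^{-\frac{2K\theta}{(d+2)(d-1)}}\sim 2^{-m}|J^m_j|$ by \eqref{E:length Jmj}.

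Next I would run the iteration. Fixing a generic base point $(t_0,y_0)\in\tilde F$ when $d$ is even (resp.\ $(s_0,x_0)\in\tilde E$ when $d$ is odd), I would construct the map $\Psi^d_{(t_0,y_0)}$ of \eqref{E:def Psi even}--\eqref{E:def Psi odd} (resp.\ $\Phi^d_{(s_0,x_0)}$ of \eqref{E:def Phi even}--\eqref{E:def Phi odd}), alternately invoking the two density bounds above: each step producing a new $s$-parameter draws it from a set of measure $\sim\alpha^m_j T_j^{-\frac{2K\theta}{(d+2)(d-1)}}$, and each step producing a new $t$-parameter draws it from the $\mu_\theta$-measure-$\beta$ subset of the stop-time interval $I_{(\cdot,\cdot)}$ supplied by the construction preceding \eqref{E:betamj is beta}. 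Because $d$ is fixed, Lemma~\ref{L:stop time} allows me to subdivide each such stop-time interval a bounded number of times so that the finitely many $t$-parameters selected during the iteration lie in pairwise-separated subintervals, each of length $\sim|J^m_j|$, hence pairwise separated by $\gtrsim|J^m_j|$, at a total multiplicative cost of $2^{-C\eps m}$. This yields a parameter domain $\Omega\subseteq\R^d$ with $\Psi^d_{(t_0,y_0)}(\Omega)\subseteq\tilde F\subseteq F^m_j$ whose fibers have the measures just listed; as $\Psi^d_{(t_0,y_0)}$ is a polynomial map of degree bounded by a function of $N$ and $d$, its generic fiber is a finite set of bounded cardinality, so
$$
|F^m_j|\;\geq\;|\Psi^d_{(t_0,y_0)}(\Omega)|\;\gtrsim\;\int_\Omega|\det D\Psi^d_{(t_0,y_0)}|.
$$

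To finish I would insert the lower bound for $|\det D\Psi^d_{(t_0,y_0)}|$ from Proposition~\ref{P:jacobian bounds}, which after the reductions of Lemma~\ref{L:reductions} uses $|L_P(t)|\sim|t|^K$. On $\Omega$ this gives $|L_P(t_i)|\sim T_j^K$, the Vandermonde-type separation factors satisfy $|t_i-t_j|\gtrsim|J^m_j|$, and integrating the chain of successive differences $|s_i-s_{i-1}|$ appearing in the formulas of Lemma~\ref{L:jacobian formulae} against the $s$-variables produces an extra factor $\bigl(\alpha^m_j T_j^{-\frac{2K\theta}{(d+2)(d-1)}}\bigr)^{d-1}$ with no loss, since a chain (as opposed to a clique) of differences admits no small-denominator obstruction. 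Collecting the powers of $\alpha^m_j$, $\beta$, $T_j$, $2^m$ and $2^{-C\eps m}$ and then eliminating $T_j$ via \eqref{E:length Jmj} --- which amounts to substituting $T_j^K\sim\bigl(2^m\beta/|J^m_j|\bigr)^{(d+2)(d-1)/(2\theta)}$, whose exponent is precisely the reciprocal of $\frac1{r'_\theta}-\frac1{p'_\theta}$ --- leaves a bound of the form $|F^m_j|\gtrsim 2^{m(\rho'-C\eps)}(\alpha^m_j)^{d-1}\beta^{a_\theta}|J^m_j|^{b_\theta}$, and a routine computation with \eqref{E:def p} identifies $(d-1)\bigl(\tfrac1{r'_\theta}-\tfrac1{p'_\theta}\bigr)=\tfrac1{p'_\theta}$, $a_\theta\bigl(\tfrac1{r'_\theta}-\tfrac1{p'_\theta}\bigr)=\tfrac1{p_\theta}$, $b_\theta\bigl(\tfrac1{r'_\theta}-\tfrac1{p'_\theta}\bigr)=\tfrac1{r'_\theta}-\tfrac1{q'_\theta}$; raising to the power $\frac1{r'_\theta}-\frac1{p'_\theta}$ then gives \eqref{E:bound Fmj}. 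Here $\rho'$ is a strictly positive constant depending only on $d$ (one readily checks $\rho'>0$ for every $d\geq 3$, with room to spare), while $\theta\geq\theta_0$ is exactly the condition making the exponent $\frac1{r'_\theta}-\frac1{q'_\theta}$ of $|J^m_j|$ nonnegative, in keeping with the reverse-Hölder inequality of Lemma~\ref{L:mixed lb}; choosing $\eps$ small and then taking $\rho$ to be the positive infimum of $\bigl(\tfrac12\rho'\bigr)\bigl(\tfrac1{r'_\theta}-\tfrac1{p'_\theta}\bigr)$ over $\theta\in[\theta_0,1]$ makes both $\rho$ and the implicit constant independent of $\theta$.

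The step I expect to be the main obstacle is the interface between the preparatory refinement and the separation bookkeeping: one must arrange the passage to $\tilde E,\tilde F$ so that comparable measure is retained while almost every fiber remains inside the refined sets --- so that the iterated map really is defined on a domain $\Omega$ that is full-measure in each fiber and has the advertised fiber sizes --- and one must organize the repeated applications of Lemma~\ref{L:stop time} so that all of the separation factors $|t_i-t_j|$ are simultaneously $\gtrsim|J^m_j|$ while only $2^{-C\eps m}$ is lost (which is what forces $\eps$, hence $C\eps$, to be chosen small against the fixed gain $\rho'$). Once this is set up correctly, the remaining verification --- that the explicit exponents collapse to \eqref{E:bound Fmj} with a strictly positive, $\theta$-independent $\rho$ --- is a bounded computation.
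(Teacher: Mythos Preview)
Your overall strategy---refinements, the Jacobian bound from Proposition~\ref{P:jacobian bounds}, Bezout, then algebra with the exponents---is the same as the paper's, and your final identity $(d-1)\bigl(\tfrac1{r'_\theta}-\tfrac1{p'_\theta}\bigr)=\tfrac1{p'_\theta}$ is correct. The gap is in the separation step.

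You assert that Lemma~\ref{L:stop time} can be used ``a bounded number of times'' so that \emph{all} pairs satisfy $|t_i-t_l|\gtrsim|J^m_j|$ at cost $2^{-C\eps m}$. But that lemma only guarantees that after removing any \emph{single} half-$\mu_\theta$-subinterval of $J$, a $c_\eps 2^{-\eps m}$ fraction of $S$ survives. Once you have chosen $t_0$ and $t_1$ far apart inside $J^m_{j-1}\cup J^m_j\cup J^m_{j+1}$, the two balls $B(t_0,c|J^m_j|)$ and $B(t_1,c|J^m_j|)$ do not fit into one half-subinterval, so the lemma gives no control over $S_{(s_2,x_2)}$ outside their union; the remaining mass could sit arbitrarily close to $t_0$ or $t_1$. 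There is no iteration of the stop-time property that yields simultaneous $|J^m_j|$-separation from several previously chosen points, and the claim that the $t$'s land in pairwise non-adjacent subintervals of length $\sim|J^m_j|$ is not justified.

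What the paper actually does (Lemmas~\ref{L:mixed even}--\ref{L:mixed odd}) is more modest: for the intermediate steps it only secures the \emph{small} separation $|t_l-t_i|\gtrsim\beta|t_it_l|^{-K\theta/((d+2)(d-1))}\sim 2^{-m}|J^m_j|$, obtained by excising balls of $\mu_\theta$-measure $\ll\beta$. The stop-time structure of $I_{(s_D,x_D)}$ is invoked exactly once, at the \emph{final} step, to force the single large gap $|t_D-t_1|\gtrsim|I_{(s_D,x_D)}|$ (converted to $|t_D-t_1|\gtrsim 2^{(1+\delta_{K\theta})m/2}\beta|t_Dt_1|^{-K\theta/((d+2)(d-1))}$ after accounting for the possibility that $t_1$ or $t_D$ is near the left edge of $I_\beta$). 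Because this pair appears \emph{squared} in the Jacobian of Proposition~\ref{P:jacobian bounds}, it contributes a factor $2^{(1+\delta_{K\theta})m}$, and after the substitution $T_j^K\sim(2^m\beta/|J^m_j|)^{(d+2)(d-1)/(2\theta)}$ one arrives at $\rho=\delta_{K\theta}+\tfrac{(d+2)(d-1)}{2}(\tfrac1\theta-1)-\eps(d-2)>0$. Your computation would produce a much larger exponent of $2^m$ than the paper's, which is a symptom that the separation input is too strong; the single-pair gain is exactly what the stop-time lemma delivers and exactly what is needed.
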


Assuming the lemma for now, we complete the proof of Proposition~\ref{P:rwt}.  

Using the definition of $\alpha_j^m$ and the fact that $\frac{\langle X_\theta \chi_{E_j^m},\chi_{F_j^m}\rangle}{|E_j^m|} \sim \beta$ (which follows from \eqref{E:betamj is beta}), we see after a little algebra that, provided $\eps$ is sufficiently small,  we have
\begin{equation} \label{E:bound scriptX}
\langle X_\theta \chi_{E^m_j},\chi_{F^m_j}\rangle \lesssim 2^{-\rho m}\frac{|E^m_j|^{\frac1{p_\theta}}|F^m_j|^{\frac1{r_\theta'}}}{|J^m_j|^{\frac1{r_\theta'} - \frac1{q_\theta'}}} \leq 2^{-\rho m}|E^m_j|^{\frac1{p_\theta}}\|\chi_{F^m_j}\|_{L^{q_\theta'}(L^{r_\theta'})}.
\end{equation}

Using the fact that $E^m = \bigcup_j E^m_j$, the relation \eqref{E:betamj is beta}, the bound \eqref{E:bound scriptX}, and H\"older's inequality (recall that $p_\theta \leq q_\theta$ for $\theta \geq \frac{d^2+d-2}{d^2+d}$), we obtain
\begin{align*}
\langle X_\theta\chi_{E^m},\chi_F\rangle &\leq \sum_j \langle X_\theta \chi_{E^m_j},\chi_F\rangle \sim \sum_j \langle X_\theta\chi_{E^m_j},\chi_{F^m_j}\rangle \\
& \lesssim 2^{-\rho m}\sum_j |E^m_j|^{\frac1{p_\theta}} \|\chi_{F^m_j}\|_{L^{q_\theta'}(L^{r_\theta'})}\\
&\leq 2^{-\rho m} \bigl(\sum_j |E^m_j|\bigr)^{\frac1{p_\theta}} \sup_j \|\chi_{F^m_j}\|_{L^{q_\theta'}(L^{r_\theta'})}^{1-\frac{q_\theta'}{p_\theta'}} \bigl(\sum_j \|\chi_{F^m_j}\|_{L^{q_\theta'}(L^{r_\theta'})}^{q_\theta'}\bigr)^{\frac1{p_\theta'}}.
\end{align*}
To bound this last term, we observe that a point $t \in I$ can lie in $\Pi(F^m_j)$ for at most three values of $j$, and similarly, a point $(s,x) \in E^m$ can lie in $E^m_j$ for at most three values of $j$.  Thus
\begin{equation} \label{E:bounded overlap}
\sum_j |E^m_j| \lesssim |E| \qquad \sum_j \|\chi_{F^m_j}\|_{L^{q_\theta'}(L^{r_\theta'})}^{q_\theta'} \lesssim \|\chi_F\|_{L^{q_\theta'}(L^{r_\theta'})}^{q_\theta'},
\end{equation}
which by the computation above implies that
$$
\langle X_\theta \chi_{E^m},\chi_F\rangle \lesssim 2^{-\rho m} |E^m|^{\frac1{p_\theta}}\|\chi_F\|_{L^{q_\theta'}(L^{r_\theta'})}.
$$

Summing on $m$, we obtain \eqref{E:rwt}, and the proposition is proved.
\end{proof}

The proof of the key estimate in Lemma~\ref{L:bound Fmj} will be by the method of refinements.  Similar arguments have already appeared in print, but there are some differences that arise here, and so we give a complete proof.

We begin by recalling the iterated mappings $\Phi_{(s_0,x_0)}^k, \Psi_{(t_0,y_0)}^k : \R^k \to \R^d$ given in (\ref{E:def Phi even}-\ref{E:def Psi odd}).  

The following lemmas will reduce the proof of Lemma~\ref{L:bound Fmj} to a computation.

\begin{lemma}\label{L:mixed even}
If $d=2D \geq 4$ is even, then there exist a point $(t_0,y_0) \in F_j^m$ and a set $\Omega_d \subset \R^d$ with $\Psi^d_{(t_0,y_0)}(\Omega_d) \subset F_j^m$ and
\begin{align}  \label{E:omegad volume even} 
\int_{\Omega_d}|t_0t_D|^{\frac{2K\theta}{(d+2)(d-1)}}\bigl(\prod_{k=1}^{D-1}|t_k|^{\frac{4K\theta}{(d+2)(d-1)}}\bigr)\,dt_D\,\cdots\, ds_1 \gtrsim 2^{-\eps m}(\alpha_j^m)^D \beta^D.
\end{align}
Furthermore, $(s_1,t_1,\ldots,s_D,t_D) \in \Omega_d$ implies that $t_i \in J^m_{j-1} \cup J_j^m \cup J^m_{j+1}$ for $0 \leq i \leq D$ and
\begin{gather}
\label{E:si-si-1 even}
|s_i-s_{i-1}| \gtrsim \frac{\alpha_j^m}{|t_{i-1}|^{\frac{2K\theta}{(d+2)(d-1)}}}, \qquad 2 \leq i \leq D,\\
\label{E:tl-ti even}
|t_l-t_i| \gtrsim \beta|t_i t_l|^{-\frac{K\theta}{(d+2)(d-1)}}, \qquad 0 \leq i < l \leq D-1,\\
\label{E:tD-t1 even}
|t_D-t_1| \gtrsim 2^{\frac12(1+\delta_{K\theta})m} \beta |t_D t_1|^{-\frac{K\theta}{(d+2)(d-1)}},\\
\label{E:tD-ti even}
|t_D-t_i| \gtrsim 2^{-\eps m}\beta |t_Dt_i|^{-\frac{K\theta}{(d+2)(d-1)}}, \qquad i=0,2,\ldots,D-1.
\end{gather}
where $\delta_{K\theta}$ is as in \eqref{E:I beta deltaK}.
\end{lemma}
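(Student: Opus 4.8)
The plan is to prove Lemma~\ref{L:mixed even} by the method of refinements, iterating $X_{\theta}$ and $X_{\theta}^*$ in the reduced form \eqref{E:new X}, \eqref{E:X*}. Throughout write $w = \tfrac{2K\theta}{(d+2)(d-1)}$, so that $d\mu_{\theta}(t) = |t|^w\,dt$, the cutoff interval $I_{\beta}$ from \eqref{E:I beta deltaK} is $[c\beta^{1/(1+w)},1]$, and the balancing identity $w\delta_{K\theta} = 1-\delta_{K\theta}$ holds; abbreviate $\alpha = \alpha_j^m$. Starting from $\langle X_{\theta}\chi_{E_j^m},\chi_{F_j^m}\rangle = \int_{F_j^m} X_{\theta}\chi_{E_j^m}$, we construct, for a well-chosen base point $(t_0,y_0)\in F_j^m$, a large set $\Omega_d$ of ``admissible chains'' $(s_1,t_1,\dots,s_D,t_D)$ along which the points $\bigl(s_i,\,y_{i-1}+s_iP(t_{i-1})\bigr)$ lie in $E_j^m$ and the points $(t_i,y_i)$, with $y_i := y_{i-1}+s_i\bigl(P(t_{i-1})-P(t_i)\bigr)$, lie in $F_j^m$. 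Since $y_D$ is precisely the second coordinate of $\Psi^d_{(t_0,y_0)}(s_1,t_1,\dots,s_D,t_D)$ in \eqref{E:def Psi even}, this yields $\Psi^d_{(t_0,y_0)}(\Omega_d)\subset F_j^m$, and the containments $t_i\in J^m_{j-1}\cup J^m_j\cup J^m_{j+1}$ hold because every $F_j^m$-point has first coordinate there. The iteration runs on two facts: by \eqref{E:betamj is beta}, $X_{\theta}^*\chi_{F_j^m}(s,x)\sim\beta$ on $E_j^m$, so each relevant $F_j^m$-fibre $\{t:(t,x-sP(t))\in F_j^m\}$ has $\mu_{\theta}$-measure $\sim\beta$ and lies inside $J^m_{j-1}\cup J^m_j\cup J^m_{j+1}\subset I_{\beta}$; and, after one pigeonholing refinement, $X_{\theta}\chi_{E_j^m}\gtrsim\alpha$ on a subset of $F_j^m$ carrying at least half the mass of $\langle X_{\theta}\chi_{E_j^m},\chi_{F_j^m}\rangle$, so the $E_j^m$-fibre $\{s:(s,y+sP(t))\in E_j^m\}$ over a good point has Lebesgue measure $\gtrsim\alpha/|t|^w$.

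The chain is built in $2D$ alternating steps, refining at each one. At an $X_{\theta}$-step we pass from a good $F_j^m$-point $(t_{i-1},y_{i-1})$ to the $E_j^m$-points above it, additionally discarding the $s_i$ within a small multiple of $\alpha/|t_{i-1}|^w$ of the previous $s_{i-1}$; that bad set is at most half the fibre, so we retain Lebesgue measure $\gtrsim\alpha/|t_{i-1}|^w$ and the surviving $s_i$ satisfy \eqref{E:si-si-1 even}. At an $X_{\theta}^*$-step we pass from a good $E_j^m$-point $(s_i,x_i)$, $x_i=y_{i-1}+s_iP(t_{i-1})$, to the $F_j^m$-points above it; for $i\le D-1$ we also discard each $t_i$ with $|t_i-t_l|$ below a small multiple of $\beta|t_it_l|^{-w/2}$ for some earlier $t_l$. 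The key point is that, since $t_l\gtrsim\beta^{\delta_{K\theta}}$ (from $t_l\in I_{\beta}$), each such forbidden set is in fact just a $\mu_{\theta}$-neighbourhood of $t_l$ of mass a small multiple of $\beta$ — it has no component away from $t_l$ — so discarding all of them costs only a constant and leaves \eqref{E:tl-ti even}. The choice of the last parameter $t_D$ is where the power of $2^m$ is extracted: we invoke Lemma~\ref{L:stop time} for the stopping-time interval $I_{(s_D,x_D)}$, of $\mu_{\theta}$-mass $\sim 2^m\beta$ and containing the valid $t_D$'s of $\mu_{\theta}$-mass $\sim\beta$, split it at its $\mu_{\theta}$-midpoint into the half meeting $t_1$ and its complement, and keep only $t_D$ in the complement. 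Lemma~\ref{L:stop time} preserves a fraction $\gtrsim 2^{-\eps m}$ of the valid $t_D$, and, since $I_{(s_D,x_D)}$ has $\mu_{\theta}$-mass $2^m\beta$ and hence Lebesgue width $\sim 2^{m\delta_{K\theta}}$ times its local scale, the separation of the two halves forces \eqref{E:tD-t1 even} — this is exactly where $w\delta_{K\theta}=1-\delta_{K\theta}$ makes the exponents balance — while discarding from this thinner set the $\mu_{\theta}$-neighbourhoods of the remaining $t_i$ costs the $2^{-\eps m}$ in \eqref{E:tD-ti even}.

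To obtain \eqref{E:omegad volume even}, integrate the fibre lower bounds in the nested order $s_1,t_1,\dots,s_D,t_D$: the weight $|t_0t_D|^w\prod_{k=1}^{D-1}|t_k|^{2w}$ is rigged so that the factor $|t_k|^{-w}$ coming from the $E_j^m$-fibre over $(t_k,\cdot)$ (i.e.\ from the $s_{k+1}$-integral) is absorbed by one power of $|t_k|^w$, the remaining power turning the $t_k$-integral over the $F_j^m$-fibre into $\mu_{\theta}$-mass $\sim\beta$; likewise $|t_0|^w$ absorbs the $|t_0|^{-w}$ from the $s_1$-fibre and $|t_D|^w$ turns the final $t_D$-integral into $\mu_{\theta}$-mass $\sim\beta$. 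This leaves $\gtrsim 2^{-\eps m}\alpha^D\beta^D$, with the sole $2^{-\eps m}$ from the stopping-time step. Running the construction with the base point free, the total admissible weighted volume is $\gtrsim 2^{-\eps m}\alpha^D\beta^D|F_j^m|$, so pigeonholing over the base point yields a point $(t_0,y_0)\in F_j^m$ over which the admissible chains form a set $\Omega_d$ of weighted volume $\gtrsim 2^{-\eps m}\alpha^D\beta^D$, as required.

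\textbf{Main obstacle.} The technical heart is the derivation of the $t$-separations: converting the raw $\mu_{\theta}$-measure bounds on fibres into the sharp inequalities \eqref{E:tl-ti even}--\eqref{E:tD-ti even} with the precise exponents $|t_it_l|^{-w/2}$, and, above all, extracting the genuine gain $2^{\frac12(1+\delta_{K\theta})m}$ in \eqref{E:tD-t1 even} (rather than a mere no-loss bound) from the non-concentration property of $I_{(s,x)}$ supplied by Lemma~\ref{L:stop time}. This demands a careful analysis of the $\mu_{\theta}$-geometry of the intervals $J^m_j$ and $I_{(s,x)}$ near $t=0$, where the weight degenerates and these intervals can be long on the multiplicative scale; the cutoff $I_{\beta}$ from \eqref{E:I beta deltaK} (forcing $t\gtrsim\beta^{\delta_{K\theta}}$) together with the identity $w\delta_{K\theta}=1-\delta_{K\theta}$ are exactly what make the bookkeeping close. (The hypothesis $\theta\ge\theta_0$ plays no role in this lemma; it enters only later, in Lemma~\ref{L:bound Fmj}.)
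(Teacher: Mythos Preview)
Your proposal is correct and follows essentially the same approach as the paper: a Christ-style tower of refined subsets of $E_j^m$ and $F_j^m$ (your ``refining at each step''), then a chain construction from a good base point, with the stopping-time interval $I_{(s_D,x_D)}$ from Lemma~\ref{L:stop time} invoked at the final $t_D$-step to extract the gain $2^{\frac12(1+\delta_{K\theta})m}$ in \eqref{E:tD-t1 even} via the inequality $|t_D-t_1|\gtrsim |I_{(s_D,x_D)}|$ and the geometry \eqref{E:sup gtrsim exp inf} of the $J^m_j$. The only cosmetic difference is that the paper picks $(t_0,y_0)$ directly from the deepest refined set $F^m_{j,D}$ rather than pigeonholing over the base point at the end; both routes are standard and equivalent.
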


\begin{lemma} \label{L:mixed odd}
If $d=2D+1\geq 3$ is odd, then there exist a point $(s_0,x_0) \in E_j^m$ and a set $\Omega_d \subset \R^d$ with $\Phi^d_{(s_0,x_0)}(\Omega_d) \subset F_j^m$ and
\begin{equation} \label{E:omegad volume odd}
\int_{\Omega_d} \bigl(\prod_{k=1}^D |t_k|^{\frac{4K\theta}{(d+2)(d-1)}}\bigr)|t_{D+1}|^{\frac{2K\theta}{(d+2)(d-1)}}\, dt_{D+1\,}ds_D\, \dots \,dt_1 \gtrsim 2^{-\eps m}(\alpha_j^m)^D \beta^{D+1}.
\end{equation}
Furthermore, $(t_1,s_1,\dots,t_{D+1}) \in \Omega_d$ implies that $t_i \in J^m_{j-1} \cup J_j^m \cup J^m_{j+1}$, for $1 \leq i \leq D+1$ and
\begin{gather}
\label{E:si-si-1 odd}
|s_i-s_{i-1}| \gtrsim \frac{\alpha_j^m}{|t_i|^{\frac{2K\theta}{(d+2)(d-1)}}}, \qquad 1 \leq i \leq D, \\
\label{E:tl-ti odd}
|t_l-t_i| \gtrsim \beta|t_i t_l|^{-\frac{K\theta}{(d+2)(d-1)}}, \qquad 0 \leq i < l \leq D,\\
\label{E:tD+1-t1 odd}
|t_{D+1}-t_1| \gtrsim 2^{\frac12(1+\delta_{K\theta})m} \beta |t_{D+1}t_1|^{-\frac{K\theta}{(d+2)(d-1)}},\\
\label{E:tD+1-ti odd}
|t_{D+1}-t_i| \gtrsim 2^{-\eps m}\beta |t_{D+1}t_i|^{-\frac{K\theta}{(d+2)(d-1)}}, \qquad 2 \leq i \leq D-1,
\end{gather}
where $\delta_{K\theta}$ is as in \eqref{E:I beta deltaK}.
\end{lemma}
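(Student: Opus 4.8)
The plan is to prove Lemma~\ref{L:mixed odd} by Christ's method of refinements, building the configuration $(t_1,s_1,\dots,t_{D+1})$ one coordinate at a time. Set $c'=\frac{2K\theta}{(d+2)(d-1)}$, so that for indicators one has $X_\theta\chi_E(t,y)=|t|^{c'}\chi_I(t)\,|\{s:(s,y+sP(t))\in E\}|$ and $X_\theta^*\chi_F(s,x)=\mu_\theta(\{t:(t,x-sP(t))\in F\})$. I would first carry out a preliminary two-sided refinement: starting from $E_j^m$ and $F_j^m$, alternately pass to the subset of the current $E$-set on which $X_\theta^*$ of the current $F$-set is $\ge c\beta$, and the subset of the current $F$-set on which $X_\theta$ of the current $E$-set is $\ge c\alpha_j^m$, for a small fixed $c=c(d,N,\theta)$, running the alternation $d$ times. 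Because $X_\theta^*\chi_F\le 2\beta$ on $E$ by the reduction in Lemma~\ref{L:reduce rwt}, and because the average of $X_\theta$ of the refined $E$-set over the previous $F$-set stays $\gtrsim\alpha_j^m$ (the bilinear mass $\langle X_\theta\chi_\cdot,\chi_\cdot\rangle$ drops only by a constant at each round and $|F_j^m|$ never increases), each refinement costs only a constant factor. I stress that I do \emph{not} need the refined $F$-sets to have measure comparable to $|F_j^m|$: I only use that the mass survives and that on each refined $F$-set one has $X_\theta$ of the companion $E$-set $\gtrsim\alpha_j^m$, which is exactly what pins the power in \eqref{E:omegad volume odd} at $(\alpha_j^m)^D$. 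After $d$ rounds the most refined $E$-set still has positive measure; I pick the base point $(s_0,x_0)$ there.

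Next I build the chain. Having reached a point $(s_i,x_i)$ in the relevant refined $E$-set, I select $t_{i+1}$ so that $(t_{i+1},x_i-s_iP(t_{i+1}))$ lies in the relevant refined $F$-set; the admissible set of $t_{i+1}$ has $\mu_\theta$-measure $\gtrsim\beta$. Having reached a point $(t_i,y_i)$ in a refined $F$-set, I select $s_i$ so that $(s_i,y_i+s_iP(t_i))$ lies in a refined $E$-set; the admissible set of $s_i$ has Lebesgue measure $\gtrsim\alpha_j^m|t_i|^{-c'}$. By construction the final point of the chain is precisely $\Phi^d_{(s_0,x_0)}(t_1,s_1,\dots,t_{D+1})$, lies in $F_j^m$, and every $t_i$ lies in $\Pi(F_j^m)\subset J_{j-1}^m\cup J_j^m\cup J_{j+1}^m$. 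Before each selection I discard a subset so as to enforce \eqref{E:si-si-1 odd}--\eqref{E:tD+1-ti odd}. For an $s$-coordinate this is just the interval of length $\sim\alpha_j^m|t_i|^{-c'}$ about $s_{i-1}$, a constant loss, giving \eqref{E:si-si-1 odd}. For a $t$-coordinate one removes small $\mu_\theta$-neighbourhoods of the earlier $t_\ell$ and uses the spread-out conclusion of Lemma~\ref{L:stop time}, applied to the stopping interval attached to the current base point in $E^m$ (of $\mu_\theta$-measure $\sim 2^m\beta$, inside which the admissible set has $\mu_\theta$-measure $\sim\beta$ and is spread), to see that a $\gtrsim 2^{-\eps m}\beta$ proportion survives; converting $\mu_\theta$-measure to Lebesgue distance on $[0,1]$ turns this into \eqref{E:tl-ti odd} and \eqref{E:tD+1-ti odd}. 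The improved separation \eqref{E:tD+1-t1 odd} between the first and last chosen $t$'s comes from placing $t_1$ and $t_{D+1}$ in essentially opposite parts of that stopping interval and then estimating the Lebesgue length of a $\mu_\theta$-interval of measure $\sim 2^m\beta$ sitting in the block $J_{j-1}^m\cup J_j^m\cup J_{j+1}^m$, on which $\mu_\theta$ has density $|t|^{c'}$ and $\delta_{K\theta}=(1+c')^{-1}$; this is where the stopping-time decomposition pays off.

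Let $\Omega_d\subset\R^d$ be the set of admissible $(t_1,s_1,\dots,t_{D+1})$ thus produced; then $\Phi^d_{(s_0,x_0)}(\Omega_d)\subset F_j^m$ and \eqref{E:si-si-1 odd}--\eqref{E:tD+1-ti odd} hold, and it remains to prove \eqref{E:omegad volume odd}. I would integrate the weight $\prod_{k=1}^D|t_k|^{2c'}\,|t_{D+1}|^{c'}$ over $\Omega_d$ in the order $dt_{D+1}\,ds_D\,dt_D\,\cdots\,ds_1\,dt_1$. The innermost $t_{D+1}$-integral equals the $\mu_\theta$-measure of the admissible (post-deletion) set of $t_{D+1}$, hence $\gtrsim 2^{-\eps m}\beta$; and inductively each $s_i$-integral is $\gtrsim\alpha_j^m|t_i|^{-c'}$, whose factor $|t_i|^{-c'}$ cancels exactly one of the two copies of $|t_i|^{c'}$ carried by the weight, so that the ensuing $t_i$-integral is again a $\mu_\theta$-integral over a set of $\mu_\theta$-measure $\gtrsim 2^{-\eps m}\beta$. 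Multiplying the $D$ pairs $(s_i,t_i)$ and the final $t_{D+1}$-factor gives $\gtrsim (2^{-\eps m}\beta)^{D+1}(\alpha_j^m)^D$, which after replacing $\eps$ by $\eps/(D+1)$ at the outset is \eqref{E:omegad volume odd}.

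The hard part is the separation bookkeeping, and above all the exponent $\tfrac12(1+\delta_{K\theta})$ in \eqref{E:tD+1-t1 odd}: when $m$ is of order $1$ the stopping interval has $\mu_\theta$-measure only $\sim\beta$, so there is no slack, and one genuinely needs the quantitative spread-out statement in Lemma~\ref{L:stop time} (with its $2^{-\eps m}$ loss) rather than a crude ``delete a short interval'' argument, with $\eps=\eps(d,N)$ chosen small enough that the losses incurred at the $O_d(1)$ selection steps compound to no worse than $2^{-\eps m}$; pinning down the precise exponent is the most delicate computation in the argument. The even case, Lemma~\ref{L:mixed even}, is handled in exactly the same way, except that the chain is started from a point $(t_0,y_0)\in F_j^m$ and built using $\Psi^d$ in place of $\Phi^d$; this accounts for the slightly different weight and separation exponents in its statement.
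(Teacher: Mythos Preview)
Your approach is essentially the paper's (which writes out only the even case, Lemma~\ref{L:mixed even}, and declares the odd case similar): alternating refinement of $E_j^m$ and $F_j^m$, a base point $(s_0,x_0)$ in the deepest refined $E$-set, the chain built one coordinate at a time with separations enforced by excising small sets, and the stopping interval invoked at the final $t$-selection to secure the $2^{\frac12(1+\delta_{K\theta})m}$ gain in \eqref{E:tD+1-t1 odd}. Two small corrections to your description: the stopping-time lemma is needed only at the \emph{last} $t$-step --- for intermediate $t_i$ ($i\le D$) you simply delete $\mu_\theta$-intervals of measure $\ll\beta$ about the earlier $t_\ell$, a constant loss that yields \eqref{E:tl-ti odd} with $\beta$ (not $2^{-\eps m}\beta$), so there is no need to rescale $\eps$ by $D+1$; and for \eqref{E:tD+1-t1 odd} the paper does not place $t_1$ in a stopping interval at all --- $t_1$ is chosen first from $S_{(s_0,x_0)}$, and only $t_{D+1}$ is restricted to the part of $S_{(s_D,x_D)}$ lying outside the half of $I_{(s_D,x_D)}$ nearest $t_1$, after which the exponent computation uses that $\min\{t_1,t_{D+1}\}\gtrsim 2^{-m\delta_{K\theta}}\sup I_{(s_D,x_D)}$ (from \eqref{E:sup gtrsim exp inf}) together with $|I_{(s_D,x_D)}|\cdot(\sup I_{(s_D,x_D)})^{c'}\sim 2^m\beta$.
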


We only give a proof in the even dimensional case, the odd dimensional case being similar.

\begin{proof}[Proof of Lemma~\ref{L:mixed even}] 
We begin by refining $E^m_j,F^m_j$. By arguments which by now have appeared many times in the literature (cf.\ \cite{CCC}), there exist sets
\begin{align*}
&\emptyset \neq F^m_{j,D} \subseteq F^m_{j,D-1} \subseteq \cdots \subseteq F^m_{j,0} = F^m_j \\
&\emptyset \neq E^m_{j,D} \subseteq E^m_{j,D-1} \subseteq \dots \subseteq E^m_{j,0} = E^m_j
\end{align*}
so $(t,y) \in F^m_{j,k}$ implies that $X_\theta \chi_{E^m_{j,k-1}}(t,y) \gtrsim \alpha_j^m$ and $(s,x) \in E^m_{j,k}$ implies that $X_\theta^* \chi_{F^m_{j,k}}(s,x) \gtrsim \frac{\mathcal{X}(E_j^m,F_j^m)}{|E_j^m|}=:\beta_m^j \sim \beta$.  In other words
\begin{align}
\label{E:t,y in Fmjk} 
(t,y) \in F^m_{j,k} &\implies |\{s \in \R : (s,y+sP(t)) \in E^m_{j,k-1}\}| \gtrsim \frac{\alpha^m_j}{t^{\frac{2K\theta}{(d+2)(d-1)}}} \\
\label{E:s,x in Emjk}
(s,x) \in E^m_{j,k} &\implies \mu_{\theta}(\{t \in I : (t,x-sP(t)) \in F^m_{j,k}\}) \gtrsim \beta^m_j.
\end{align}
Furthermore, if $(t,x-sP(t)) \in F^m_{j,k}$, then we in fact have 
$$
t \in \Pi(F^m_{j,k}) \subseteq \Pi(F^m_j) \subseteq J^m_{j-1} \cup J^m_j \cup J^m_{j+1}.
$$

Let $s \in \R$ and $t \in I_{\beta}$. Then it is obvious that
$$
|\{s' \in \R : |s'-s| \ll \frac{\alpha^m_j}{t^{\frac{2K\theta}{(d+2)(d-1)}}}\}| \ll \frac{\alpha^m_j}{t^{\frac{2K\theta}{(d+2)(d-1)}}}.
$$
Next we consider the interval 
$$
J_t = [t-c\frac\beta{t^{\frac{2K\theta}{(d+2)(d-1)}}}, t+c\frac\beta{t^{\frac{2K\theta}{(d+2)(d-1)}}}], 
$$
where $c$ is a small constant satisfying the following.  First, since $t \gtrsim \beta^{\delta_K}$ by \eqref{E:I beta deltaK}, we may choose $c$ sufficiently small that $t \in I_{\beta}$ and $t' \in J_t$ implies $t \sim t'$.  Second, $\mu_\theta(J_t) \sim (\sup J_t)^{\frac{2K\theta}{(d+2)(d-1)}}|J_t|$, and since $\sup J_t \in I_\beta$, making $c$ smaller if necessary, $\mu_{\theta}(J_t) \ll \beta$.  With $c$ fixed, $t' \notin J_t$ implies that 
$$
|t'-t| \gtrsim \frac\beta{(tt')^{\frac{2K\theta}{(d+2)(d-1)}}}.
$$

Now let $(t_0,y_0) \in F^m_{j,D}$.  We define
$$
\Omega_1 = \{s_1 \in \R : \Psi^1_{(t_0,y_0)}(s_1) \in E^m_{j,D-1}\}
$$
and then recursively define
\begin{align*}
\Omega_{2k} &= \bigl\{(s_1,t_1,\cdots,s_k,t_k) \in \Omega_{2k-1} \times \R : \Psi^{2k}_{(t_0,y_0)}(s_1,t_1,\cdots,t_k) \in F^m_{j,D-k}, \\ &\qquad \qquad t_k \notin J_{t_i}, 0 \leq i \leq k-1\bigr\}, \\
\Omega_{2k+1} &= \bigl\{(s_1,t_1,\cdots,t_k,s_{k+1}) \in \Omega_{2k} \times \R : \Psi^{2k+1}_{(t_0,y_0)}(s_1,t_1,\cdots,s_{k+1}) \in E^m_{j,D-k},  \\
&\qquad \qquad |s_{k+1}-s_k| \gtrsim \frac{\alpha^m_j}{t_k^{\frac{2K\theta}{(d+2)(d-1)}}}\bigr\}, 
\end{align*}
for $1 \leq k \leq D-1$.

To define $\Omega_d$, we must argue a little differently.  Let $(s_1,t_1,\cdots,s_D) \in \Omega_{d-1}$ and define $x_D$ so that $(s_D,x_D) = \Psi^{d-1}_{(t_0,y_0)}(s_1,t_1,\dots,s_D)$.  We observe that
$$
\mu_\theta(\{t \in S_{(s_D,x_D)} : |t-t_1| \ll |I_{(s_D,x_D)}|\}) \ll \mu_\theta(I_{(s_D,x_D)}),
$$
so by the way the intervals $I_{(s,x)}$ were defined (cf.\ Lemma~\ref{L:stop time}), we have
$$
\mu_\theta(\{t \in S_{(s_D,x_D)} : |t-t_1| \gtrsim |I_{(s_D,x_D)}|\}) \gtrsim 2^{-\eps m}\beta.
$$
Furthermore,
$$
\mu_\theta(\bigcup_{i=2}^{D-1} [t_i - c\frac{2^{-\eps m}\beta}{t_i^{\frac{2K\theta}{(d+2)(d-1)}}}, t_i + c\frac{2^{-\eps m}\beta}{t_i^{\frac{2K\theta}{(d+2)(d-1)}}}]) \ll 2^{-\eps m}\beta.
$$
Therefore if we define 
\begin{align*}
\Omega_d &= \{(s_1,t_1,\dots,s_D,t_D) \in \Omega_{d-1} \times \R : t_D \in S_{(s_D,x_D)}, \,\,\, |t_D-t_1| \gtrsim |I_{(s_D,x_D)}|, \\ 
&\qquad \qquad \qquad 
\text{and}\,\,\,\, |t_D-t_i| \gtrsim \frac{2^{-\eps m}\beta}{t_i^{\frac{2K\theta}{(d+2)(d-1)}}}, \,\, i=0,2,\ldots,D-1\},
\end{align*}
then $\Omega_d$ satisfies the volume lower bound \eqref{E:omegad volume even}.  We have already shown that \eqref{E:si-si-1 even} and \eqref{E:tl-ti even} hold on $\Omega_d$, and \eqref{E:tD-ti even} is proved in the same way as \eqref{E:tl-ti even}.

We finally turn to \eqref{E:tD-t1 even}.  Let $(s_1,t_1,\dots,t_D) \in \Omega_d$, and define $x_D$ as above.  For each $1 \leq j \leq M$, the requirements $\mu_\theta(J^m_j) \sim 2^m\beta$ and $J^m_j \subset I_\beta$ imply that 
\begin{equation} \label{E:sup gtrsim exp inf}
\inf(J^m_j) \gtrsim 2^{-m\delta_{K\theta}}\sup(J^m_j).
\end{equation}
Indeed, if $\sup (J^m_j) \geq 2 \inf(J^m_j)$, then
$$
\sup(J^m_j)^{\frac{2K\theta}{(d+2)(d-1)}+1} \sim \sup(J^m_j)^{\frac{2K\theta}{(d+2)(d-1)}}|J^m_j| \sim \mu_\theta(J^m_j) \sim 2^m \beta,
$$
from which we deduce that 
$$
\sup(J^m_j) \lesssim (2^m\beta)^{\delta_{K\theta}} \lesssim 2^{m\delta_{K\theta}}\inf(J^m_j),
$$
since $J^m_j \subset I_\beta$.  In fact, a similar argument shows that 
\begin{equation} \label{E:sup sim inf}
\inf(J^m_j) \sim \sup(J^m_j), \qquad j \geq 2,
\end{equation}
because in this case, 
$$
\inf(J^m_j) \gtrsim \sup(J^m_1) \gtrsim (2^m\beta)^{\delta_{K\theta}}.
$$

By \eqref{E:sup gtrsim exp inf}, since $t_1,t_D \in J^m_{j-1} \cup J^m_j \cup J^m_{j+1}$ and $I_{(s_D,x_D)} \subset J^m_{j-1} \cup J^m_j \cup J^m_{j+1}$, we have
$$
\min\{t_1,t_D\} \gtrsim 2^{-m\delta_{k\theta}}\sup(J^m_{j-1}\cup J^m_j \cup J^m_{j+1}) \gtrsim 2^{-m\delta_{k\theta}} \sup I_{(s_D,x_D)}.
$$
Additionally, since $|t_1-t_D| \gtrsim |I_{(s_D,x_D)}|$ and $t_1,t_D > 0$, we have
$$
\max\{t_1,t_D\} \gtrsim \sup I_{(s_D,x_D)}.
$$
We may therefore conclude that 
\begin{align*}
&|t_1-t_D|(t_1t_D)^{\frac{K\theta}{(d+2)(d-1)}} \gtrsim |I_{(s_D,x_D)}|2^{-m\delta_{K\theta} \frac{K\theta}{(d+2)(d-1)}}(\sup I_{(s_D,x_D)})^{\frac{2K\theta}{(d+2)(d-1)}} \\
&\qquad \qquad 
\sim 2^{-m\delta_{K\theta}\frac{K\theta}{(d+2)(d-1)}}\mu_\theta(I_{(s_D,x_D)}) \sim 2^{m(1-\delta_{K\theta}\frac{K\theta}{(d+2)(d-1)})}\beta = 2^{\frac12(1 + \delta_{K\theta})m}\beta.
\end{align*}
This completes the proof of \eqref{E:tD-t1 even} and thus of the lemma.
\end{proof}

Now we are ready to prove Lemma~\ref{L:bound Fmj}.

\begin{proof}[Proof of Lemma~\ref{L:bound Fmj}]
We give the details of the necessary computation when $d=2D$.  Since $\Psi^d_{(t_0,y_0)}$ is a polynomial, a standard application of Bezout's theorem gives
$$
|F^m_j| \geq |\Psi^d_{(t_0,y_0)}(\Omega_d)| \gtrsim \int_{\Omega_d} |\det D\Psi^d_{(t_0,y_0)}(s_1,t_1,\dots,s_D,t_D)|\, dt_D\, \dots\, ds_1.
$$
Thus by Proposition~\ref{P:jacobian bounds}, the lower bounds (\ref{E:si-si-1 even}-\ref{E:tD-ti even}), and a bit of arithmetic,
\begin{align*}
|F^m_j| &\gtrsim \int_{\Omega_d} \prod_{i=1}^{D-1}\bigl\{|s_{i+1}-s_i||t_i|^{\frac{K}D}\prod_{\stackrel{0 \leq j \leq D}{j\neq i}}|t_j-t_i|^2\bigr\} |t_0t_D|^{\frac{K}{2D}}|t_D-t_0|\, dt_D\, \dots\, ds_1\\
&\gtrsim \int_{\Omega_d} (\alpha^m_j)^{D-1}\beta^{2D(D-1)+1}2^{m(1+\delta_{K\theta} - \eps(d-2))}|t_0t_D|^{\frac{K}d - \frac{K\theta}{d+2}} \prod_{i=1}^{D-1}|t_i|^{\frac{2K}d - \frac{2K\theta}{d+2}}.
\end{align*}
To take advantage of \eqref{E:omegad volume even}, we use the fact that $t_i \leq T^m_j$ (by \eqref{E:sup sim inf}, since $t_i \in J^m_{j-1} \cup J^m_j \cup J^m_{j+1}$), and obtain
\begin{align*}
|F^m_j| &\gtrsim (\alpha^m_j)^{D-1} \beta^{2D(D-1)+1} 2^{m(1+\delta_{K\theta} - \eps(d-2))}(T^m_j)^{-K(\frac{d(d+1)\theta}{(d+2)(d-1)} - 1)}\\ & \qquad \qquad  \times\int_{\Omega_d}|t_0t_D|^{\frac{2K\theta}{(d+2)(d-1)}} \prod_{i=1}^{D-1}|t_i|^{\frac{4K\theta}{(d+2)(d-1)}}\, dt_D \, \dots \, ds_1 \\
&\gtrsim (\alpha^m_j)^{d-1} \beta^{\frac{d^2-d+2}2} 2^{m(1+\delta_{K\theta} - \eps(d-3))}(T^m_j)^{-K(\frac{d(d+1)\theta}{(d+2)(d-1)} - 1)}.
\end{align*}
Now we use the fact that $|J^m_j|(T^m_j)^{\frac{2K\theta}{(d+2)(d-1)}} \sim 2^m \beta$ and a bit of arithmetic to see that if $\eps$ is sufficiently small, then
\begin{align*}
|F^m_j| &\gtrsim (\alpha^m_j)^{d-1} \beta^{\frac{(d+2)(d-1)}{2\theta} - (d-1)} 2^{m(\delta_{K\theta} + \frac{(d+2)(d-1)}2 (\frac1\theta - 1)-\eps(d-2))} |J^m_j|^{\frac{d(d+1)}2 - \frac{(d+2)(d-1)}{2\theta}} \\
&= 2^{m\rho} \alpha^{\frac1{p_\theta'}(\frac1{r_\theta'}-\frac1{p_\theta'})^{-1}} \beta^{\frac1{p_\theta}(\frac1{r_\theta'} - \frac1{p_\theta'})^{-1}}|J^m_j|^{(\frac1{r_\theta'}-\frac1{q_\theta'})(\frac1{r_\theta'}-\frac1{p_\theta'})^{-1}}.
\end{align*}
This completes the proof.
\end{proof}


\section{Interpolation} \label{S:Interpolation}


If we were working with unweighted, non-mixed $L^p \to L^q$ bounds with $(p^{-1},q^{-1})$ lying on a line segment, then strong-type bounds away from the endpoints would follow from the restricted weak type bounds at the endpoints via Marcinkiewicz interpolation.  The mixed norms seem to present particular difficulties.  Although there are cases (such as in \cite{Milman81}) where interpolation of multiple inequalities has been used to deduce strong type mixed norm estimates from restricted weak type bounds, the authors are unaware of an analogue of Marcinkiewicz interpolation that applies in the current situation, where the triples $(p^{-1},q^{-1},r^{-1})$ under consideration all lie on a single line segment.  (A closely related issue is discussed in \cite[Sections 4 and 5]{KeelTao}.)

Despite these difficulties, in this section we will use interpolation to obtain an improvement, albeit a non-optmal one, for non-endpoint values of $\theta$.  We will use this interpolation step later on in the proof of Theorem~\ref{T:main}.  

We continue with the simplifications in Lemma~\ref{L:reductions} in place.  Let $k \geq 0$, and define an operator $X^{-k}_\theta$ by 
\begin{equation} \label{E:Xk}
\begin{aligned}
X_\theta^{-k} f(t,y) &= X_\theta f(t,y) \chi_{(2^{-k-1},2^{-k}]}(t) \\&= \int_\R f(s,y+sP(t))|t|^{\frac{2K\theta}{(d+2)(d-1)}}\chi_{I \cap (2^{-k-1},2^{-k}]}(t)\, ds.
\end{aligned}
\end{equation}

\begin{proposition} \label{P:interpolated Xray}
For $\theta_0 < \theta < 1$ and $k \geq 0$, the operator $X^{-k}_\theta$ defined by \eqref{E:Xk} is a bounded operator from $L^{p_\theta,1}$ to $L^{q_\theta,\infty}(L^{r_\theta})$.  
\end{proposition}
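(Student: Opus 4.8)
The plan is to deduce Proposition~\ref{P:interpolated Xray} by real interpolation of the restricted weak-type bounds of Proposition~\ref{P:rwt} between the two parameters $\theta_0$ and $1$; the dyadic truncation is essential here because it renders the weight comparable to a constant, which is what lets us interpolate a \emph{single} operator. On the slab $t\in(2^{-k-1},2^{-k}]$ we have $|t|^{\frac{2K\theta'}{(d+2)(d-1)}}\sim 2^{-\frac{2kK\theta'}{(d+2)(d-1)}}=:c_k(\theta')$ for every $\theta'$, so that $X^{-k}_{\theta'}$ coincides, up to the factor $c_k(\theta')$, with the unweighted truncated operator $X^{-k}_0$ (i.e.\ \eqref{E:Xk} with $\theta=0$). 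Since $X^{-k}_{\theta'}\chi_E\le X_{\theta'}\chi_E$ pointwise, Proposition~\ref{P:rwt} gives, for each $\theta'\in\{\theta_0,1\}$, every $E$, and every $F\subset\Pi^{-1}((2^{-k-1},2^{-k}])$ (which costs nothing, as $X^{-k}_0\chi_E$ is supported in the slab),
\[
\langle X^{-k}_0\chi_E,\chi_F\rangle\;\lesssim\;c_k(\theta')^{-1}\,|E|^{\frac1{p_{\theta'}}}\,\|\chi_F\|_{L^{q_{\theta'}'}(L^{r_{\theta'}'})},
\]
uniformly in $k$.

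Now fix $\theta\in(\theta_0,1)$ and write $\theta=(1-\tau)\theta_0+\tau$ with $\tau\in(0,1)$. Because $\theta'\mapsto(p_{\theta'}^{-1},q_{\theta'}^{-1},r_{\theta'}^{-1})$ is affine (see \eqref{E:def p}), interpolating at parameter $\tau$ between the triples at $\theta_0$ and at $1$ lands exactly on $(p_\theta^{-1},q_\theta^{-1},r_\theta^{-1})$, and $c_k(\theta_0)^{-(1-\tau)}c_k(1)^{-\tau}=c_k(\theta)^{-1}$. Recall moreover that $p_{\theta'}\le q_{\theta'}$ throughout $[\theta_0,1]$ (since $\theta_0=\frac{d^2+d-2}{d^2+d}$) and that $p_{\theta_0}\ne p_1$, $q_{\theta_0}\ne q_1$. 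The proposition then follows once we establish the one substantial lemma of the section: an off-diagonal, mixed-norm interpolation principle asserting that if $T$ is a positive operator with $\langle T\chi_E,\chi_F\rangle\lesssim A_i|E|^{1/p_i}\|\chi_F\|_{L^{q_i'}(L^{r_i'})}$ for $i=0,1$, the triples $(p_i^{-1},q_i^{-1},r_i^{-1})$ being distinct with $p_i\le q_i$, then $\|Tf\|_{L^{q_\tau,\infty}(L^{r_\tau})}\lesssim A_0^{1-\tau}A_1^{\tau}\|f\|_{L^{p_\tau,1}}$ for all $0<\tau<1$. Applying this with $T=X^{-k}_0$ and multiplying through by $c_k(\theta)$ gives $\|X^{-k}_\theta f\|_{L^{q_\theta,\infty}(L^{r_\theta})}\lesssim\|f\|_{L^{p_\theta,1}}$, as desired.

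For the proof of the interpolation principle I would avoid the abstract real method: with a mixed-norm target $L^{q,\infty}(L^r)$ whose two endpoint inner exponents $r_0\ne r_1$ genuinely differ, the $K$-functional does not reassemble into $L^{r_\tau}$, and this is exactly the obstruction discussed in \cite[Sections 4--5]{KeelTao}. Instead I would argue directly with a bilinear atomic decomposition: test the target norm against $g$ in the predual $L^{q_\tau',1}(L^{r_\tau'})$, decompose $f\ge0$ by size as $f=\sum_n 2^n\chi_{E_n}$ with $\sum_n 2^n|E_n|^{1/p_\tau}\lesssim\|f\|_{L^{p_\tau,1}}$, and decompose $g\ge0$ not merely by size but by size together with the measure of the $t$-slices of its superlevel sets, so that each atom $\chi_{F_{m,\ell}}$ has all $t$-slices of comparable size $\sim2^{\ell}$ and hence a transparent value $\|\chi_{F_{m,\ell}}\|_{L^{q_i'}(L^{r_i'})}\sim|F_{m,\ell}|^{1/q_i'}2^{\ell(1/r_i'-1/q_i')}$ in each of the two endpoint norms. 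Inserting both decompositions into the bilinear form, splitting the resulting multiple sum at a cut-off dictated by $A_0/A_1$ and the atom sizes, applying the $i=0$ hypothesis on one side of the cut-off and the $i=1$ hypothesis on the other, and summing — the series are geometric and converge precisely because $p_0\ne p_1$ and $q_0\ne q_1$, while the $\ell^1$ structure of the two Lorentz decompositions is what lets the estimate close — yields the claim. The main obstacle is this bookkeeping: arranging the decomposition of $g$ so that the two endpoint mixed norms and the target mixed norm can all be read off the same family of atoms. Once that is in place, the split-and-sum step is the classical off-diagonal Marcinkiewicz argument.
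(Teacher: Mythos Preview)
Your approach matches the paper's: truncate dyadically so the weight is essentially constant, apply Proposition~\ref{P:rwt} to $X_0^{-k}$ at the two parameters $\theta_0$ and $1$, and then interpolate by a direct bilinear atomic argument (this is exactly the paper's Lemma~\ref{L:interpolation}) rather than by abstract real methods. The only organizational difference is that the paper first uses the outer $L^{q_\theta',1}$ structure to reduce to $g$ with $\|g(t,\cdot)\|_{L^{r_\theta'}}\sim A$ constant on its $t$-support, leaving a \emph{single} height sum $g=\sum_j 2^j\chi_{F_j}$ to split at one index; the two geometric tails then converge because $r_{\theta_0}'>r_\theta'>r_1'$, so it is the gap in the \emph{inner} exponents ($r_{\theta_0}\ne r_1$), not $p_0\ne p_1$ or $q_0\ne q_1$, that makes the split-and-sum close---a small misdiagnosis in your sketch, harmless here since all three pairs of exponents differ.
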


This proposition follows from Proposition~\ref{P:rwt} and by applying the following lemma to $X_0^{-k}$ with 
$$
(s_0,u_0,v_0) = (p_{\theta_0},q_{\theta_0},r_{\theta_0}), \qquad (s_1,u_1,v_1) = (p_1,q_1,r_1)
$$
and $M_0 = 2^{\frac{k(2K\theta_0)}{(d+2)(d-1)}}$, $M_1 = 2^{\frac{k(2K)}{(d+2)(d-1)}}$.

\begin{lemma} \label{L:interpolation}  Let $T$ be a positive operator satisfying the restricted weak-type bounds
$$
\langle T\chi_E,\chi_F \rangle \le M_j |E|^{\frac1{s_j}}\|\chi_F\|_{L^{u_j'}(L^{v_j'})}, \qquad j=0,1,
$$
for all measurable sets $E,F \subset \R^d$, where $1 < s_j,u_j,v_j < \infty$, $j=0,1$ and $v_0 < v_1$.  Then $T$ satisfies the mixed Lorentz space bound
\begin{equation} \label{E:interpolated}
|\langle T f, g \rangle| \lesssim M_0^{1-\theta}M_1^{\theta}\|f\|_{L^{s,1}} \|g\|_{L^{u',1}(L^{v'})},
\end{equation}
for any measurable functions $f,g$ and any triple $(s,u,v)$ satisfying
\begin{equation} \label{E:suv}
(s^{-1},u^{-1},v^{-1}) = (1-\theta)(s_0^{-1},u_0^{-1},v_0^{-1}) + \theta(s_1^{-1},u_1^{-1},v_1^{-1}), \quad 0 < \theta < 1.
\end{equation}
\end{lemma}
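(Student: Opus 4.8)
The plan is to reduce the mixed-norm Lorentz estimate to a real-interpolation statement about a single scalar family of sublinear functionals, and then invoke the classical off-diagonal Marcinkiewicz interpolation theorem. First I would observe that since $T$ is a positive operator, the bilinear form $\langle Tf,g\rangle$ is monotone in $|f|$ and $|g|$, so it suffices to prove \eqref{E:interpolated} for $f=\chi_E$ and $g=\chi_F$; the general case then follows from the standard expansion of a Lorentz norm of a function into a sum over its level sets and the corresponding density/decomposition argument (cf.\ the passage from restricted weak-type to strong type for the Lorentz scale). So the core of the matter is the inequality
$$
\langle T\chi_E,\chi_F\rangle \lesssim M_0^{1-\theta}M_1^\theta\, |E|^{\frac1s}\, \|\chi_F\|_{L^{u',1}(L^{v'})},
$$
and one checks that $\|\chi_F\|_{L^{u',1}(L^{v'})}\sim \|\chi_F\|_{L^{u'}(L^{v'})}$ for characteristic sets, because the outer exponent $v'$ governs the inner $L^{v'}$ norm over a slice, while the $L^{u',1}$ structure on the $t$-variable is measured against the one-dimensional distribution function of $t\mapsto \|\chi_{F_t}\|_{L^{v'}}$, a bounded function supported on $\Pi(F)$.

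Second, the key reduction: fix $E$ and regard the map
$$
S: F \longmapsto \langle T\chi_E,\chi_F\rangle
$$
as a set function, and more usefully fix $E$ and view $\chi_E \mapsto T\chi_E$ but dualize so that the relevant object is $U\colon \chi_F\mapsto T^*\chi_F$ together with the pairing against $\chi_E$. The two hypotheses say that for $j=0,1$,
$$
\langle \chi_E, T^*\chi_F\rangle \le M_j |E|^{1/s_j}\|\chi_F\|_{L^{u_j'}(L^{v_j'})}.
$$
Holding $F$ fixed and varying $E$, this is a statement that the linear functional $E\mapsto \langle\chi_E,T^*\chi_F\rangle$ has $L^{s_j',\infty}$ norm at most $M_j\|\chi_F\|_{L^{u_j'}(L^{v_j'})}$, i.e.\ $\|T^*\chi_F\|_{L^{s_j',\infty}}\le M_j\|\chi_F\|_{L^{u_j'}(L^{v_j'})}$. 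Now the pair $(L^{u_0'}(L^{v_0'}),L^{u_1'}(L^{v_1'}))$ is a Banach couple, and because $v_0<v_1$ (hence $v_0'>v_1'$) the real interpolation spaces of this mixed-norm couple are again mixed-norm Lorentz spaces: by the standard iteration of real interpolation in the two variables separately (first the inner $L^{v}$ over $\R^{d-1}$, then the outer $L^u$ over $\R$ — this is exactly the Fubini/retract argument for mixed-norm spaces, and the hypothesis $v_0\ne v_1$ is what makes the inner coordinate nontrivial so the off-diagonal theorem applies), one gets
$$
\bigl(L^{u_0'}(L^{v_0'}),\,L^{u_1'}(L^{v_1'})\bigr)_{\theta,1}\hookrightarrow L^{u',1}(L^{v'}),
$$
with $(u^{-1},v^{-1})$ the convex combination in \eqref{E:suv}. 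Meanwhile the target side is the Lorentz couple $(L^{s_0',\infty},L^{s_1',\infty})_{\theta,\infty}\supset L^{s',\infty}$ in the appropriate sense. Applying the off-diagonal real interpolation theorem (Marcinkiewicz/Calder\'on) to the sublinear map $\chi_F\mapsto T^*\chi_F$, with operator norm $M_j$ at the two endpoints, yields
$$
\|T^*\chi_F\|_{L^{s',\infty}} \lesssim M_0^{1-\theta}M_1^\theta \|\chi_F\|_{L^{u',1}(L^{v'})},
$$
which is exactly the dualized form of the desired restricted-type bound; undualizing (pair against $\chi_E$ and take the supremum defining the weak norm) gives $\langle T\chi_E,\chi_F\rangle\lesssim M_0^{1-\theta}M_1^\theta|E|^{1/s}\|\chi_F\|_{L^{u',1}(L^{v'})}$.

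Third and last, I would upgrade from characteristic functions to general $f,g$: decompose $f=\sum_k 2^k\chi_{E_k}$ and $g=\sum_\ell 2^\ell \chi_{F_\ell}$ along dyadic level sets (with $|E_k|$, $\|\chi_{F_\ell}\|_{L^{v'}}$-tails controlled by the respective Lorentz norms), apply the characteristic-function estimate to each pair $(E_k,F_\ell)$, and sum the resulting geometric-type series; the exponents $1$ in $L^{s,1}$ and $L^{u',1}(L^{v'})$ are precisely what make the double sum converge, exactly as in the classical proof that restricted-type at an interior point implies strong Lorentz bounds. The main obstacle — and the reason this is isolated as a separate lemma rather than quoted — is justifying the interpolation identity for the \emph{mixed-norm} couple on the target/source side: one must be careful that real interpolation commutes with the iterated $L^u(L^v)$ construction, which is where the hypotheses $1<s_j,u_j,v_j<\infty$ and especially $v_0\ne v_1$ enter (if $v_0=v_1$ the inner space is fixed and the couple degenerates, which is the genuinely hard diagonal case the authors explicitly cannot handle). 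Everything else is bookkeeping with distribution functions.
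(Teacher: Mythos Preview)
Your reduction in step~1 to $g=\chi_F$ is where the argument breaks. For the mixed Lorentz norm $L^{u',1}(L^{v'})$, the outer Lorentz index $1$ refers to the distribution function of $t\mapsto\|g(t,\cdot)\|_{L^{v'}}$, not to the distribution function of $g$ itself. If you decompose a general $g$ as $\sum_j 2^j\chi_{F_j}$ along its own level sets and apply the characteristic-function bound termwise, you are left with $\sum_j 2^j\|\chi_{F_j}\|_{L^{u',1}(L^{v'})}$, and this quantity is \emph{not} controlled by $\|g\|_{L^{u',1}(L^{v'})}$: the former is an $\ell^1$-type combination in $j$ of the slice norms, while the latter combines them in $\ell^{v'}$ on each fibre before taking the outer Lorentz norm. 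So the ``standard expansion'' you invoke does not close. The paper's proof makes a different reduction: it assumes $f=\chi_E$ but takes $g=\sum_j 2^j\chi_{F_j}$ with the additional normalization $\|g(t,\cdot)\|_{L^{v'}}\sim A$ for all $t$ in the support. This constancy is exactly what lets one compare $2^j\|\chi_{F_j}\|_{L^{u_i'}(L^{v_i'})}$ to a fixed quantity times a geometric factor $2^{j(1-v'/v_i')}$, after which one splits $\sum_j$ at a threshold $n$, applies the $i=0$ hypothesis below $n$ and the $i=1$ hypothesis above $n$, and optimizes in $n$.

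Your step~2 has a related problem: even granting the weak-type bound $\|T^*\chi_F\|_{L^{s_j',\infty}}\le M_j\|\chi_F\|_{L^{u_j'}(L^{v_j'})}$, you only know this on characteristic functions, so there is no linear or sublinear operator on the couple $\bigl(L^{u_0'}(L^{v_0'}),L^{u_1'}(L^{v_1'})\bigr)$ to which an abstract real-interpolation theorem applies. Upgrading to general inputs is precisely the issue above. Moreover, the embedding $\bigl(L^{u_0'}(L^{v_0'}),L^{u_1'}(L^{v_1'})\bigr)_{\theta,1}\hookrightarrow L^{u',1}(L^{v'})$ you assert is not a standard fact; identification of real-interpolation spaces of mixed-norm couples with all three indices moving is delicate, and indeed the paper remarks just before this lemma that no Marcinkiewicz-type theorem is known to handle this one-line-segment situation. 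The direct splitting argument the paper gives avoids both of these obstacles and, incidentally, yields the explicit quasi-extremal information used downstream in Lemma~\ref{L:quasiex interp}.
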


The proof this lemma is along standard lines, but we give the details to facilitate our understanding of the quasi-extremizers for \eqref{E:interpolated}.

\begin{proof}  Before beginning, we recall that if $f$ is a measurable function with $|f| \sim \sum_j 2^j \chi_{E_j}$, where the $E_j$ are pairwise disjoint measurable sets, then 
$$
\|f\|_{L^{s,1}} \sim \sum_j 2^j |E_j|^{\frac1s}.
$$
Thus by linearity and positivity of $T$, it suffices to prove the bound \eqref{E:interpolated} when $f = \chi_E$ for some measurable set $E$ and $g = \sum_j 2^j \chi_{F_j}$ for pairwise disjoint measurable sets $F_j$ satisfying 
\begin{equation} \label{E:Lv' const}
\|\sum_j 2^j \chi_{F_j}(t,\cdot)\|_{L^{v'}} \sim A, \qtq{for all $t \in F^{\flat} := \Pi\bigl(\bigcup_j F_j\bigr)$,}
\end{equation}
for some $A > 0$.  We note that our assumption implies that
$$
\|g\|_{L^{u',1}(L^{v'})} \sim \|g\|_{L^{u'}(L^{v'})} \sim A|F^\flat|^{\frac1{u'}}.
$$

To prove \eqref{E:interpolated} with $f$ and $g$ as above, we let $n$ be an integer (whose value will be determined in a moment) and decompose
$$
\langle T \chi_E, \sum_j 2^j \chi_{F_j} \rangle = \langle T \chi_E, \sum_{j=-\infty}^n 2^j F_j \rangle + \langle T \chi_E, \sum_{j=n+1}^{\infty} 2^j \chi_{F_j} \rangle = \Sigma_0 + \Sigma_1.
$$

First we bound $\Sigma_0$.  By assumption, we have
$$
\Sigma_0 \leq M_0 |E|^{\frac1{s_0}} \sum_{j=-\infty}^n 2^j \|\chi_{F_j}\|_{L^{u_0'}(L^{v_0'})}.
$$
With $-\infty < j \leq n$ fixed, we have by \eqref{E:Lv' const} that
\begin{align} \label{E:sigma0 Fj} 
\begin{aligned}
2^j \|\chi_{F_j}\|_{L^{u_0'}(L^{v_0'})} &= \bigl(\int 2^{ju_0'(1-\frac{v'}{v_0'})}(2^{jv'}|\Pi^{-1}(t) \cap F_j|)^{\frac{u_0'}{v_0'}}\, dt \bigr)^{\frac1{u_0'}} \\
&\lesssim 2^{j(1-\frac{v'}{v_0'})}A^{\frac{v'}{v_0'}} |F^{\flat}|^{\frac1{u_0'}} \sim 2^{j(1-\frac{v'}{v_0'})}A^{\frac{v'}{v_0'} - \frac{u'}{u_0'}}\|g\|_{L^{u'}(L^{v'})}^{\frac{u'}{u_0'}}.
\end{aligned}
\end{align}
Since $v_0 < v_1$, we have $v' < v_0'$, and we may conclude that
\begin{equation} \label{E:sigma0}
\Sigma_0 \lesssim M_0 2^{n(1-\frac{v'}{v_0'})}A^{\frac{v'}{v_0'} - \frac{u'}{u_0'}} |E|^{\frac1{s_0}} \|g\|_{L^{u'}(L^{v'})}^{\frac{u'}{u_0'}}.
\end{equation}

Arguing similarly, using the fact that $v_1' < v'$, we have
\begin{align}\label{E:sigma1}
\begin{aligned}
\Sigma_1 &\leq M_1 |E|^{\frac1{s_1}} \sum_{j=n+1}^\infty 2^j \|\chi_{F_j}\|_{L^{u_1'}(L^{v_1'})} \\& \lesssim M_1 2^{n(1-\frac{v'}{v_1'})}A^{\frac{v'}{v_1'} - \frac{u'}{u_1'}} |E|^{\frac1{s_1}} \|g\|_{L^{u'}(L^{v'})}^{\frac{u'}{u_1'}}.
\end{aligned}
\end{align}

The final step is to find $n$ satisfying
\begin{equation} \label{E:N}
\begin{gathered}
 M_0^{1-\theta}M_1^\theta |E|^{\frac1s} \|g\|_{L^{u'}(L^{v'})} \sim M_0 2^{n(1-\frac{v'}{v_0'})}A^{\frac{v'}{v_0'} - \frac{u'}{u_0'}} |E|^{\frac1{s_0}} \|g\|_{L^{u'}(L^{v'})}^{\frac{u'}{u_0'}} \\
\sim M_1 2^{n(1-\frac{v'}{v_1'})}A^{\frac{v'}{v_1'} - \frac{u'}{u_1'}}|E|^{\frac1{s_1}} \|g\|_{L^{u'}(L^{v'})}^{\frac{u'}{u_1'}} .
\end{gathered}
\end{equation}
This is possible because of the identity
\begin{align*}
&\bigl( \bigl(\tfrac{M_1}{M_0}\bigr)^{\theta} A^{\frac{u'}{u_0'} - \frac{v'}{v_0'}}|E|^{\frac1s - \frac1{s_0}}\|g\|_{L^{u'}(L^{v'})}^{1-\frac{u'}{u_0'}}\bigr)^{\frac1{1-\frac{v'}{v_0'}}}\\
&\qquad=\bigl( \bigl(\tfrac{M_0}{M_1}\bigr)^{1-\theta} A^{\frac{u'}{u_1'} - \frac{v'}{v_1'}}|E|^{\frac1s - \frac1{s_1}}\|g\|_{L^{u'}(L^{v'})}^{1-\frac{u'}{u_1'}}\bigr)^{\frac1{1-\frac{v'}{v_1'}}}.
 \end{align*}
We leave the computations to the reader.
\end{proof}

Now by paying more careful attention to the losses in the above lemma, we can partially characterize the quasi-extremizers of \eqref{E:interpolated}.  

\begin{lemma} \label{L:quasiex interp}
Let $T$ be an operator satisfying the hypotheses of Lemma~\ref{L:interpolation} and let $(s,u,v)$ be as in \eqref{E:suv}. Let $E$ be a measurable set, $g = \sum_j 2^j \chi_{F_j}$ with the $F_j$ pairwise disjoint sets satisfying \eqref{E:Lv' const}, and assume that $\langle T\chi_E,g \rangle \geq \eps M_0^{1-\theta}M_1^\theta  |E|^{\frac1s}\|g\|_{L^{u'}(L^{v'})}$ for some $0 < \eps \leq \frac12$. 

Then there exists a subset $\scriptJ \subset \Z$ with cardinality $\#\scriptJ \lesssim \log(1+\eps^{-1})$ such that 
$$
\langle T\chi_E , \sum_{j \in \scriptJ} 2^j \chi_{F_j} \rangle \gtrsim \langle T\chi_E,g \rangle,
$$
and such that $j \in \scriptJ$ and $i \in \{0,1\}$ imply that
\begin{equation}\label{E:in J qex}
\langle T \chi_E, \chi_{F_j} \rangle \gtrsim \eps^C M_i |E|^{\frac1{s_i}} \|\chi_{F_j} \|_{L^{u_i'}(L^{v_i'})} 
\end{equation}
and
\begin{equation}
\label{E:in J Fj big}
\begin{aligned}
\eps^C \bigl(\tfrac{M_1}{M_0}\bigr)^{a_i}A^b|E|^{c_i}\|g\|_{L^{u'}(L^{v'})}^{u'd_i} &\lesssim \|\chi_{F_j}\|_{L^{u_i'}(L^{v_i'})} \\
&  \lesssim \eps^{-C}  \bigl(\tfrac{M_1}{M_0}\bigr)^{a_i}A^b|E|^{c_i}\|g\|_{L^{u'}(L^{v'})}^{u'd_i},
\end{aligned}
\end{equation}
where
\begin{equation} \label{E:abcd}
\begin{aligned}
&a_i = \frac1{v_i'(\frac1{v_0'} - \frac1{v_1'})}, \qquad b = \frac{u'(\frac1{v_0'u_1'} - \frac1{u_0'v_1'})}{\frac1{v_1'} - \frac1{v_0'}}, \\
&\qquad c_i = -\frac{\frac1{s_1} - \frac1{s_0}}{v_i'(\frac1{v_1'} - \frac1{v_0'})}, \qquad d_i = \frac1{u_i'} - \frac{\frac1{u_1'} - \frac1{u_0'}}{v_i'(\frac1{v_1'} - \frac1{v_0'})}.
\end{aligned}
\end{equation}
The exponent $C$ and the implicit constants are allowed to depend on $\theta$ and the exponents $s_0,s_1,u_0,u_1,v_0,v_1$.
\end{lemma}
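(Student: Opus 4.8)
The plan is to track the proof of Lemma~\ref{L:interpolation} more carefully, extracting from its various inequalities the information that must hold up to a factor of $\eps^{\pm C}$ whenever the bound \eqref{E:interpolated} is nearly saturated. First I would set up notation as in the proof of Lemma~\ref{L:interpolation}: write $g = \sum_j 2^j \chi_{F_j}$ with the $F_j$ pairwise disjoint and satisfying \eqref{E:Lv' const}, so that $\|g\|_{L^{u',1}(L^{v'})} \sim \|g\|_{L^{u'}(L^{v'})} \sim A|F^\flat|^{1/u'}$, and for each integer $n$ split the pairing into $\Sigma_0 = \langle T\chi_E, \sum_{j \le n} 2^j \chi_{F_j}\rangle$ and $\Sigma_1 = \langle T\chi_E, \sum_{j > n} 2^j \chi_{F_j}\rangle$. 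Choosing $n$ so that \eqref{E:N} holds, the hypothesis $\langle T\chi_E, g\rangle \ge \eps M_0^{1-\theta}M_1^\theta|E|^{1/s}\|g\|_{L^{u'}(L^{v'})}$ forces at least one of $\Sigma_0, \Sigma_1$ to be $\gtrsim \eps$ times the balanced quantity in \eqref{E:N}.

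Next I would identify $\scriptJ$. For the piece $\Sigma_0$, the estimate \eqref{E:sigma0} was obtained by summing the geometric series \eqref{E:sigma0 Fj} in $2^{j(1-v'/v_0')}$ with $1-v'/v_0' > 0$; since a geometric series is dominated by finitely many of its largest terms, if $\Sigma_0 \gtrsim \eps(\text{balanced})$ then there is a subset of indices $j \le n$ of cardinality $\lesssim \log(1+\eps^{-1})$ whose contributions already account for $\gtrsim \Sigma_0$, and for each such $j$ the term $2^j\|\chi_{F_j}\|_{L^{u_0'}(L^{v_0'})}$ must be $\gtrsim \eps^C$ times its maximal possible value, i.e.\ $2^{j(1-v'/v_0')}$ is within $\eps^{\pm C}$ of $2^{n(1-v'/v_0')}$. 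Symmetrically for $\Sigma_1$ using $v_1' < v'$ so $1 - v'/v_1' < 0$. Taking $\scriptJ$ to be this finite index set gives $\langle T\chi_E, \sum_{j\in\scriptJ}2^j\chi_{F_j}\rangle \gtrsim \langle T\chi_E, g\rangle$. That $2^j$ is pinned down to within $\eps^{\pm C}$ for $j \in \scriptJ$, combined with $\|\chi_{F_j}\|_{L^{u_0'}(L^{v_0'})} \sim 2^{j(1-v'/v_0')}A^{v'/v_0'}|F^\flat|^{1/u_0'}$ from \eqref{E:sigma0 Fj} (which is an equality up to constants because of \eqref{E:Lv' const}) and the analogous relation with subscript $1$, yields the two-sided bound \eqref{E:in J Fj big}: one solves the pinned value of $2^n$ from \eqref{E:N} in terms of $M_0, M_1, A, |E|, \|g\|$, substitutes, and reads off that the exponents are exactly $a_i, b, c_i, d_i$ in \eqref{E:abcd}. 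Finally, \eqref{E:in J qex} follows because for $j \in \scriptJ$ we have $\langle T\chi_E, \chi_{F_j}\rangle \gtrsim 2^{-j}\langle T\chi_E, 2^j\chi_{F_j}\rangle \gtrsim 2^{-j}\eps^C\langle T\chi_E,g\rangle \gtrsim \eps^C M_i|E|^{1/s_i}\|\chi_{F_j}\|_{L^{u_i'}(L^{v_i'})}$, where the last step uses that $2^j$ and $\|\chi_{F_j}\|$ are now comparable (up to $\eps^{\pm C}$) to the balanced quantities, so that $2^{-j}\eps^C\langle T\chi_E,g\rangle$ is bounded below by the stated restricted-weak-type-sized expression.

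The main obstacle I expect is purely bookkeeping: verifying that the exponents one obtains by solving \eqref{E:N} for $2^n$ and back-substituting into $\|\chi_{F_j}\|_{L^{u_i'}(L^{v_i'})} \sim 2^{j(1-v'/v_i')}A^{v'/v_i'}|F^\flat|^{1/u_i'}$ really are the $a_i, b, c_i, d_i$ of \eqref{E:abcd}, and that the same $j$-independent formulas work for both $i=0$ and $i=1$ (this is forced by the compatibility identity displayed at the end of the proof of Lemma~\ref{L:interpolation}, which is exactly the statement that the two expressions for $2^n$ agree). One must also be slightly careful that the index $n$, and hence all the pinned quantities, genuinely exists as a real number and that rounding it to an integer costs only a constant; this is where the hypotheses $1 < s_j, u_j, v_j < \infty$ and $v_0 < v_1$ enter, guaranteeing $1 - v'/v_0' > 0 > 1 - v'/v_1'$ so that both geometric series converge and the balancing equation \eqref{E:N} is solvable. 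None of this is deep, but the algebra is lengthy, so I would state the reductions cleanly and then say the remaining computations, being routine, are left to the reader.
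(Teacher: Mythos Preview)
Your overall strategy matches the paper's: take $n$ as in \eqref{E:N}, use the geometric decay on both sides to localize to $|j-n|\lesssim\log(1+\eps^{-1})$, and extract the quasi-extremal information from this localization. However, there is a genuine gap in one step.

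You claim that \eqref{E:sigma0 Fj} is ``an equality up to constants because of \eqref{E:Lv' const},'' and you use the resulting two-sided bound $\|\chi_{F_j}\|_{L^{u_i'}(L^{v_i'})}\sim 2^{-jv'/v_i'}A^{v'/v_i'}|F^\flat|^{1/u_i'}$ to derive both \eqref{E:in J Fj big} and the last step of \eqref{E:in J qex}. This is false. The estimate \eqref{E:sigma0 Fj} comes from $2^{jv'}|\Pi^{-1}(t)\cap F_j|\le\sum_k 2^{kv'}|\Pi^{-1}(t)\cap F_k|=\|g(t,\cdot)\|_{L^{v'}}^{v'}\sim A^{v'}$ together with the inclusion $\Pi(F_j)\subset F^\flat$; neither of these need be reversible for a single $j$, since other $F_k$'s may carry most of the $L^{v'}$ mass on a given fiber and $\Pi(F_j)$ may be much smaller than $F^\flat$. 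So \eqref{E:sigma0 Fj} is only an upper bound on $\|\chi_{F_j}\|_{L^{u_i'}(L^{v_i'})}$, and your route to the two-sided \eqref{E:in J Fj big} breaks down.

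The fix, which is what the paper does, is to reverse the order: prove \eqref{E:in J qex} first, using only the valid \emph{upper} bound \eqref{E:sigma0 Fj} (since $|j-n|\lesssim\log(\eps^{-1})$ lets you write $2^{-j}\gtrsim\eps^C 2^{-jv'/v_i'}2^{-n(1-v'/v_i')}$, after which \eqref{E:N} and \eqref{E:sigma0 Fj} give the lower bound $\eps^C M_i|E|^{1/s_i}\|\chi_{F_j}\|_{L^{u_i'}(L^{v_i'})}$). Then derive \eqref{E:in J Fj big} from \eqref{E:in J qex} together with the hypothesis restricted weak-type bounds: the chain $\eps\,M_0^{1-\theta}M_1^\theta|E|^{1/s}\|g\|\le\langle T\chi_E,g\rangle\lesssim 2^j\eps^{-1}\langle T\chi_E,\chi_{F_j}\rangle\lesssim 2^j\eps^{-1}M_i|E|^{1/s_i}\|\chi_{F_j}\|_{L^{u_i'}(L^{v_i'})}$ gives the lower half, and $\eps^C M_i|E|^{1/s_i}\|\chi_{F_j}\|_{L^{u_i'}(L^{v_i'})}\lesssim\langle T\chi_E,\chi_{F_j}\rangle\le 2^{-j}\langle T\chi_E,g\rangle\lesssim 2^{-j}M_0^{1-\theta}M_1^\theta|E|^{1/s}\|g\|$ gives the upper half; substituting $2^j\sim\eps^{\pm C}2^n$ and solving via \eqref{E:N} yields \eqref{E:abcd}.
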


\begin{proof}  Let $n$ be as in \eqref{E:N}.  Then arguing as in \eqref{E:sigma0 Fj}, \eqref{E:sigma0}, and \eqref{E:sigma1}, we have 
\begin{equation} \label{E:j big}
 \sum_{|j-n| >C\log(1+\eps^{-1})}\langle T\chi_E, 2^j \chi_{F_j} \rangle \ll \eps M_0^{1-\theta}M_1^\theta |E|^{\frac1s}\|g\|_{L^{u'}(L^{v'})} \leq \langle T\chi_E,g \rangle.
\end{equation}
Thus if we let 
$$
\scriptJ = \{j \in \Z : |j-n| \leq C \log(1+\eps^{-1}) \text{\: and \:} \langle T\chi_E,2^j \chi_{F_j} \rangle \geq c \eps \langle T\chi_E,g \rangle \},
$$
we have
$$
\langle T\chi_E,\sum_{j \in \scriptJ} 2^j \chi_{F_j} \rangle \gtrsim \langle T\chi_E,g \rangle.
$$

We claim that $\scriptJ$ has all of the properties stated in the conclusion of the lemma.  The cardinality bound is obvious.  For \eqref{E:in J qex} with $i=0$, we use the definition of $n$ and use \eqref{E:N} and then \eqref{E:sigma0 Fj} to see that if $j \in \scriptJ$,
\begin{align*}
\langle T\chi_E,\chi_{F_j} \rangle &\gtrsim 2^{-j} \eps^2M_0^{1-\theta} M_1^\theta  |E|^{\frac1s}\|g\|_{L^{u'}(L^{v'})}\\
& \gtrsim \eps^C 2^{-j\frac{v'}{v_0'}}2^{-n(1-\frac{v'}{v_0'})}M_0^{1-\theta}M_1^\theta |E|^{\frac1s}\|g\|_{L^{u'}(L^{v'})} \\
&\sim \eps^C 2^{-j\frac{v'}{v_0'}}M_0 A^{\frac{v'}{v_0'} - \frac{u'}{u_0'}} |E|^{\frac1{s_0}}\|g\|_{L^{u'}(L^{v'})}^{\frac{u'}{u_0'}} \gtrsim \eps^C M_0 |E|^{\frac1{s_0}} \|\chi_{F_j}\|_{L^{u_0'}(L^{v_0'})}.
\end{align*}
This establishes \eqref{E:in J qex} when $i=0$, and the case $i=1$ may be verified using similar arguments.

We use \eqref{E:in J qex} to prove \eqref{E:in J Fj big}.  We note that $j \in \scriptJ$ implies
\begin{align*}
&\eps M_0^{1-\theta}M_1^\theta |E|^{\frac1s}\|g\|_{L^{u'}(L^{v'})} \leq \langle T\chi_E,g \rangle \\
&\qquad \lesssim 2^j\eps^{-1} \langle T\chi_E,\chi_{F_j} \rangle \lesssim 2^j \eps^{-1}M_i |E|^{\frac1{s_i}} \|\chi_{F_j}\|_{L^{u_i'}(L^{v_i'})}\\
&\eps^C M_i |E|^{\frac1{s_i}}\|\chi_{F_j}\|_{L^{u_i'}(L^{v_i'})} \lesssim \langle T\chi_E,\chi_{F_j} \rangle\\
&\qquad  \leq 2^{-j}\langle T\chi_E,g \rangle \lesssim 2^{-j} M_0^{1-\theta} M_1^\theta |E|^{\frac1s}\|g\|_{L^{u'}(L^{v'})}.
\end{align*}
Taking $i=0$ and rearranging, we have for $j \in \scriptJ$ that 
$$
\begin{aligned}
&\eps^C 2^{-n} \bigl(\tfrac{M_1}{M_0}\bigr)^\theta |E|^{\frac1{s} - \frac1{s_0}}\|g\|_{L^{u'}(L^{v'})} \lesssim \|\chi_{F_j}\|_{L^{u_0'}(L^{v_0'})}\\
&\qquad  \lesssim \eps^{-C} 2^{-n} \bigl(\tfrac{M_1}{M_0}\bigr)^\theta |E|^{\frac1s - \frac1{s_0}}\|g\|_{L^{u'}(L^{v'})}.
\end{aligned}
$$
By \eqref{E:N},
$$
2^{-n} \bigl(\tfrac{M_1}{M_0}\bigr)^\theta |E|^{\frac1{s} - \frac1{s_0}}\|g\|_{L^{u'}(L^{v'})} \sim \bigl(\tfrac{M_1}{M_0}\bigr)^{a_0}A^b|E|^{c_0}\|g\|_{L^{u'}(L^{v'})}^{u'd_0},
$$
so \eqref{E:in J Fj big} holds when $i=0$.  We leave the case $i=1$ to the reader.
\end{proof}


\section{The strong type bounds} \label{S:Strong type}


Our main task in this section will be to prove the following, which will complete the proof of Theorem~\ref{T:main}.

\begin{proposition} \label{P:restricted strong type}
Let $\theta_0 < \theta < 1$.  Then $X_\theta$ is a bounded operator from $L^{p_\theta,1}$ to $L^{q_\theta}(L^{r_\theta})$.
\end{proposition}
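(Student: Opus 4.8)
The plan is to deduce Proposition~\ref{P:restricted strong type} from the restricted weak type bounds of Proposition~\ref{P:rwt}, the scale‑uniform estimates of Proposition~\ref{P:interpolated Xray}, the quasi‑extremizer dichotomy of Lemma~\ref{L:quasiex interp}, and the trivial bound of Lemma~\ref{L:L1 to LinftyL1}, by adapting Christ's argument from \cite{ChQex} to the mixed‑norm setting. We keep the reductions of Lemma~\ref{L:reductions} in force. By the atomic description $\|f\|_{L^{p_\theta,1}}\sim\sum_n 2^n|\{|f|>2^n\}|^{1/p_\theta}$, the positivity of $X_\theta$, and the duality $L^{q_\theta}(L^{r_\theta})=(L^{q_\theta'}(L^{r_\theta'}))^*$, it suffices to prove
$$\langle X_\theta\chi_E,g\rangle\lesssim|E|^{\frac1{p_\theta}}\|g\|_{L^{q_\theta'}(L^{r_\theta'})}$$
for all measurable $E\subset\R^d$ of finite positive measure and all $g\ge0$. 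After normalizing $|E|=\|g\|_{L^{q_\theta'}(L^{r_\theta'})}=1$ I would write $g=\sum_j 2^j\chi_{F_j}$ with the $F_j$ pairwise disjoint and, splitting the $t$‑axis by the dyadic sizes of $t\mapsto\|g(t,\cdot)\|_{L^{r_\theta'}}$ and of $|\Pi^{-1}(t)\cap F_j|$ and resumming, reduce to the situation of \eqref{E:Lv' const}, in which each $|\Pi^{-1}(t)\cap F_j|$ is essentially constant in $t$; this makes $\|\chi_{F_j}\|_{L^{q'}(L^{r'})}$ log‑linear in $(1/q',1/r')$ along the interpolation segment and gives $\|\chi_{F_j}\|_{L^{q_\theta',1}(L^{r_\theta')}}\sim\|\chi_{F_j}\|_{L^{q_\theta'}(L^{r_\theta')}}$. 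Finally, decomposing $X_\theta=\sum_{k\ge0}X_\theta^{-k}$ as in \eqref{E:Xk} and setting $F_j^{(k)}=F_j\cap\Pi^{-1}((2^{-k-1},2^{-k}])$, one has $\langle X_\theta\chi_E,g\rangle=\sum_{j,k}2^j\langle X_\theta^{-k}\chi_E,\chi_{F_j^{(k)}}\rangle$.

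Next I would record two estimates for the pieces. Since $X_\theta^{-k}=|t|^{\frac{2K\theta}{(d+2)(d-1)}}X_0^{-k}$ on its support, Lemma~\ref{L:L1 to LinftyL1} applied to $X_0^{-k}$ gives $\int_{\R^{d-1}}X_\theta^{-k}\chi_E(t,y)\,dy\lesssim 2^{-\frac{2kK\theta}{(d+2)(d-1)}}|E|$ for $t\in(2^{-k-1},2^{-k}]$, hence the geometrically decaying bound
$$\langle X_\theta^{-k}\chi_E,\chi_{F_j^{(k)}}\rangle\lesssim 2^{-\frac{2kK\theta}{(d+2)(d-1)}}\min\{|\Pi(F_j)|,2^{-k}\}\,|E|;$$
while Proposition~\ref{P:interpolated Xray} and the duality $(L^{q_\theta,\infty}(L^{r_\theta}))^*=L^{q_\theta',1}(L^{r_\theta'})$ give the scale‑uniform bound $\langle X_\theta^{-k}\chi_E,\chi_{F_j^{(k)}}\rangle\lesssim|E|^{\frac1{p_\theta}}\|\chi_{F_j^{(k)}}\|_{L^{q_\theta'}(L^{r_\theta')}}$. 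For $k$ beyond a threshold $k_1(j)$ depending only on $|\Pi(F_j)|$ and $|E|$ the first bound is summable in $k$, and an elementary geometric summation shows that this tail contributes $\lesssim 2^j|E|^{1/p_\theta}\|\chi_{F_j}\|_{L^{q_\theta'}(L^{r_\theta')}}$, which sums against $j$ to an acceptable term. The point of the proof is to control the complementary ``critical'' range $0\le k<k_1(j)$, where the scale‑uniform bound cannot be summed freely because the $F_j^{(k)}$ may be spread over many scales.

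The heart of the argument — and the main obstacle — is to run Christ's iteration from \cite{ChQex} over this critical range. Set $\Lambda:=\langle X_\theta\chi_E,g\rangle$ (so we must show $\Lambda\lesssim 1$), fix $\eps$ a small negative power of $\Lambda$, and call a pair $(j,k)$ \emph{significant} if $\chi_{F_j^{(k)}}$ is an $\eps$‑quasi‑extremizer for $X_0^{-k}$, i.e.\ $\langle X_\theta^{-k}\chi_E,\chi_{F_j^{(k)}}\rangle\ge\eps\,|E|^{1/p_\theta}\|\chi_{F_j^{(k)}}\|_{L^{q_\theta'}(L^{r_\theta')}}$; the non‑significant part of the critical range is $\eps$‑small and will be absorbed. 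For a significant pair, apply Lemma~\ref{L:quasiex interp} to $X_0^{-k}$ with $(s_i,u_i,v_i)=(p_{\theta_i},q_{\theta_i},r_{\theta_i})$ and $M_0=2^{\frac{2kK\theta_0}{(d+2)(d-1)}}$, $M_1=2^{\frac{2kK}{(d+2)(d-1)}}$ (legitimate restricted weak type constants for $X_0^{-k}$ by Proposition~\ref{P:rwt} after dividing out the weight). Then \eqref{E:in J Fj big} at the two endpoints $i=0,1$, combined with the log‑linearity from the first paragraph, pins the dyadic sizes of $|\Pi^{-1}(t)\cap F_j|$ and of $|\Pi(F_j^{(k)})|$, up to a factor $\eps^{-O(1)}$, to quantities of the form $2^{c_\ast k}\cdot(\text{fixed})$ with the exponent of $2^k$ nonzero — it is built from the $a_i,b,c_i,d_i$ of \eqref{E:abcd} and the nonvanishing difference $\theta_0\ne1$. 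Organizing the $F_j$ into the (boundedly ambiguous) size classes determined by these two dyadic parameters, for each size class only $O(\log(1+\eps^{-1}))$ scales $k$ can be significant, and, since each $k$ is in turn tied to a single size class up to bounded ambiguity and the $F_j^{(k)}$ reassemble over $k$ into $F_j$ with bounded overlap, the total significant contribution is $\lesssim\log(1+\eps^{-1})\,|E|^{1/p_\theta}\|g\|_{L^{q_\theta'}(L^{r_\theta')}}$ via the scale‑uniform estimate. Collecting the tail bound, the significant bound, and the $\eps$‑small non‑significant bound yields a self‑improving inequality of the form $\Lambda\lesssim\log(2+\Lambda)$, whence $\Lambda\lesssim 1$.

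I expect the real work to lie in two places. First, in the reduction step: carrying out the grouping so that every piece $F_j$ genuinely enjoys the constant‑base‑mass property \eqref{E:Lv' const} without incurring a loss that is unsummable against the $L^{q_\theta'}(L^{r_\theta'})$‑norm of $g$. Second, and more seriously, in the combinatorics of the iteration: converting the pinning \eqref{E:in J Fj big} into a genuinely bounded‑overlap count for the significant pairs that is uniform in $j$ and $k$, handling the interface $k\sim k_1(j)$ between the geometric tail and the critical range, and keeping track of the mixed‑norm bookkeeping so that everything recombines cleanly into $\|g\|_{L^{q_\theta'}(L^{r_\theta')}}$ and closes the self‑improving estimate for $\Lambda$. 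This is the step where the adaptation of \cite{ChQex} to mixed norms — and the use of the quasi‑extremizer characterization of Section~\ref{S:Interpolation} — does all the work.
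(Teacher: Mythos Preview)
Your outline has the right ingredients on the table but misses the step that actually does the work in the paper, and as written it contains a real gap.

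The crux of Christ's argument in \cite{ChQex}, as carried out here, is \emph{not} a counting bound on significant pieces via the pinning \eqref{E:in J Fj big}; it is the almost-disjointness statement Lemma~\ref{L:disjoint}: the refined sets $E_{kl}\subset E$ associated to the quasi-extremal pieces satisfy $\sum_{(k,l)\in\scriptK_{\eps\eta}}|E_{kl}|\lesssim(\log(1+\eps^{-1}))^3|E|$. This is what allows one to sum the scale-uniform estimate over $(k,l)$ and recover a single factor $|E|^{1/p_\theta}$ (via H\"older, see \eqref{E:harder bound}), which is then played against the trivial cardinality bound \eqref{E:trivial bound}. The proof of Lemma~\ref{L:disjoint} requires a \emph{fresh} application of the refinements machinery---Lemma~\ref{L:improved Laghi}, which compares two different sets $F,F'$ through a common piece $H\subset E$ and uses the Jacobian bounds of Proposition~\ref{P:jacobian bounds}. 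Lemma~\ref{L:quasiex interp} alone, which constrains only $F$-side sizes, cannot supply this: knowing that each $F_j^{(k)}$ has a prescribed size does not prevent many of them from interacting with the \emph{same} part of $E$, and in that case summing the scale-uniform bound produces $(\#\text{pieces})\cdot|E|^{1/p_\theta}$ rather than $|E|^{1/p_\theta}$.

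Two concrete failures of your scheme make this visible. First, your claim that \eqref{E:in J Fj big} pins the sizes to $2^{c_*k}\cdot(\text{fixed})$ with $c_*\neq0$ is false when $K=0$ (e.g.\ the moment curve after the reductions of Lemma~\ref{L:reductions}): then $M_0=M_1=1$ for every $k$, so $M_1/M_0$ carries no $k$-dependence and the pinning is identical at all scales. Your ``only $O(\log(1+\eps^{-1}))$ scales $k$ can be significant'' therefore cannot follow from \eqref{E:in J Fj big}. Second, even when $K\neq0$, after your counting you still need to sum $\sum_j 2^j\|\chi_{F_j}\|_{L^{q_\theta'}(L^{r_\theta'})}$, which is an $L^{q_\theta',1}$-type quantity in the outer variable and is not controlled by $\|g\|_{L^{q_\theta'}(L^{r_\theta'})}$; the paper avoids this by first grouping into pieces $g_{kl}$ with constant $\|g(t,\cdot)\|_{L^{r_\theta'}}$ and constant $t$-scale, applying Lemma~\ref{L:quasiex interp} to \emph{each such piece} (so that the $j$-sum is already $O(\log)$), and then summing over $(k,l)$ via Lemma~\ref{L:disjoint}. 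Your tail/geometric-decay step and your use of Proposition~\ref{P:interpolated Xray} are fine, but to close you must reinstate the $E$-side almost-disjointness argument (Lemmas~\ref{L:disjoint} and~\ref{L:improved Laghi}); there is no self-improving shortcut around it.
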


With $X_\theta$ as in \eqref{E:new X}, we recall from the introduction that this is equivalent to the statement that $X_0$ is a bounded operator from $L^{p_\theta,1}$ to $L^{q_\theta}(L^{r_\theta}; d\gamma^*\lambda)$.  We can thus use real interpolation (cf.\ \cite[1.18.4--6]{Triebel}) and the trivial $L^1 \to L^\infty(L^1)$ bound (Lemma~\ref{L:L1 to LinftyL1}) to obtain the main theorem.

\begin{proof}[Proof of Proposition~\ref{P:restricted strong type}]  
It suffices to prove that $\langle X_\theta \chi_E,g \rangle \lesssim |E|^{\frac1{p_\theta}}$ whenever $E \subset \R^d$ has finite, positive measure and $g = \sum_j 2^j \chi_{F_j}$ with the $F_j$ pairwise disjoint sets and $\|g\|_{L^{q_\theta'}(L^{r_\theta'})} \sim 1$.  

We start by decomposing $g$.  For $(k,l) \in \Z^2$, we define 
\begin{align*}
A_{kl} &:= \{t \in \R : 2^{l-1} < t \leq 2^l, \qtq{and} 2^{k-1} < \|g(t,\cdot)\|_{L^{r_\theta'}} \leq 2^k\}\\
g_{kl} &:= (\chi_{A_{kl}}\circ \Pi) g, \qquad F_{jkl} := \Pi^{-1}(A_{kl}) \cap F_j, \quad j \in \Z.
\end{align*}
We now adapt a strategy, originally used by Christ in \cite{ChQex} and subsequently used in several other articles (cf.\ \cite{DLW, Laghi, StovallJFA}) to prove strong-type bounds for various generalized Radon transforms.  Roughly, we will prove that the $g_{kl}$ interact with almost disjoint pieces of $E$.

Our next step is to decompose the right hand side of the identity
$$
\langle X_\theta \chi_E,g \rangle = \sum_{k,l} \langle X_\theta \chi_E,g_{kl}\rangle = \sum_{k,l} \langle X_\theta^l \chi_E,g_{kl}\rangle,
$$
where we have used the notation in \eqref{E:Xk}.  For $\eps > 0$, we say that $(k,l) \in \scriptK_\eps$ if 
$$
\tfrac\eps2|E|^{\frac1{p_\theta}} \|g_{kl}\|_{L^{q_\theta'}(L^{r_\theta'})} < \langle X_\theta \chi_E,g_{kl}\rangle \leq \eps|E|^{\frac1{p_\theta}} \|g_{kl}\|_{L^{q_\theta'}(L^{r_\theta'})}.
$$
We observe that for each $k,l$, $\|g_{kl}\|_{L^{q_\theta',1}(L^{r_\theta'})} \sim \|g_{kl}\|_{L^{q_\theta'}(L^{r_\theta'})}$, so by Proposition~\ref{P:interpolated Xray}, we have that
$$
\langle X_\theta \chi_E,g\rangle = \sum_{\eps \lesssim 1} \sum_{(k,l) \in \scriptK_\eps} \langle X_\theta \chi_E,g_{kl}\rangle,
$$
where the outer sum is taken only over integer powers of 2.  We decompose further.  For $\eta > 0$, we say that $(k,l) \in \scriptK_{\eps\eta}$ if 
$$
\frac\eta2 < \|g_{kl}\|_{L^{q_\theta'}(L^{r_\theta'})} \leq \eta.
$$

We now record a trivial bound.  Since the $A_{kl}$ are pairwise disjoint, we have
$$
\#\scriptK_{\eps\eta} \sim \eta^{-q_\theta'}\sum_{(k,l) \in \scriptK_{\eps\eta}} \|g_{kl}\|_{L^{q_\theta'}(L^{r_\theta'})}^{q_\theta'} \leq \eta^{-q_\theta'} \|g\|_{L^{q_\theta'}(L^{r_\theta'})}^{q_\theta'} \sim \eta^{-q_\theta'}.
$$
Therefore
\begin{equation} \label{E:trivial bound}
\sum_{(k,l) \in \scriptK_{\eps\eta}}\langle X_\theta \chi_E,g_{kl}\rangle \sim \sum_{(k,l) \in \scriptK_{\eps\eta}} \eps |E|^{\frac1{p_\theta}}\eta \lesssim \eps \eta^{1-q_\theta'}|E|^{\frac1{p_\theta}}.
\end{equation}
Because of the negative power of $\eta$, we need a second bound in addition to \eqref{E:trivial bound}.

We apply Lemmma~\ref{L:quasiex interp} to $\langle X_0 \chi_E,g_{kl}\rangle$ with $A = 2^k$,
$$
(s_0,u_0,v_0) = (p_{\theta_0},q_{\theta_0},r_{\theta_0}), \qquad (s_1,u_1,v_1) = (p_1,q_1,r_1),
$$
and $M_0 = 2^{-\frac{2K\theta_0l}{(d+2)(d-1)}}$, $M_1 = 2^{-\frac{2Kl}{(d+2)(d-1)}}$.  Noting that the quantities in \eqref{E:abcd} are
$$
a_0 = -\frac{(d+2)(d-1)}2, \qquad b = -q'_\theta, \qquad c_0 = \frac{2(d-1)}{d(d+1)}, \qquad d_0 = 1,
$$
there exists a set $\scriptJ_{kl} \subset \Z$, $\#\scriptJ_{kl} \lesssim \log(1+\eps^{-1})$, such that for $j \in \scriptJ_{kl}$, we have (recalling that $q_{\theta_0} = r_{\theta_0}$)
\begin{gather}
\label{E:Fjkl qex}
\langle X_{\theta_0} \chi_E,\chi_{F_{jkl}}\rangle \gtrsim \eps^C |E|^{\frac1{p_{\theta_0}}} |F_{jkl}|^{\frac1{q_{\theta_0}'}}\\
\label{E:Fjkl same size}
\begin{aligned}
&\eps^C2^{l \frac{2K}{d(d+1)}}(2^{-k}\|g_{kl}\|_{L^{q_\theta'}(L^{r_\theta'})})^{q_\theta'}|E|^{\frac{2(d-1)}{d(d+1)}} \lesssim |F_{jkl}|^{\frac1{q_{\theta_0}'}} \\
&\qquad \lesssim \eps^{-C} 2^{l \frac{2K}{d(d+1)}}(2^{-k}\|g_{kl}\|_{L^{q_\theta'}(L^{r_\theta'})})^{q_\theta'}|E|^{\frac{2(d-1)}{d(d+1)}}.
\end{aligned}
\end{gather}

Given $(k,l) \in \scriptK_{\eps\eta}$, we define
\begin{align*}
E_{jkl} &:= \bigl\{(s,x) \in E : X_\theta^*\chi_{F_{jkl}}(s,x) \geq \frac{\langle X_\theta\chi_E,\chi_{F_{jkl}}\rangle}{2|E|}\bigr\}, \quad j \in \scriptJ_{kl}, \\
E_{kl} &:= \bigcup_{j \in \scriptJ_{kl}} E_{jkl}.
\end{align*}
As usual, we have
\begin{equation} \label{E:Ejkl sim E}
\langle X_\theta\chi_E,\chi_{F_{jkl}}\rangle \sim \langle X_\theta\chi_{E_{jkl}},\chi_{F_{jkl}}\rangle.
\end{equation}
In a moment we will prove the following.

\begin{lemma} \label{L:disjoint} We have for each $\eta,\eps$ that
$$
\sum_{(k,l) \in \scriptK_{\eps \eta}}|E_{kl}| \lesssim (\log(1+\eps^{-1}))^3|E|.
$$
\end{lemma}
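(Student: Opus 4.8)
The strategy, adapted from Christ's argument in \cite{ChQex}, is to show that for fixed $\eta$ and $\eps$, the sets $E_{kl}$ with $(k,l) \in \scriptK_{\eps\eta}$ have controlled overlap: a point $(s,x)$ can belong to only $O((\log(1+\eps^{-1}))^3)$ of them. The key mechanism is that membership of $(s,x)$ in $E_{jkl}$ forces $X_\theta^*\chi_{F_{jkl}}(s,x)$ to be comparably large, which in turn pins down $l$ (through the size of the weight $|t|^{2K\theta/((d+2)(d-1))}$ on the relevant $t$-interval $(2^{l-1},2^l]$) and the total mass $|F_{jkl}|$ (through \eqref{E:Fjkl same size}). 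First I would fix $(s,x)$ and estimate, for each pair $(k,l) \in \scriptK_{\eps\eta}$ with $(s,x) \in E_{kl}$, the quantity $X_\theta^*\chi_{F_{jkl}}(s,x) = \mu_\theta(\{t \in A_{kl} : (t,x-sP(t)) \in F_j\})$ for the relevant $j \in \scriptJ_{kl}$. Combining \eqref{E:Ejkl sim E}, \eqref{E:Fjkl qex}, and the definition of $E_{jkl}$, this is $\gtrsim \eps^C |F_{jkl}|/|E| \cdot (\text{powers})$, while on the other hand it is bounded above by $\mu_\theta(A_{kl}) \sim 2^{l(1 + 2K\theta/((d+2)(d-1)))}$ since $A_{kl} \subset (2^{l-1},2^l]$.

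The heart of the argument is then a \emph{double counting / almost-orthogonality} estimate. For each admissible $(k,l)$ and the chosen $j \in \scriptJ_{kl}$, the slice $\{t : (t,x-sP(t)) \in F_j\}$ is a subset of $\Pi(F_j)$, and these have bounded overlap in $j$ only in the ambient $g$, not a priori here — so instead I would sum $X_\theta^*\chi_{F_{jkl}}(s,x)$ over all $(k,l) \in \scriptK_{\eps\eta}$ containing $(s,x)$ in $E_{kl}$, noting the $A_{kl}$ (hence the $F_{jkl}$ over varying $(k,l)$) are pairwise disjoint in $t$, so the sum telescopes into $\mu_\theta(\{t \in I : (t,x-sP(t)) \in \bigcup F_j\}) = X_\theta^* g^{\flat}(s,x)$ for a suitable indicator $g^\flat \le \chi_F$ with $\|g^\flat\|_\infty \lesssim 1$ after the Lorentz normalization; this is bounded since $X_\theta^*$ maps into $L^\infty$-type spaces on the fibers over $I \subset [0,1]$ (more precisely, $X_\theta^*\chi_F(s,x) \le \mu_\theta(I) \lesssim 1$). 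Meanwhile, each term in the sum is $\gtrsim \eps^C$ times a normalized mass, and using \eqref{E:Fjkl same size} to express $|F_{jkl}|$ uniformly in terms of $k$, $l$, $\eta$, $|E|$, I would conclude that the number of distinct $(k,l)$ contributing is $\lesssim \eps^{-C}$ — and after removing the $\log(1+\eps^{-1})$ factors from $\#\scriptJ_{kl}$ and from the geometric summation over the ranges of $k$ and $l$ forced by the two-sided bound above, one gets $(\log(1+\eps^{-1}))^3$. Integrating $\sum_{(k,l)} \chi_{E_{kl}}(s,x) \lesssim (\log(1+\eps^{-1}))^3$ over $(s,x)$ and using $E_{kl} \subset E$ gives the claim; the extra powers of $\eps^C$ must be absorbed, which works because the quantity being counted is a \emph{number} of lattice points and hence the $\eps^C$ losses only widen a range that is already logarithmic in the relevant parameters.

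The main obstacle I expect is making the disjointness bookkeeping fully rigorous: the sets $F_{jkl}$ are disjoint over $(k,l)$ for fixed $j$ (because the $A_{kl}$ partition $\R$) but \emph{not} over $j$, so one cannot naively sum $X_\theta^*\chi_{F_{jkl}}(s,x)$ over both $j$ and $(k,l)$ against the trivial bound $X_\theta^*\chi_F(s,x) \lesssim 1$ without first invoking the cardinality bound $\#\scriptJ_{kl} \lesssim \log(1+\eps^{-1})$ to collapse the $j$-sum, and then being careful that the $2^k$-level and $2^l$-level decompositions are each genuinely orthogonal. A secondary subtlety is that the lower bound on each $X_\theta^*\chi_{F_{jkl}}(s,x)$ coming from \eqref{E:Ejkl sim E} and \eqref{E:Fjkl qex} must be converted, via \eqref{E:Fjkl same size} and the relation $\mu_\theta(A_{kl}) \sim 2^{l\delta_{K\theta}^{-1}}$, into a statement of the form ``only $\lesssim \eps^{-C}$ pairs $(k,l)$ can contribute,'' which requires tracking how $\|g_{kl}\|_{L^{q_\theta'}(L^{r_\theta'})} \sim \eta$ (fixed on $\scriptK_{\eps\eta}$), the factor $2^k$, and the factor $2^{l \cdot 2K/(d(d+1))}$ conspire; this is the routine-but-delicate arithmetic that the rest of Section~\ref{S:Strong type} presumably carries out, and I would organize it by first fixing $l$ (showing boundedly many $k$ work for each $l$) and then fixing the total mass constraint to bound the number of viable $l$.
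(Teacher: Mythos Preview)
Your approach has a genuine gap that I do not see how to close without bringing in the geometric input you omit.

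The double-counting step you outline yields
\[
\sum_{\substack{(k,l)\in\scriptK_{\eps\eta}\\ (s,x)\in E_{kl}}} X_\theta^*\chi_{F_{jkl}}(s,x) \lesssim 1,
\]
and your lower bound on each summand, after feeding in \eqref{E:Fjkl same size} and the definition of $E_{jkl}$, is of the form $\eps^C B_{kl}$ with $B_{kl}$ carrying genuine dependence on $(k,l)$ through factors like $(2^{-k}\eta)^{q_\theta'}$ and powers of $2^l$. The inequality you obtain is therefore $\sum B_{kl}\lesssim \eps^{-C}$, not a bound on the \emph{number} of pairs. Your claim that ``the $\eps^C$ losses only widen a range that is already logarithmic'' presupposes exactly what you need to prove: nothing in your argument forces the set of contributing $(k,l)$ into a box of logarithmic side. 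Concretely, for fixed $l$ the inequality $\sum_k (2^{-k}\eta)^{q_\theta'}\lesssim C_l$ is a convergent geometric series and places no bound on the number of $k$; and the cardinality of $\scriptK_{\eps\eta}$ itself can be as large as $\eta^{-q_\theta'}$ (this is the trivial bound \eqref{E:trivial bound}), so no purely measure-theoretic/disjointness argument of this type can succeed.

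The paper does \emph{not} attempt pointwise overlap control. Instead it first spends two $\log(1+\eps^{-1})$ factors to thin $\scriptK_{\eps\eta}$ to a $C'\log(1+\eps^{-1})$-separated set and one more to reduce to $\#\scriptJ_{kl}=1$, and then argues by contradiction via Cauchy--Schwarz: if $\sum|E_{jkl}|\gg |E|$ there must exist distinct $(k,l),(k',l')$ with $|E_{jkl}\cap E_{j'k'l'}|\gtrsim\eps^C|E|$. The contradiction comes from a separate geometric lemma (Lemma~\ref{L:improved Laghi}), proved by the method of refinements using the Jacobian bounds of Proposition~\ref{P:jacobian bounds}, which shows that a large common refinement $H=E_{jkl}\cap E_{j'k'l'}$ forces $|F_{jkl}|\gtrsim\eps^{C}2^{a|l-l'|}|F_{j'k'l'}|$ and symmetrically; hence $|l-l'|\lesssim\log(1+\eps^{-1})$, and then \eqref{E:Fjkl same size} forces $|k-k'|\lesssim\log(1+\eps^{-1})$, contradicting the separation. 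This geometric input---ultimately the curvature of $P$ through the torsion $L_P$---is the missing idea in your plan, and I do not see how to replace it by the bookkeeping you describe.
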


Assuming the lemma for now and using Lemma \ref{L:interpolation}, we compute
\begin{align} 
\notag
\sum_{(k,l) \in \scriptK_{\eps\eta}} &\langle X_\theta \chi_E,g_{kl}\rangle \sim \sum_{(k,l) \in \scriptK_{\eps\eta}}\sum_{j \in \scriptJ_{kl}}\langle X_\theta \chi_{E_{jkl}},2^j \chi_{F_{jkl}}\rangle \\\notag
& \lesssim \sum_{(k,l) \in \scriptK_{\eps\eta}} \langle X_\theta \chi_{E_{kl}},g_{kl}\rangle 
\lesssim \sum_{(k,l) \in \scriptK_{\eps\eta}} |E_{kl}|^{\frac1{p_\theta}}\|g_{kl}\|_{L^{q_\theta'}(L^{r_\theta'})} \\
\notag
&\leq \bigl(\sum_{(k,l) \in \scriptK_{\eps\eta}} |E_{kl}|\bigr)^{\frac1{p_\theta}} \bigl(\sum_{(k,l) \in \scriptK_{\eps\eta}} \|g_{kl}\|_{L^{q_\theta'}(L^{r_\theta'})}^{q_\theta'}\bigr)^{\frac1{p_\theta'}} \sup_{(k,l) \in \scriptK_{\eps\eta}} \|g_{kl}\|_{L^{q_\theta'}(L^{r_\theta'})}^{1-\frac{q_\theta'}{p_\theta'}} \\
\label{E:harder bound}
&\lesssim \log(1+\eps^{-1})^{\frac3{p_\theta}}|E|^{\frac1{q_\theta}} \|g\|_{L^{q_\theta'}(L^{r_\theta'})}^{\frac{q_\theta'}{p_\theta'}}\eta^{1-\frac{q_\theta'}{p_\theta'}}
\end{align}
where for the second-to-last inequality, we have used H\"older's inequality and the fact that $q_\theta' < p_\theta'$.

We recall that $\|g\|_{L^{q_\theta'}(L^{r_\theta'})} \sim 1$, so interpolating between \eqref{E:trivial bound} and \eqref{E:harder bound}, we obtain
$$
\sum_{(k,l) \in \scriptK_{\eps\eta}} \langle X_\theta \chi_E,g_{kl}\rangle \lesssim \eps^c \eta^d |E|^{\frac1{p_\theta}}
$$
for some constants $c,d > 0$.  Summing over dyadic values of $\eps,\eta$ satisfying $0 < \eps,\eta \lesssim 1$ completes the proof.
\end{proof}

Before we begin the proof of Lemma~\ref{L:disjoint}, we record a useful lower bound.

\begin{lemma}\label{L:improved Laghi}
Let $F,F',H \subset \R^d$ have finite, positive measures.  Assume that $\Pi(F) \subset (2^{l-1},2^l]$, $\Pi(F') \subset (2^{l'-1},2^{l'}]$, 
\begin{equation} \label{E:bigger than beta}
X_{\theta_0}^*\chi_F(s,x) \gtrsim \beta,\quad \tnorm{and} \quad X_{\theta_0}^*\chi_{F'}(s,x) \gtrsim \beta', \quad \tnorm{for} \quad (s,x) \in H.
\end{equation}
Define
$$
\alpha = \frac{\langle X_{\theta_0}\chi_H,\chi_F\rangle}{|F|}.
$$
If $|l-l'| > 1$, then we have
\begin{align*}
|F'| &\gtrsim 2^{a|l-l'|}\alpha^{d-1}(\beta')^{\frac{d+1}2+\lambda}\beta^{\frac{d^2-2d+1}2 - \lambda}, \\
\end{align*}
where $a=\frac{d-1}4$ and $\lambda = \frac{d-1}{4(1+2K/d(d+1))}$.  If $|l-l'| \leq 1$, then we have
\begin{align*}
|F'| &\gtrsim \alpha^{d-1}(\beta')^d \beta^{\frac{d^2-d+2}2-d}.
\end{align*}
\end{lemma}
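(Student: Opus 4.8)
The plan is to prove Lemma~\ref{L:improved Laghi} by Christ's method of refinements, following closely the proof of Lemma~\ref{L:bound Fmj} (equivalently, Laghi's strong-type estimate at $\theta=\theta_0$), with two modifications: the iterated chain of line segments is arranged so that its \emph{final} leg lands in $F'$ rather than in $F$, and the scale information $\Pi(F)\subset(2^{l-1},2^l]$, $\Pi(F')\subset(2^{l'-1},2^{l'}]$ is used to estimate the differences of the $t$-coordinates that occur along the chain.

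\textbf{Refinements.} First I would carry out the nested stopping-time and pigeonhole refinements exactly as in the proof of Lemma~\ref{L:mixed even}, producing (for $d=2D$ even; the case $d=2D+1$ is entirely analogous, using $\Phi^d_{(s_0,x_0)}$ with $(s_0,x_0)$ a refined point of $H$) nested subsets $F=F_0\supseteq\cdots\supseteq F_D$ and $H=H_0\supseteq\cdots\supseteq H_{D-1}$ driven by $\alpha$ and $\beta$: from a point of $F_k$ one reaches a set of $s$-values of measure $\gtrsim\alpha\,t^{-2K\theta_0/((d+2)(d-1))}$ landing in $H_{k-1}$, from a point of $H_k$ one reaches a set of $t$-values of $\mu_{\theta_0}$-measure $\gtrsim\beta$ landing in $F_k$, and, after deleting suitable short intervals $J_t$, the $t$-coordinates occurring along the chain are pairwise $\mu_{\theta_0}$-separated. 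The key additional observation is that $X^*_{\theta_0}\chi_{F'}\gtrsim\beta'$ holds on \emph{all} of $H$ by hypothesis, hence on every $H_k$; so from any point of $H_k$ one \emph{also} reaches a set of $t$-values of $\mu_{\theta_0}$-measure $\gtrsim\beta'$ landing in $F'$, which we use for the last leg.

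\textbf{The iterated map and B\'ezout.} Starting from a base point $(t_0,y_0)\in F_D$ and alternating legs through $H$ and $F$ as in Lemma~\ref{L:mixed even}, except that the final $t$-slot $t_D$ is required to land in $F'$, I obtain a set $\Omega_d\subset\R^d$ with $\Psi^d_{(t_0,y_0)}(\Omega_d)\subset F'$, a weighted volume lower bound of the form $\int_{\Omega_d}(\text{the weights of }\eqref{E:omegad volume even})\gtrsim\alpha^{D}\beta^{D-1}\beta'$, and, on $\Omega_d$, the separations $|t_i-t_j|\gtrsim\beta\,(t_it_j)^{-K\theta_0/((d+2)(d-1))}$ for $0\le i<j\le D-1$ together with, for each cross-pair $(i,D)$ with $i<D$, the \emph{two} competing lower bounds
\[
|t_D-t_i|\ \gtrsim\ \beta'\,(t_Dt_i)^{-K\theta_0/((d+2)(d-1))}\qquad\text{and}\qquad|t_D-t_i|\ \sim\ \max(2^l,2^{l'})\ \ \text{when }|l-l'|>1,
\]
the second because $t_i\sim 2^l$ while $t_D\sim 2^{l'}$. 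Since $\Psi^d_{(t_0,y_0)}$ is a polynomial, B\'ezout's theorem gives $|F'|\ge|\Psi^d_{(t_0,y_0)}(\Omega_d)|\gtrsim\int_{\Omega_d}|\det D\Psi^d_{(t_0,y_0)}|$, into which I insert the Jacobian lower bound of Proposition~\ref{P:jacobian bounds} and the bounds just listed.

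\textbf{Bookkeeping, and the main obstacle.} The remaining work is the arithmetic, which is where the two regimes and the constants $\lambda,a$ arise. When $|l-l'|\le1$ the scales are comparable, one uses only the first cross-pair bound, and counting the powers of the factors $|t_D-t_i|$ that appear in Proposition~\ref{P:jacobian bounds} together with the single $\beta'$ coming from $\Omega_d$ shows that exactly $d$ of the $\beta$-factors of the Lemma~\ref{L:bound Fmj} estimate become $\beta'$-factors, yielding $|F'|\gtrsim\alpha^{d-1}(\beta')^d\beta^{(d^2-d+2)/2-d}$ after the same absorption of the weight factors $|t_i|^{2K\theta_0/((d+2)(d-1))}$ and of the relation $(\sup J)^{2K\theta_0/((d+2)(d-1))}|J|\sim\mu_{\theta_0}(J)$ as in the passage after \eqref{E:omegad volume even} (here $q_{\theta_0}=r_{\theta_0}$, so no mixed-norm factor intervenes). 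When $|l-l'|>1$ I would instead bound each cross-pair by the geometric mean $(\beta'(t_Dt_i)^{-K\theta_0/((d+2)(d-1))})^{1-\sigma}(\max(2^l,2^{l'}))^{\sigma}$ of the two bounds, write $\max(2^l,2^{l'})=(2^l2^{l'})^{1/2}2^{|l-l'|/2}$, and absorb the $(2^l2^{l'})$-powers using the weight factors and the scale relation above; the split $\sigma$ is chosen to optimize, and it is the conversion between $\mu_{\theta_0}$-mass and number of dyadic scales encoded in $\delta_{K\theta_0}=(1+\tfrac{2K}{d(d+1)})^{-1}$ that makes the exponents come out to exactly $\lambda=\tfrac{d-1}{4}\,\delta_{K\theta_0}$ on the part of $\beta'$ traded into the $\beta$-count and $a=\tfrac{d-1}{4}$ on the surviving power of $2^{|l-l'|/2}$, turning the $\beta'$-exponent $d$ into $\tfrac{d+1}{2}+\lambda$ and the $\beta$-exponent $(d^2-3d+2)/2$ into $(d^2-2d+1)/2-\lambda$. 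The main obstacle is precisely this simultaneous accounting of the powers of $\alpha,\beta,\beta'$, the weight factors, and the two scale constraints; no genuinely new idea beyond the scale-separation trick is required, but it must be carried out carefully, and one must also verify (as in the construction of the $J_t$ in Lemma~\ref{L:mixed even}) that the $\mu_{\theta_0}$-mass deleted to enforce the separations along the $F'$-leg is $\ll\beta'$, which is exactly where it matters that $X^*_{\theta_0}\chi_{F'}\gtrsim\beta'$ on \emph{all} of $H$ rather than only on some $F$-dependent refinement.
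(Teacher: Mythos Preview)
Your proposal is correct and follows essentially the same approach as the paper: a refinement chain whose last $t$-leg lands in $F'$, B\'ezout plus the Jacobian bound of Proposition~\ref{P:jacobian bounds}, and the scale separation $|t_{\text{last}}-t_i|\sim 2^{\max(l,l')}$ when $|l-l'|>1$. The paper gives details for $d$ odd (starting the chain in $H$ via $\Phi^d$) and, rather than introducing and optimizing a parameter $\sigma$, simply bounds every cross-pair by $2^{\max(l,l')}$ and then reconverts the resulting factor $2^{(d-1)\max(l,l')}$ into $\beta,\beta'$-powers via $2^l\gtrsim\beta^{\delta_{K\theta_0}}$, $2^{l'}\gtrsim(\beta')^{\delta_{K\theta_0}}$, and $2^{\max(l,l')}\gtrsim 2^{|l-l'|/2}(\beta\beta')^{\delta_{K\theta_0}/2}$; your optimization and this reconversion are algebraically equivalent. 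One small correction: only the basic refinements of \cite{CCC, Laghi} are needed here---the stopping-time intervals $I_{(s,x)}$ from Lemma~\ref{L:mixed even} play no role in this lemma.
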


\begin{proof}[Proof of Lemma~\ref{L:improved Laghi}]  
We give the details when $d=2D+1$, the case when $d=2D$ being similar.  (The reader may find the proof of Lemma~\ref{L:bound Fmj} helpful in making these modifications.)  

The proof is based on the method of refinements.  We begin by observing that there exists a point $(s_0,x_0) \in E$ and a set $\Omega_d \subset \R^d$ such that $\Phi^d_{(s_0,x_0)}(\Omega_d) \subset F'$ and 
\begin{align} \label{E:83}
\int_{\Omega_d} (2^l)^{\frac{2K(d-1)}{d(d+1)}}(2^{l'})^{\frac{2K}{d(d+1)}}\, dt_{D+1}\, ds_D\, dt_D\, \cdots\, ds_1\, dt_1 \gtrsim \alpha^D\beta^D\beta'.
\end{align}
Furthermore, if $(t_1,s_1,\dots,t_D,s_D,t_{D+1}) \in \Omega_d$, then $t_1,\dots,t_D \in (2^{l-1},2^l]$, $t_{D+1} \in (2^{l'-1},2^{l'}]$, and
\begin{align}
\label{E:84}
|s_i-s_{i-1}| \gtrsim (2^{-l})^{\frac{2K}{d(d+1)}}\alpha, \quad 1 \leq i \leq D\\
\label{E:85}
|t_i-t_j| \gtrsim (2^{-l})^{\frac{2K}{d(d+1)}}\beta, \quad 1 \leq i < j \leq D\\
\label{E:86}
|t_{D+1}-t_i| \gtrsim (2^{-l'})^{\frac{2K}{d(d+1)}}\beta', \quad 1 \leq i \leq D.
\end{align}
The proof of this observation is similar to that of Lemma~\ref{L:mixed odd} and also to one step in the proof of Lemma~2 in \cite{Laghi}, so we omit the details.

We consider first the case then $|l-l'| > 1$.  In this case we have in addition the lower bound 
\begin{equation} \label{E:87}
|t_{D+1}-t_1| \gtrsim 2^{\max\{l,l'\}},
\end{equation}
by purely geometric considerations.  Using Bezout's theorem and Proposition~\ref{P:jacobian bounds}, we have
\begin{align*}
|F'| &\geq |\Phi^d_{(s_0,x_0)}(\Omega_d)| \gtrsim \int_{\Omega_d}|\det D\Phi^d_{(s_0,x_0)}(\Omega_d)|\, dt_{D+1}\, ds_D\, \cdots\, ds_1\, dt_1 \\
&\gtrsim \int_{\Omega_d}\prod_{i=1}^D \bigl\{|s_i-s_{i-1}||t_i|^{\frac{2K}d}\prod_{\stackrel{1 \leq j \leq D+1}{j \neq i}}|t_j-t_i|^2\bigr\}|t_{D+1}|^{\frac{K}d} \, dt_{D+1}\, ds_D\, \cdots\, ds_1\, dt_1.
\end{align*}
Next, using the lower bounds \eqref{E:84}, \eqref{E:85}, and \eqref{E:87}, followed by \eqref{E:83}, we have
\begin{align*}
&|F'| \gtrsim \int_{\Omega_d} (2^l)^{-\frac{2KD}{d(d+1)}-\frac{4KD(D-1)}{d(d+1)} + \frac{2KD}d}(2^{l'})^{\frac{K}d}2^{2D\max\{l,l'\}}\alpha^D\beta^{2D(D-1)}\, dt_{D+1}\, \cdots\, dt_1 \\
&\gtrsim (2^l)^{-\frac{2KD}{d(d+1)}+\frac{2KD}d - \frac{4KD(D-1)}{d(d+1)} - \frac{2K(d-1)}{d(d+1)}}(2^{l'})^{\frac{K}d - \frac{2K}{d(d+1)}}2^{2D\max\{l,l'\}}\alpha^{2D}\beta^{2D(D-1)+D}\beta'.
\end{align*}
Thus, after some arithmetic, we see that 
\begin{equation} \label{E:88}
|F'| \gtrsim (2^l)^{\frac{K(d-1)}{d(d+1)}}(2^{l'})^{\frac{K(d-1)}{d(d+1)}}2^{(d-1)\max\{l,l'\}}\alpha^{d-1}\beta^{\frac{(d-1)(d-2)}2}\beta'.
\end{equation}
Since $\mu_{\theta_0}((2^{k-1},2^k]) \sim 2^{k(1+\frac{2K}{d(d+1)})}$ for $k \in \Z$, we have by \eqref{E:bigger than beta} that 
$$
\beta \lesssim 2^{l(1+\frac{2K}{d(d+1)})} \qquad \beta' \lesssim 2^{l'(1+\frac{2K}{d(d+1)})}.
$$
Thus 
$$
2^{\max\{l,l'\}} \gtrsim 2^{\frac12|l-l'|}(\sqrt{\beta\beta'})^{\frac1{1+2K/d(d+1)}},
$$
as can be seen by (for instance) treating the cases $l' > l$ and $l > l'$ separately.  Plugging these two pieces of information into \eqref{E:88}, we obtain
\begin{align*}
|F'| &\gtrsim \beta^{\frac{K(d-1)}{d(d+1)}\cdot\frac1{1+2K/d(d+1)}+\frac{(d-1)(d-2)}2}(\beta')^{(\frac{K(d-1)}{d(d+1)}+\frac{d-1}2)(\frac1{1+2K/d(d+1)})+1}2^{\frac{d-1}2\max\{l,l'\}}\alpha^{d-1}\\
&\gtrsim 2^{\frac{d-1}4|l-l'|}\beta^{\frac{(d-1)^2}2-\lambda}(\beta')^{\frac{d+1}2+\lambda}\alpha^{d-1},
\end{align*}
which is what we were trying to prove.

Now we turn to the case when $|l-l'| \leq 1$, which is a bit simpler since $2^l \sim 2^{l'}$.  In this case, we use (\ref{E:84}-\ref{E:86}), then \eqref{E:83}, and some algebra (the $2^l$ factors all cancel out) to obtain
\begin{align*}
|F'| &\gtrsim \int_{\Omega_d} (2^l)^{-\frac{2KD}{d(d+1)} - \frac{4KD^2}{d(d+1)} + K} \alpha^D\beta^{2D(D-1)}(\beta')^D\, dt_{D+1}\, \cdots\, ds_1\, dt_1\\
&\gtrsim \alpha^{d-1}(\beta')^d \beta^{\frac{d^2-d+2}2 - d},
\end{align*}
completing the proof of the lemma.
\end{proof}

Finally we move to the proof of Lemma~\ref{L:disjoint}, the last step remaining in the proof of Theorem~\ref{T:main}.

\begin{proof}[Proof of Lemma~\ref{L:disjoint}]
It suffices to prove that 
$$
\sum_{(k,l) \in \scriptK_{\eps\eta}} \sum_{j \in \scriptJ_{kl}} |E_{jkl}| \lesssim (\log(1+\eps^{-1}))^3 |E|.
$$
Furthermore, since $\#\scriptJ_{kl} \lesssim \log(1+\eps^{-1})$ for each $(k,l) \in \scriptK_{\eps\eta}$, it suffices to prove that
\begin{equation} \label{E:sep disj}
\sum_{(k,l) \in \scriptK_{\eps\eta}} \sum_{j \in \scriptJ_{kl}} |E_{jkl}| \lesssim |E|
\end{equation}
under the additional hypotheses that
\begin{equation} \label{E:extra hyp}
\text{$\scriptK_{\eps\eta}$ is $C'\log(1+\eps^{-1})$-separated and $\#\scriptJ_{kl}=1$ for all $(k,l) \in \scriptK_{\eps\eta}$,}
\end{equation}
where $C'$ is some large constant to be determined later on.

The argument we use originated in \cite{ChQex} and was used in \cite{Laghi} in a related context.

We begin by noting that by Cauchy--Schwartz and some elementary manipulations, we have
\begin{align*}
\bigl(\sum_{(k,l)} |E_{jkl}|\bigr)^2 &= \bigl(\int_E \sum_{(k,l)} \chi_{E_{jkl}}\bigr)^2 \leq |E|\int_E \sum_{(k,l),(k',l')}\chi_{E_{jkl}}\chi_{E_{j'k'l'}} \\&= |E| \bigl(\sum_{(k,l)} |E_{jkl}| + \sum_{(k',l') \neq (k,l)} |E_{jkl}\cap E_{j'k'l'}|\bigr),
\end{align*}
where $(k,l) \in \scriptK_{\eps\eta}$ and $\{j\} = \scriptJ_{kl}$ is understood whenever the subscript $jkl$ appears (likewise for $(j',k',l')$).  Thus failure of \eqref{E:sep disj} implies that 
\begin{equation}\label{E:if not disj}
\bigl(\sum_{(k,l)} |E_{jkl}|\bigr)^2 \lesssim |E| \sum_{(k,l) \neq (k',l')}|E_{jkl}\cap E_{j'k'l'}| \leq |E| (\#\scriptK_{\eps\eta})^2 \sup_{(k,l) \neq (k',l')} |E_{jkl}\cap E_{j'k'l'}|.
\end{equation}

Now let $(k,l) \in \scriptK_{\eps\eta}$.  Since $\Pi(F_{jkl}) \subset (2^{l-1},2^l]$, the weight $t^{\frac{2K\theta}{(d+2)(d-1)}}$ is essentially constant, so  \eqref{E:Ejkl sim E} is equivalent to 
\begin{equation} \label{E:Ejkl sim E 2}
\langle X_{\theta_0} \chi_E,\chi_{F_{jkl}}\rangle \sim \langle X_{\theta_0} \chi_{E_{jkl}},F_{jkl}\rangle.
\end{equation}
So by \eqref{E:Fjkl qex}, \eqref{E:Ejkl sim E 2}, and Proposition~\ref{P:rwt}, we have
$$
\begin{aligned}
&\eps^C |E|^{\frac1{p_{\theta_0}}}\|\chi_{F_{jkl}}\|_{L^{q_{\theta_0}'}(L^{r_{\theta_0}'})} \lesssim \langle X_{\theta_0}\chi_E,\chi_{F_{jkl}}\rangle\\
&\qquad \sim \langle X_{\theta_0}\chi_{E_{jkl}},\chi_{F_{jkl}}\rangle \lesssim |E_{jkl}|^{\frac1{p_{\theta_0}}}\|\chi_{F_{jkl}}\|_{L^{q_{\theta_0}'}(L^{r_{\theta_0}'})},
\end{aligned}
$$
and so $\eps^{p_{\theta_0}C}\#\scriptK_{\eps\eta}|E| \lesssim \sum_k |E_{jkl}|$.  Plugging this into \eqref{E:if not disj}, we see that if \eqref{E:sep disj} fails, then there exist distinct ordered pairs $(k,l),(k',l') \in \scriptK_{\eps\eta}$ such that 
\begin{equation} \label{E:bad thing}
\eps^{2p_{\theta_0}C} |E| \lesssim |E_{jkl}\cap E_{j'k'l'}|.
\end{equation}

We assume \eqref{E:bad thing} and will derive a contradiction by applying Lemma~\ref{L:improved Laghi} with $H = E_{jkl} \cap E_{j'k'l'}$, $F = F_{jkl}$, $F' = F_{j'k'l'}$.  By \eqref{E:Fjkl qex}, the definition of $E_{jkl}$, and the fact that $\Pi(F_{jkl}) \subset (2^{l-1},2^l]$, we have for $(s,x) \in H$ that 
$$
X_{\theta_0}^*\chi_F(s,x) \gtrsim \frac{\langle X_{\theta_0}\chi_E,\chi_{F_{jkl}}\rangle}{|E|} \gtrsim \eps^C|E|^{-\frac1{p_{\theta_0}'}}|F_{jkl}|^{\frac1{q_{\theta_0}'}} =: \beta
$$
and similarly that
$$
X_{\theta_0}^*\chi_{F'}(s,x) \gtrsim \eps^C|E|^{-\frac1{p_{\theta_0}'}}|F_{j'k'l'}|^{\frac1{q_{\theta_0}'}} =: \beta'.
$$
Using these bounds and \eqref{E:bad thing}, we have
\begin{align*}
\alpha &:= \frac{\langle X_{\theta_0}\chi_H,\chi_F\rangle}{|F|} \gtrsim \frac{\beta|H|}{|F|} \gtrsim \eps^{C(1+p_{\theta_0})}|E|^{\frac1{p_{\theta_0}}}|F_{jkl}|^{-\frac1{q_{\theta_0}}}.\\
\alpha' &:= \frac{\langle X_{\theta_0}\chi_H,\chi_{F'}\rangle}{|F'|} \gtrsim \eps^{C(1+p_{\theta_0})}|E|^{\frac1{p_{\theta_0}}}|F_{j'k'l'}|^{-\frac1{q_{\theta_0}}}.
\end{align*}
By Lemma~\ref{L:improved Laghi} and a bit of algebra, we have (regardless of the separation between $l$ and $l'$)
\begin{equation} \label{E:Fjkl big}
|F_{jkl}| \gtrsim \eps^{AC} 2^{a|l-l'|}|F_{j'k'l'}|,
\end{equation}
for some constants $A,a > 0$.  By symmetry, we have in addition that
\begin{equation} \label{E:Fj'k'l' big}
|F_{j'k'l'}| \gtrsim \eps^{AC} 2^{a|l-l'|}|F_{jkl}|.
\end{equation}
Obviously, this implies that $|l-l'| \leq C\log(1+\eps^{-1})$.  Thus if $C'$ is sufficiently large, our hypothesis \eqref{E:extra hyp} implies that $|k-k'| > \frac{C'}2 \log(1+\eps^{-1})$.  But by \eqref{E:Fjkl same size}, we have
\begin{gather*}
\eps^C2^{l\frac{2K}{d(d+1)}}(2^{-k}\eta)^{q_\theta'}|E|^{\frac{2(d-1)}{d(d+1)}} \lesssim |F_{jkl}|^{\frac1{q_{\theta_0}'}} \lesssim \eps^{-C}2^{l\frac{2K}{d(d+1)}}(2^{-k}\eta)^{q_\theta'}|E|^{\frac{2(d-1)}{d(d+1)}}\\
\eps^C2^{l\frac{2K}{d(d+1)}}(2^{-k'}\eta)^{q_\theta'}|E|^{\frac{2(d-1)}{d(d+1)}} \lesssim |F_{j'k'l'}|^{\frac1{q_{\theta_0}'}} \lesssim \eps^{-C}2^{l\frac{2K}{d(d+1)}}(2^{-k'}\eta)^{q_\theta'}|E|^{\frac{2(d-1)}{d(d+1)}},
\end{gather*}
where we have used the bound $|l-l'| \lesssim \log(1+\eps^{-1})$ to eliminate $l'$.  These bounds are incompatible with \eqref{E:Fjkl big}, \eqref{E:Fj'k'l' big} and the fact that $|k-k'| > \frac{C'}2 \log(1+\eps^{-1})$ (for $C'$ sufficiently large).  This completes the proof of the lemma.
\end{proof}


\end{document}